\documentclass[12pt]{amsart}
\usepackage{amsfonts}
\usepackage{}
\usepackage{amsmath,amssymb,amsbsy,amsfonts,amsthm,latexsym,
                        amsopn,amstext,amsxtra,euscript,amscd,mathrsfs,color,bm,cite}
\usepackage{float}
\usepackage[english]{babel}
\usepackage{mathtools}
\usepackage{todonotes}
\usepackage{url}
\usepackage[colorlinks,linkcolor=blue,anchorcolor=blue,citecolor=blue,backref=page]{hyperref}
\usepackage{cases}
\usepackage{txfonts}
\textwidth=15cm \textheight=22cm \topmargin=0.5cm
\oddsidemargin=0.5cm \evensidemargin=0.5cm \pagestyle{plain}

\usepackage{verbatim}
\usepackage{subfigure}
\usepackage{diagbox}
\usepackage{array}
\usepackage{bm}

%


%
%

%
%

%
\def\opn#1#2{\def#1{\operatorname{#2}}} 
\opn\chara{char} \opn\length{\ell}
\opn\projdim{proj\,dim} \opn\injdim{inj\,dim} \opn\rank{rank}
\opn\depth{depth} \opn\grade{grade} \opn\height{height}
\opn\embdim{emb\,dim} \opn\codim{codim}

\opn\Tr{Tr} \opn\bigrank{big\,rank}
\opn\superheight{superheight}\opn\lcm{lcm}
\opn\trdeg{tr\,deg}%
\opn\reg{reg} \opn\lreg{lreg}
%
\opn\div{div} \opn\Div{Div} \opn\cl{cl} \opn\Cl{Cl}
%
%
\opn\Spec{Spec} \opn\Supp{Supp} \opn\supp{supp} \opn\Sing{Sing}
\opn\Ass{Ass}
%
%
\opn\Ann{Ann} \opn\Rad{Rad} \opn\Soc{Soc}
%
%
\opn\Ker{Ker} \opn\Coker{Coker} \opn\Im{Im} \opn\Hom{Hom}
\opn\Tor{Tor} \opn\Ext{Ext} \opn\End{End} \opn\Aut{Aut} \opn\id{id}

\opn\nat{nat}
\opn\pff{pf}
\opn\Pf{Pf} \opn\GL{GL} \opn\SL{SL} \opn\mod{mod} \opn\ord{ord}
%
%
\opn\aff{aff} \opn\con{conv} \opn\relint{relint} \opn\st{st}
\opn\lk{lk} \opn\cn{cn} \opn\core{core} \opn\vol{vol}
\opn\gr{gr}

%
%

\def\pot#1#2{#1[\kern-0.28ex[#2]\kern-0.28ex]}

%
%
\opn\dirlim{\underrightarrow{\lim}}
\opn\invlim{\underleftarrow{\lim}}
%
%
%

%
%
%
%
\theoremstyle{definition}

\newtheorem*{notation}{Notation}

\theoremstyle{plain}
\newtheorem{theorem}{Theorem}

\newtheorem{lemma}[theorem]{Lemma}

\numberwithin{equation}{section}
\numberwithin{theorem}{section}
\newtheorem{proposition}[theorem]{Proposition}
\newtheorem*{cond}{Condition A}
%
%
%
\textwidth=15cm \textheight=22cm \topmargin=0.5cm
\oddsidemargin=0.5cm \evensidemargin=0.5cm \pagestyle{plain}
%
%
\def\qed{\ifhmode\textqed\fi
   \ifmmode\ifinner\quad\qedsymbol\else\dispqed\fi\fi}
\def\textqed{\unskip\nobreak\penalty50
    \hskip2em\hbox{}\nobreak\hfil\qedsymbol
    \parfillskip=0pt \finalhyphendemerits=0}
\def\dispqed{\rlap{\qquad\qedsymbol}}

%
%
%

\def\mA{\mathcal{A}}
\def\mV{\mathcal{V}}

\def\mG{\mathcal{G}}
\def\mH{\mathcal{H}}

\def\mD{\mathcal{D}}

\let\ve=\varepsilon

\let\ol=\overline

\begin{document}
\title{B\MakeLowercase{inomial pmf among arithmetic progressions and sieved sets in random walks}}
\author{Jun Hong, Xiaosheng Wu, and Shixin Zhu}
\date{}
\address {School of Mathematics, Hefei University of Technology, Hefei 230009,
P. R. China.}
\email {junhong@mail.hfut.edu.cn}
\address {School of Mathematics, Hefei University of Technology, Hefei 230009,
P. R. China.}
\email {xswu@amss.ac.cn}
\address {School of Mathematics, Hefei University of Technology, Hefei 230009,
P. R. China.}
\email {zhushixin@hfut.edu.cn}

\subjclass[2010]{11N35, 60G50 }
\keywords{random walk; binomial distribution; sieved set; square-free number.}

\begin{abstract}
We consider the distribution of the binomial probability mass function (\emph{pmf}) among arithmetic progressions and obtain an average-type theorem. As applications, we consider the possible visits to a kind of sieved sets of integers or lattice points, by an $\alpha$-random walker. We show that, almost surely, the asymptotic proportion of time the random walker in a sieved set of the type is equal to the density of the set, independently of $\alpha$.
\end{abstract}
\maketitle

\section{Introduction}
Random walks are widely concerned recently, and similar phenomena in number theory are also studied. For example, Lifshits and Weber \cite{LW09} and Srichan \cite{Sri15} considered the Lindel\"of hypothesis with the Cauchy random walk; Jouve \cite{Jou10} studied the large sieve method with random walks on cosets of an arithmetic group; McNew \cite{Mcn17} considered random walks on the residues modulo $n$.

In 2019, Cilleruelo, Fern\'andez, and Fern\'andez \cite{CFF19} considered visible lattice points in $\alpha$-random walks. They proved that, almost surely, the asymptotic proportion of time the random walker at visible lattice points is equal to the density of visible points, independently of $\alpha$. Their work relies on the second-moment method in probability, as well as the following asymptotic formula for the sum of the binomial probability mass function (\emph{pmf}) on an arithmetic sequence:
	\begin{align}\label{eq1}
		\left\lvert \sum_{l\equiv v (\bmod a)} \binom nl \alpha^l (1-\alpha)^{n-l}- \frac 1a \right\rvert = O \left(n^{-\frac12}\right).
	\end{align}
Frequently, we need an estimate of the following type,
\begin{align}\label{eq2}
		\sum_{a\in \mathcal{A}}\left\lvert \sum_{l\equiv v (\bmod a)} \binom nl \alpha^l (1-\alpha)^{n-l}- \frac 1a \right\rvert = O \left(\log^{-A}n\right)
	\end{align}
with $\mathcal{A}$ being a set as large as possible, which represents the distribution of the binomial \emph{pmf} among arithmetic progressions.

In \cite{CFF19}, Cilleruelo et al. applied \eqref{eq2} with $\mathcal{A}$ being the set of all factors of $n$, and the asymptotic formula \eqref{eq1} allows for \eqref{eq2} with $|\mathcal{A}|\ll n^{\frac12}\log^{-A}n$.
In this work, we deduce an average-type theorem for \eqref{eq2}, which allows for $|\mA|\ll n\log^{-A}n$. As applications, we can consider a kind of sieved set among positive integers or lattice points in $\alpha$-random walks. We find that the asymptotic proportion of time the random walker in such a sieved set is equal to the density of the set among positive integers or lattice points, independently of $\alpha$. That is to say, the densities of the sieved sets do not alter in $\alpha$-random walks.

\begin{notation}
We highlight here some notations, which we will often use in this work.
For a set $\mA$, we write $|\mA|$ for its cardinality, $\mA(x)=\mA\cap[1,x]$, and $1_{\mA}(n)$ for its indicative function, which equals to $1$ if $n\in\mA$ and vanishes otherwise. In particular, we write $1_x(n)=1_{[1,x]}(n)$ for convenience. We also present some well-known arithmetic functions: $\tau_k(n)$ means the divisor function; $\lfloor x \rfloor$ denotes the largest integer not exceeding $x$. We write $(m,n)$ for the greatest common divisor of $m$ and $n$, but sometimes, it also means a lattice point, and it is easy to identify them in the work. As a convention, we apply $A$ for a sufficiently large positive constant and $\ve$ for a sufficiently small positive constant, which may vary from line to line.
\end{notation}

\subsection{The distribution of the binomial \emph{pmf} among arithmetic progressions}
We consider the average distribution of the binomial \emph{pmf} among arithmetic progressions, while the moduli take values in a set, which is not too dense among positive integers and meets the following condition.
\begin{cond}Let $\theta\in (0,1]$ be a given constant. For $x>0$, we write $\mA_x$ for a set of positive integers meeting:
\begin{itemize}
\item $a\ll x^A$, for any $a\in \mA_x$;
\item $|\mA_x\cap[1,y]|\ll \frac{y^{\theta}}{\log^A y}$, for any $y\gg x^\ve$.
\end{itemize}
\end{cond}
We have the following theorem for the average distribution of the binomial \emph{pmf} among arithmetic progressions.
\begin{theorem}\label{thmone}
Suppose by $\mA_x$ a set meeting Condition A with $\theta\in(0,1]$. For any $\alpha\in (0,1)$, it holds uniformly in $x'\in (x^\ve,x)$ and $h\in (-Ax,\alpha' x'/2)$ that
	\begin{align}\label{eqaB}
		\frac1x\sum_{x'\le n\le x}\sum_{a\in \mA_x}\Bigg| \sum_{l\equiv v (\bmod a)} \binom nl \alpha^l (1-\alpha)^{n-l}- \frac 1{a} \Bigg|=O\left(x^{-\frac12(1-\theta)}(\log x)^{-A}\right),
	\end{align}
where $\alpha'=\alpha$ for $v=h$ and $\alpha'=1-\alpha$ for  $v=n-h$.
\end{theorem}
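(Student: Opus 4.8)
The plan is to begin from the finite Fourier (additive character) expansion of the congruence condition. Writing $e(t)=e^{2\pi i t}$ and detecting $l\equiv v\,(\mathrm{mod}\ a)$ via $\frac1a\sum_{j=0}^{a-1}e(j(l-v)/a)$, the binomial theorem collapses the inner sum over $l$ and yields the closed form
\[
\sum_{l\equiv v\,(\mathrm{mod}\ a)}\binom{n}{l}\alpha^{l}(1-\alpha)^{n-l}=\frac1a\sum_{j=0}^{a-1}e\!\left(\frac{-jv}{a}\right)\beta_{j}^{\,n},\qquad \beta_{j}:=\alpha\, e\!\left(\frac{j}{a}\right)+(1-\alpha).
\]
The term $j=0$ equals $1/a$, so it cancels the subtracted main term and the quantity to be estimated is $\frac1a\sum_{j=1}^{a-1}e(-jv/a)\beta_j^{\,n}$. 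Two identities drive everything: $|\beta_j|^2=1-4\alpha(1-\alpha)\sin^2(\pi j/a)$ and $|1-\beta_j|=2\alpha|\sin(\pi j/a)|$. In the case $v=n-h$ I would first absorb the $n$-dependence of the phase into the power using $e(-jn/a)\beta_j^{\,n}=(\alpha+(1-\alpha)e(-j/a))^{n}$; the new base has the same modulus but $|1-\gamma_j|=2(1-\alpha)|\sin(\pi j/a)|$, which is exactly the source of the switch $\alpha\to\alpha'=1-\alpha$. This reduces both cases to a single expression with a fixed phase $e(\mp jh/a)$.

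Next I would split the range of $a$ (equivalently of the frequencies $j$) according to the size of $a$ relative to the standard deviation $\sigma:=\sqrt{\alpha(1-\alpha)n}\asymp\sqrt{x}$. For small moduli, say $a\le \sqrt{x}/\log^{B}x$, the factor $|\beta_j|^{n}\le\exp(-2\alpha(1-\alpha)n\sin^2(\pi j/a))\le\exp(-cnj^2/a^2)$ already forces super-polynomial decay even for $j=1$, so these $a$ contribute a negligible amount after summing over $n$ and over $\mA_x$. All the substance therefore lives in the complementary range $a\gg\sqrt x$.

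For $a\gg\sqrt x$ the pointwise bound \eqref{eq1} gives only $O(x^{-1/2})$ for each $n$, with no decay in $a$; since $\mA_x$ may contain $\gg x^{\theta}$ elements, summing this is hopeless and the entire gain must come from averaging in $n$. Here I would prove the per-modulus bound $\frac1x\sum_{x'\le n\le x}|\,\cdots\,|\ll (\log x)^{O(1)}/a$. The cleanest route is a local-central-limit/Poisson-summation analysis: approximate the binomial pmf by the Gaussian $\frac1{\sqrt{2\pi}\,\sigma}\exp(-(l-\alpha n)^2/2\sigma^2)$ in the bulk and control the tails by a Chernoff large-deviation estimate. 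For $a\gg\sigma$ the progression $l\equiv v\,(\mathrm{mod}\ a)$ meets the bulk in at most one point, so the error is essentially $\frac1{\sqrt{2\pi}\,\sigma}\exp(-d_n^2/2\sigma^2)$ with $d_n$ the distance from $\alpha n$ to the progression; bounding this by the full Gaussian sum, swapping the order of summation, and counting the points $l\equiv v\,(\mathrm{mod}\ a)$ in $[\alpha x',\alpha x]$ yields the desired $O(1/a)$. The main obstacle is precisely this step: one must extract genuine cancellation from the oscillation in $n$, equivalently from the sign-sensitive off-diagonal in the second moment $\sum_n|\cdots|^2$, which nearly cancels the diagonal. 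A crude triangle inequality in $j$, or a naive Cauchy--Schwarz in $n$, throws this cancellation away and is too lossy, so the $n$-average must be handled by the Poisson/swap mechanism above. For the largest moduli $a\gtrsim x$ the progression meets $[0,n]$ in at most one point, which lies in the exponential tail precisely because the hypothesis $h<\alpha' x'/2$ keeps $v$ a definite distance from the mean $\alpha n$; the large-deviation bound then makes the binomial term super-polynomially small, leaving only the $1/a$ from the main term.

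Finally I would assemble the pieces using Condition A and partial summation. On the critical scale $a\asymp\sqrt x$ the per-modulus bound is $O(x^{-1/2})$ while $|\mA_x\cap[1,\sqrt x]|\ll x^{\theta/2}/\log^A x$, which already produces the announced size $x^{-(1-\theta)/2}(\log x)^{-A}$. For $a\gg\sqrt x$ the bound $O(1/a)$ is summed against the counting function $|\mA_x\cap[1,y]|\ll y^{\theta}/\log^A y$; since $\int_{\sqrt x} y^{-1}\,d\!\left(y^{\theta}/\log^A y\right)$ has the correct power $x^{-(1-\theta)/2}$ of $x$ for all $\theta\in(0,1]$, every dyadic range beyond $\sqrt x$ stays within budget, and the spare powers of $\log$ in Condition A absorb both the polylogarithmic losses from the previous paragraph and the single logarithm lost at the endpoint $\theta=1$. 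Combining the three ranges gives \eqref{eqaB} uniformly in the stated ranges of $x'$ and $h$.
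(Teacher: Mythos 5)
Your central mechanism for the dominant range --- concentration of the binomial \emph{pmf} in an interval of length $\asymp\sqrt n\log n$ about $\alpha n$, the uniform bound $n^{-1/2}$ there, swapping the $n$- and $l$-sums so that each progression point is ``active'' for only $O(\sqrt x\log x)$ values of $n$, and then counting progression points against Condition A by partial summation --- is exactly the paper's argument (Lemmas \ref{thmn} and \ref{lemubBone}); your reduction of $v=n-h$ to $v=h$ via $\alpha\mapsto1-\alpha$ is also the paper's. The Fourier expansion and the identities for $|\beta_j|$ are correct but end up inessential, and no cancellation in $n$ is needed anywhere: the whole estimate is positivity plus counting, so the paragraph about the ``sign-sensitive off-diagonal'' is a red herring.

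Two steps as written, however, would fail. First, the small-moduli cut at $a\le\sqrt x/\log^Bx$ is independent of $n$, while $n$ ranges down to $x'\ge x^\ve$; for $n\ll x/\log^{2B}x$ the exponent $nj^2/a^2$ need not be large even for $j=1$, so there is no decay at all. The cut must be placed at $a\ll\sqrt n$, and the surviving middle range $\sqrt n\ll a\ll\sqrt x$ then has to be handled by the pointwise bound \eqref{eq1}: summing $n^{-1/2}\,\lvert\mA_x\cap[1,\sqrt x]\rvert$ over $n\le x$ gives $x^{(1+\theta)/2}\log^{-A}x$, which is exactly the budget --- this is the paper's $\Sigma_1$ with $\delta=\tfrac12$. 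Second, the claim that for $a\gtrsim x$ the unique progression point in $[0,n]$ lies in the exponential tail because $h<\alpha'x'/2$ is false: that hypothesis controls only the point $l=v$ (the case $m=0$), whereas $v$ may be as negative as $-Ax$ and the point $l=v+a$ can land exactly at $\alpha n$ (take $v=-x/2$, $a=1.1x$, $\alpha n=0.6x$). The range $x\ll a\ll x^A$ cannot be disposed of per modulus --- there can be up to $x^{A\theta}$ moduli there, and multiplying by the per-modulus average $\log x/x$ is far too lossy in total. The correct argument is the joint count over pairs $(a,m)$ with $l=v+ma$ and $1\le m\ll x/a$, which is the paper's estimate \eqref{eqbigd}; equivalently, one observes that no bulk point exists once $a>(A+1)x$, and that only $O(x^{\theta}\log^{-A}x)$ moduli lie below that threshold, so their total contribution is $O(x^{\theta-1}\log^{1-A}x)$, which is admissible. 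Both gaps are repairable with the paper's own devices, but as stated your decomposition leaves the middle range uncovered and your treatment of the largest moduli rests on a false assertion.
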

For a vector $\bm{a}=(a_1,\dots,a_u)^T$, we write $\Pi\bm{a}$ for the product of its coordinates, that is to say,
\[
\Pi\bm{a}=\prod_{t=1}^u a_t.
\]
If $\bm{v}=(v_1,\dots,v_u)^T$ is another vector, the notation $l\equiv\bm{v} \pmod{\bm{a}}$ means $l\equiv v_t \pmod{a_t}$ for all $t=1,\dots,u$.
We also have the following theorem with multiple moduli.
\begin{theorem}\label{thmaverageB}
Suppose by $\mA^1_x,\dots, \mA^u_x$ the sets meeting Condition A with $\theta\in(0,1)$, and denote by $\mD_n$ a set consisting of $\ll n^\ve$ elements, which is uniquely determined by $n$. For any $\alpha\in (0,1)$ and any given $\nu$,
it holds uniformly in $x'\in (x^\ve,x)$ and $h_i\in (-Ax,\alpha' x'/2)$ that
	\begin{align}\label{eqaB1}
		\frac1x\sum_{x'\le n\le x}\sum_{d\in \mD_n}\mathop{\sum\nolimits'}_{a_1\in \mA^1_x,\dots,a_u\in \mA^u_x}\Bigg| \sum_{\substack{l\equiv \bm{v} (\bmod \bm{a})\\ l\equiv \nu (\bmod d)}} \binom nl \alpha^l (1-\alpha)^{n-l}- \frac 1{ d\Pi\bm{a}} \Bigg|=O\left(x^{-\frac1{2u}(1-\theta)+\ve}\right),
	\end{align}
where $\alpha'=\alpha$ for $v_i=h_i$ and $\alpha'=1-\alpha$ for  $v_i=n-h_i$, and where $\sum'$ is the sum over such $a_1,\dots, a_u$ that are coprime with each other and $d$.
\end{theorem}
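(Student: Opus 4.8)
The plan is to reduce everything to the single-modulus Theorem~\ref{thmone}, the whole difficulty being that after the coprime congruences are combined this reduction is not immediate. First I would dispose of the sum over $d$: since $|\mD_n|\ll n^\ve\ll x^\ve$ and $\mD_n$ is determined by $n$, I pull the $d$-sum outside at a cost of $x^\ve$, so that it suffices to bound the inner average over $n$ and over the coprime tuples $a_1,\dots,a_u$ uniformly in $d$ and $\nu$. Using that $a_1,\dots,a_u,d$ are pairwise coprime, the Chinese Remainder Theorem collapses the conditions $l\equiv\bm v\pmod{\bm a}$ and $l\equiv\nu\pmod d$ into a single congruence $l\equiv V\pmod{d\,\Pi\bm a}$, and $1/(d\,\Pi\bm a)$ is the matching main term. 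The obstruction is that the CRT residue $V$ is a generic residue, not of the special shape $h$ or $n-h$ that Theorem~\ref{thmone} requires, so I cannot invoke it directly on the combined modulus.

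Next I would expand each indicator in additive characters, writing $1_{l\equiv v_t(a_t)}=\tfrac1{a_t}+E_{a_t}(l)$ with $E_{a_t}(l)=\tfrac1{a_t}\sum_{1\le j_t<a_t}e^{2\pi i j_t(l-v_t)/a_t}$, and similarly for $d$. Multiplying out and subtracting $1/(d\,\Pi\bm a)$ organizes the quantity inside \eqref{eqaB1} into the $2^{u+1}-1$ pieces indexed by the nonempty subsets of $\{a_1,\dots,a_u,d\}$. The pieces in which only one modulus $a_t$ survives are exactly single-modulus defects $\sum_l P(l)E_{a_t}(l)$ with $P(l)=\binom nl\alpha^l(1-\alpha)^{n-l}$ and with the special residue $v_t\in\{h_i,n-h_i\}$; after the remaining free moduli are summed trivially (each $\sum_{a_s}a_s^{-1}$ converges since $\theta<1$), these are controlled directly by Theorem~\ref{thmone} and are in fact smaller than the claimed bound. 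The entire weight of the theorem thus falls on the genuinely coupled pieces, the decisive one being $G(n;\bm a)=\sum_l P(l)\prod_{t}E_{a_t}(l)$.

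To treat $G$ I would pass to its character expansion, in which the relevant factor is $\phi(\sigma(\bm j))^n$ with $\phi(\xi)=1-\alpha+\alpha e^{2\pi i\xi}$ and $\sigma(\bm j)=\sum_t j_t/a_t$, and estimate it in mean square over $n\in[x',x]$. Here $|\phi(\xi)|^2=1-4\alpha(1-\alpha)\sin^2(\pi\xi)$ gives geometric decay away from the resonances $\sigma(\bm j)\in\ZZ$, while pairwise coprimality of the $a_t$ forces $\|\sigma(\bm j)\|\ge 1/\Pi\bm a$ for $\bm j\neq\bm0$ and bounds, by equidistribution, the number of near-resonant tuples; this is what keeps the second moment small. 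Finally I would apply Hölder's inequality across the $u$ variables $a_1,\dots,a_u$ so as to factor the multiple average into $u$ single-variable averages, each brought into special-residue form and estimated by Theorem~\ref{thmone} at strength $x^{-\frac12(1-\theta)}$. The $u$-th root produced by Hölder is precisely what degrades the exponent to $\tfrac1{2u}(1-\theta)$, and the crude handling of the $d$-sum, of the coprimality conditions, and of the off-diagonal terms is absorbed into $x^{\ve}$ in place of the sharper $\log^{-A}x$ of Theorem~\ref{thmone}.

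The main obstacle is exactly this coupled term: because the combined residue is generic, Theorem~\ref{thmone} is unavailable as stated, and the art lies in reorganizing $G$ — through the character expansion, the mean-square bound, and the separating Hölder step — so that Theorem~\ref{thmone} becomes the engine for each factor. Within this, the delicate points are the resonance control for $\sigma(\bm j)=\sum_t j_t/a_t$ (where coprimality is indispensable) and the off-diagonal contributions to the second moment, together with the observation that the moduli range effectively only up to size $\asymp x$, since for $a_t\gg x$ the pmf meets each residue class at most once and those terms are negligible. Everything else — the $d$-reduction and the single-modulus pieces — is routine given Theorem~\ref{thmone}.
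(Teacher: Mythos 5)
Your opening moves match the paper: the $d$-sum is removed at a cost of $x^\ve$, and the observation that CRT produces a generic residue to which Theorem~\ref{thmone} does not apply is exactly the remark made after the statement of the theorem. The single-error pieces of your character decomposition are also handled correctly. The genuine gap is in the coupled term $G(n;\bm a)=\sum_l P(l)\prod_t E_{a_t}(l)$, and neither of the two devices you propose for it survives scrutiny. First, the resonance/mean-square bound over $n$ yields at best a per-tuple estimate of the shape $|G|\ll x^{-1/2}\log x+(\Pi\bm a)^{-1}$, and the first term must then be summed over all coprime tuples $\bm a$: Condition A only caps $|\mA_x^t|$ by a power of $x$ (elements may be as large as $x^A$, and even after discarding $a_t\gg x$ there can be $\asymp x^{u\theta}$ tuples), so $x^{-1/2}$ times the number of tuples overwhelms the target bound. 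Second, the H\"older step cannot ``bring each factor into special-residue form'': any H\"older inequality in $l$ forces the absolute value inside the $l$-sum, and $\sum_l P(l)\,|E_{a_t}(l)|\asymp 2/a_t$ carries no decay in $n$ whatsoever --- it is precisely the cancellation between $\sum_{l\equiv v_t}P(l)$ and $1/a_t$ that Theorem~\ref{thmone} quantifies, and taking absolute values pointwise destroys it; while H\"older across the $\bm a$-sum instead leaves the other $u-1$ moduli summed freely, which diverges. So the mechanism you assign to the exponent $\tfrac1{2u}(1-\theta)$ (a $u$-th root from H\"older) is not available.

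The paper's actual engine for the coupled regime is elementary and is the idea your proposal is missing. One splits at $\Pi\bm a\le n^{\delta}$ versus $\Pi\bm a>n^{\delta}$. For small products, CRT plus the pointwise Lemma~\ref{lemaverage} (which holds for \emph{any} modulus and \emph{any} residue, so the generic-residue obstruction is irrelevant here) gives $O(n^{-1/2})$ per tuple, and there are only $\ll n^{\theta\delta+\ve}$ such tuples. For large products one drops the absolute value, keeps only the nonnegative pmf sum, and restricts $l$ to the window $I_{l_0}$ of length $\asymp\sqrt n\log n$ where the pmf concentrates (Lemma~\ref{lemBio}); there $l-v_t\neq 0$, so for each fixed $l$ at most $\tau(l-v_t)\ll x^{\ve}$ values of $a_t$ can satisfy $a_t\mid l-v_t$. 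This collapses all but the largest modulus at a cost of $x^{\ve}$, leaving a single-modulus sum over $a_1\gg(\Pi\bm a)^{1/u}\ge n^{\delta/u}$ with the \emph{special} residue $v_1$, to which Lemma~\ref{lemubBone} (the engine behind Theorem~\ref{thmone}) applies. The exponent $\tfrac1{2u}(1-\theta)$ thus comes from $\max_t a_t\ge(\Pi\bm a)^{1/u}$, not from a H\"older root. Without this divisor-counting collapse (or a substitute for it), your argument does not close.
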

It is worth remarking that Theorem \ref{thmaverageB} is not a direct result of Theorem \ref{thmone} by combining $d, a_1,\dots,a_u$ to a variable $a$ with the Chinese Remainder Theorem. The corresponding residue $v$ in applying the Chinese Remainder Theorem would depend on $a$ and $n$, while Theorem \ref{thmone} does not apply to this case.
Theorems \ref{thmone} and \ref{thmaverageB} may have applications on many occasions, and we consider their applications to studying sieved sets in $\alpha$-random walks here.

Let $\mV$ be a subset of $\mathbb{N}$ (or $\mathbb{N}^2$).
We write $P_i$ for the $i$-step of the random walker in $\mathbb{N}$ (or $\mathbb{N}^2$) and apply a sequence $(X_i)_{i\ge 1}$ given by
\begin{equation}\label{eqdefXi}
	X_i=\begin{cases}
		1, & if\ P_i\in \mV,\\
		0, & if\ \text{not}.
	\end{cases}
\end{equation}
Then the proportion of time that $\mV$ is visited by the random walker in the first $n$ steps is
\begin{align}\label{eqdefSn}
	\overline{S}_n(\mV)=\frac {X_1+X_2+\cdots +X_n}n.
\end{align}
Denote by
\[
\lim_{n\rightarrow\infty}\overline{S}_n(\mV)
\]
the density of the sieved set $\mV$ in random walks. We can consider the asymptotic proportion of time, that $\mV$ is visited by a random walker, by calculating the density.

\subsection{Sieved sets of integers and random walks}\label{secint}
Let $\mH=\{h_1,\dots,h_u\}$ be a set of $u$ integers, and let $\mG$ be a subset of $\mathbb{N}$, whose elements are coprime with each other.
We consider a set sieved by $\mG$ with respect to $\mH$, which is defined by
\begin{align}\label{eqv1}
\mV:=\{n\in \mathbb{N}: n\nequiv h_t ~(\bmod ~g), \ \forall  t\le u, \ \forall g\in\mG\}.
\end{align}
Denote by $\nu_g(\mH)$ the number of distinct residue classes modulo $g$ occupied by all elements of $\mH$. It is obvious that $\mV$ is empty if $\nu_g(\mH)=g$ for some $g\in\mG$.
To avoid this, we require
\[
\nu_g(\mH)<g,  \ \forall g\in \mG,
\]
and we say that $\mH$ is \emph{admissible} with respect to $\mG$ if the condition is meeted.

There are many well-known examples of this type, such as the classical set sieved by small primes and the set of square free numbers. We consider this type of sieved set in $\alpha$-random walks.
For $\alpha\in (0,1)$, an $\alpha$-random walk in $\mathbb{N}$, starting at $P_0=0$, is given by
\begin{align}\label{eq1rw}
P_i=P_{i-1}+W_i,\ \ \ \ \ i=1,2,\dots,
\end{align}
with $W_i$ being an independent Bernoulli variable with success probability $\alpha$ that
\begin{align}
W_i=\begin{cases}
1,&\ \text{with probability} ~\alpha,\\
0,&\ \text{with probability}~ 1-\alpha.
\end{cases}
\end{align}
If $\mG$ is not ``too dense", we find that the density of the sieved set in $\alpha$-random walks is equal to its density among $\mathbb{N}$, independently of $\alpha$.

Our first result is about the classical set sieved by small primes, and this is the only case, where we need to apply Theorem \ref{thmone} with $\theta=1$.
For a given constant $0<\beta<1/2$,
we consider the following set
\[
\mV^c(x)=\left\{n\le x: (n,\prod_{p\le \exp(\log^\beta x)}p)=1\right\}.
\]
It is well-known that
\begin{align}\label{eqUx}
|\mV^c(x)|= c_x (1+o(1))x
\end{align}
with
\begin{align}\label{eqcx}
c_x=\prod_{p\le\exp(\log^\beta x)}(1-p^{-1}).
\end{align}
The density of $\mV^c(x)$ among positive integers tends to zero since $c_x\rightarrow0$ as $x\rightarrow\infty$.
By Theorem \ref{thmone},
we find that the asymptotic proportion of time that $\mV^c(n)$ is visited in the first $n$ steps by a random walker has the same asymptotic formula.
\begin{theorem}\label{thmsmallp}
Let $P_i$ be an $\alpha$-random walk as in \eqref{eq1rw}. For any $\alpha \in (0,1)$, we have that
	\begin{align*}
		\overline{S}_n(\mV^c(n))=c_n (1+o(1))
	\end{align*}
	almost surely, where $c_n$ is given by \eqref{eqcx}.
\end{theorem}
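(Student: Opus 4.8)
The plan is to run a second-moment argument whose first- and second-moment inputs both come from feeding the sieve identity $1_{(m,P)=1}=\sum_{d\mid P,\,d\mid m}\mu(d)$ into Theorem \ref{thmone}. Write $z=\exp(\log^\beta n)$ and $P=\prod_{p\le z}p$, so that $X_i=1_{\mV^c(n)}(P_i)$ equals $1$ exactly when $(P_i,P)=1$, while $P_i$ is $\mathrm{Binomial}(i,\alpha)$. Since $\sum_{d\mid P}\mu(d)/d=\prod_{p\le z}(1-p^{-1})=c_n$ by \eqref{eqcx}, expanding the indicator gives $\mathbb{E}[X_i]-c_n=\sum_{d\mid P}\mu(d)\big(\mathbb{P}(d\mid P_i)-\tfrac1d\big)$. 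I would split the divisors at $n^A$. For $d\le n^A$ the set $\mA_n=\{d\mid P:d\le n^A\}$ satisfies Condition A with $\theta=1$, since $|\mA_n\cap[1,y]|$ is at most the number of $z$-smooth integers up to $y$, which is $\ll y\,\rho(\log y/\log z)\ll y\log^{-A}y$ once $y\gg n^\ve$ (there $\log y/\log z\gg\log^{1-\beta}n\to\infty$); Theorem \ref{thmone} with $v=h=0$ then gives $\frac1n\sum_{n^\ve\le i\le n}\sum_{d\mid P,\,d\le n^A}|\mathbb{P}(d\mid P_i)-1/d|\ll\log^{-A}n$. For $d>n^A$ one has $d>i$, so the only reachable multiple of $d$ is $0$ and $\mathbb{P}(d\mid P_i)=(1-\alpha)^i$; the tail $\sum_{d\mid P,\,d>n^A}1/d$ is $\ll\exp(-c\log^{1-\beta}n)$ by Rankin's trick, while the factor $(1-\alpha)^i\le(1-\alpha)^{n^\ve}$ annihilates the rest. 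Together with the trivial bound for $i<n^\ve$ this yields $\mathbb{E}[\overline{S}_n]=c_n(1+o(1))$, as $c_n\asymp\log^{-\beta}n$ dominates the $O(\log^{-A}n)$ error.

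For the variance I write $\mathrm{Var}(\overline{S}_n)=\frac1{n^2}\sum_{i,j}\mathrm{Cov}(X_i,X_j)$, the diagonal being $\le\frac1{n^2}\sum_i\mathbb{E}[X_i]\ll c_n/n$. For $i<j$ I condition on $P_i=s$ and write $P_j=s+R$ with $R\sim\mathrm{Binomial}(j-i,\alpha)$ independent of $P_i$, so that $\mathbb{E}[X_iX_j]=\sum_s\mathbb{P}(P_i=s)\,1_{\mV^c(n)}(s)\sum_{e\mid P}\mu(e)\,\mathbb{P}(R\equiv-s\pmod e)$. The decisive point is that $1_{\mV^c(n)}(s)\ne0$ forces $s\ge1$, whence for $e>n\ge j$ the residue $-s\bmod e=e-s$ exceeds $j-i\ge R$ and $\mathbb{P}(R\equiv-s\pmod e)=0$; thus only $e\le n$ survive, and these satisfy Condition A. Crucially, after conditioning the residue is the single value $-s\in(-n,0]\subset(-An,\alpha' n^\ve/2)$ for \emph{every} modulus $e$, so I may apply Theorem \ref{thmone} to the increment with scale $n$ and fixed $h=-s$, its uniformity in $h$ giving $\sum_{j>i}\sum_{e\mid P,\,e\le n}|\mathbb{P}(R\equiv-s\pmod e)-1/e|\ll n\log^{-A}n$ uniformly in $s$ (the trivial range $j-i<n^\ve$ and the Rankin tail $\sum_{e>n}1/e$ are negligible). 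Summing over $s$ shows $\mathbb{E}[X_iX_j]=c_n\mathbb{E}[X_i]+\mathrm{error}_{ij}$ with $\sum_{i,j}|\mathrm{error}_{ij}|\ll n^2\log^{-A}n$; combined with the first moment, the cross terms $\sum_{i,j}\mathbb{E}[X_i]\,|\mathbb{E}[X_j]-c_n|\ll n^2\log^{-A}n$ as well, so $\mathrm{Var}(\overline{S}_n)\ll\log^{-A}n$.

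To upgrade to an almost sure statement, Chebyshev gives $\mathbb{P}(|\overline{S}_n-\mathbb{E}[\overline{S}_n]|>\eta c_n)\ll\eta^{-2}\log^{-A+2\beta}n$. Along a geometric subsequence $n_k=\lfloor(1+\delta)^k\rfloor$ this is summable once $A>1+2\beta$, so Borel--Cantelli yields $\overline{S}_{n_k}=c_{n_k}(1+o(1))$ a.s. To fill the gaps I would use that, for the fixed set $\mV^c(n_k)$, the partial sums $\sum_{i\le m}1_{\mV^c(n_k)}(P_i)$ are nondecreasing in $m$ and hence squeezed between their (concentrated) values at $n_k$ and $n_{k+1}$; since $z$ varies slowly ($\log^\beta((1+\delta)n_k)-\log^\beta n_k\to0$, so $c_n/c_{n_k}\to1$) and $\mV^c(n)\subseteq\mV^c(n_k)$ loses only $\mathbb{E}|\{i\le n:P_i\in\mV^c(n_k)\setminus\mV^c(n)\}|=o(nc_{n_k})$ points, the moving target perturbs $\overline{S}_n$ negligibly. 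Letting $\delta\to0$ through a countable set gives $\overline{S}_n=c_n(1+o(1))$ a.s.

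The step I expect to be most delicate is the second moment: one must track a single residue across all the sieve moduli so that the uniform-in-$h$ form of Theorem \ref{thmone} applies, which is exactly what conditioning on $P_i$ achieves (with the global scale $n$ keeping $h=-s$ admissible even for $i$ close to $n$), and one must arrange for the large moduli to drop out — here from $P_i\ge1$ — rather than bounding them by their number, which would be ruinous. A secondary subtlety, peculiar to this sieved set, is the interpolation, where the dependence of $\mV^c(n)$ on $n$ has to be reconciled with the monotonicity argument through the slow variation of $z=\exp(\log^\beta n)$.
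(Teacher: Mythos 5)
Your proposal is correct and follows essentially the same route as the paper: the squarefree smooth moduli are shown to satisfy Condition~A with $\theta=1$ (your Dickman-$\rho$ count replaces the paper's more elementary $\tau(a)=2^{\omega(a)}$ trick in Lemma~\ref{lempA}), the first and second moments are computed exactly as in Propositions~\ref{proposition11} and~\ref{proposition} by feeding the M\"obius expansion of the indicator into Theorem~\ref{thmone}, with the same conditioning $P_j=P_i+R$ and the same reliance on the uniformity in $h=-s$ of Theorem~\ref{thmone} for the increment. The only substantive deviation is the almost-sure upgrade: the paper's Lemma~\ref{lemsmm} takes the dense subsequence $k(m)=\exp(m^\sigma)$ with $\sigma<1/(B+1)$ precisely because its additive interpolation bound $|\overline{S}_n-\overline{S}_{k(m)}|\le 2(1-k(m)/n)$ must be $o(c_n)$ with $c_n\to0$, whereas your geometric subsequence would fail with that crude bound; you avoid this by the multiplicative monotone squeeze $\tfrac{n_k}{n_{k+1}}\overline{S}_{n_k}\le\overline{S}_n\le\tfrac{n_{k+1}}{n_k}\overline{S}_{n_{k+1}}$ applied to the frozen set $\mV^c(n_k)$, followed by $\delta\to0$, which is equally valid (it only requires concentration of the mismatched quantities $\overline{S}_{n_{k\pm1}}(\mV^c(n_k))$, available from the same moment computations) and handles both the vanishing target and the $n$-dependence of the sieved set at least as transparently as the paper's condition~\eqref{eqsm1}.
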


Denote by
\begin{align}\label{eqdefA}
\mA=\{a\in \mathbb{N}: a=1\ \text{or}\ a=g_1g_2\cdots g_s\ \text{with}\ g_i\in\mG,\ g_i\neq g_j,\ s\ge1\}
\end{align}
a set consisting of products of distinct elements in $\mG$.
We write
\[
\mA_x=\{a\in\mA: a\le x^A\}.
\]
If $\mG$ is not ``too dense", the set $\mA$
could meet Condition A with $\theta<1$ for sufficiently large $x$. A well-known sieved set of this type is the set of $k$-free numbers.
With $\mG=\{p^2: p \ \text{is prime}\}$ and $\mH=\{0\}$, $\mV$ denotes the set of square-free numbers, and it is well-known that
\[
\lim_{x\rightarrow\infty}\frac{|\mV(x)|}{x}=\frac1{\zeta(2)}=\frac{6}{\pi^2}\approx 0.60793,
\]
where $\zeta(s)$ is the Riemann zeta-function.
With $\mH=\{0,1\}$, it evolves into the set of consecutive square-free numbers, and it is proved by Carlitz \cite{Car32} that
\[
\lim_{x\rightarrow\infty}\frac{|\mV(x)|}{x}=\prod_{p}\left(1-\frac2 {p^{2}}\right)+O\left(x^{-\frac13+\ve}\right).
\]
With $\mH=\{0,1,2\}$, it further involves into the set of consecutive square-free numbers of length $3$. For a general sieved set of this type, we can follow the way of Carlitz \cite{Car32} to deduce its density in the following theorem.
\begin{theorem}\label{pro1v}
Let $\mH=\{h_1,\dots,h_u\}$ be an admissible set with respect to $\mG$, and $\mV$ is a sieved set defined by \eqref{eqv1}. If $\mA_x$ meets Condition A with $\theta<1$, we have
\begin{align}
|\mV(x)|=cx+O\left(x^{\frac{u\theta}{u\theta-\theta+1}+\ve}\right)
\end{align}
with
\begin{align}\label{eqc}
c=\prod_{g\in\mG}\left(1-\frac{\nu_g(\mH)}{g}\right).
\end{align}
\end{theorem}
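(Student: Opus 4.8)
The plan is to reduce the count to a $u$-fold Legendre sieve and then balance a single truncation parameter. The starting point is that $n\in\mV$ precisely when each shifted value $n-h_t$ is \emph{$\mG$-free}, i.e.\ divisible by no element of $\mG$. Writing $Q$ for the indicator of $\mG$-free integers and $s(a)$ for the number of $\mG$-factors of $a\in\mA$ (notation as in \eqref{eqdefA}), the coprimality of the elements of $\mG$ gives the Legendre expansion $Q(m)=\sum_{a\in\mA,\,a\mid m}(-1)^{s(a)}$, and hence $1_{\mV}(n)=\prod_{t=1}^u Q(n-h_t)$. Substituting and expanding the product I obtain
\[
|\mV(x)|=\sum_{a_1,\dots,a_u\in\mA}\Big(\prod_{t=1}^u(-1)^{s(a_t)}\Big)\,\#\{n\le x:\ a_t\mid(n-h_t)\ \forall t\},
\]
where, by the Chinese Remainder Theorem, the inner count equals $x/\operatorname{lcm}(a_1,\dots,a_u)+O(1)$ when the congruences $n\equiv h_t\ (\bmod\ a_t)$ are compatible, and $0$ otherwise.

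Next I would fix a truncation level $D$, replace each $Q(n-h_t)$ by its truncation $Q_D(m)=\sum_{a\in\mA,\,a\mid m,\,a\le D}(-1)^{s(a)}$, and split off the main term $\mathrm{MainTerm}=\sum_{n\le x}\prod_t Q_D(n-h_t)$. Using $0\le Q\le 1$ to telescope $\prod_t Q-\prod_t Q_D$ into a single error factor $R_D(n-h_j)=Q(n-h_j)-Q_D(n-h_j)$ per shift, the difference is controlled by $\sum_{n\le x}\#\{a\in\mA:\ a\mid(n-h_j),\ a>D\}$; since $a\mid(n-h_j)$ with $n\le x$ forces $a\le x+C$ (with $C=\max_t|h_t|$), reversing the order of summation and applying Condition A with partial summation yields $\sum_{D<a\le x+C,\,a\in\mA}(x/a+1)\ll xD^{\theta-1+\ve}+x^{\theta+\ve}$. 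This single-shift reduction is the heart of the estimate.

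It then remains to expand $\mathrm{MainTerm}$ over tuples with all $a_t\le D$. The $O(1)$ floor errors contribute at most $|\mA(D)|^u\ll D^{u\theta+\ve}$ by Condition A, while the principal sum $x\sum_{a_t\le D}\prod_t(-1)^{s(a_t)}/\operatorname{lcm}(a_1,\dots,a_u)$ (taken over compatible tuples) agrees with $cx$ up to a further error $\ll xD^{\theta-1+\ve}$: extending the sum to all of $\mA^u$ gives exactly the density, which factorizes over the coprime $g\in\mG$ into the local factors $(g-\nu_g(\mH))/g$ and hence equals the constant $c$ of \eqref{eqc}, and the truncation tail is bounded by the same single-shift divisor sum as above. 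Collecting the three contributions $D^{u\theta+\ve}$, $xD^{\theta-1+\ve}$ and $x^{\theta+\ve}$ and choosing $D=x^{1/((u-1)\theta+1)}$ to balance the first two yields the exponent $\tfrac{u\theta}{u\theta-\theta+1}$; the term $x^{\theta+\ve}$ is dominated because $\tfrac{u\theta}{u\theta-\theta+1}\ge\theta$ whenever $\theta\le 1$.

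The main obstacle I anticipate is precisely the tail of the Legendre sieve: a pointwise bound on one large modulus, summed over all $n$, is far too lossy (it would give an error of size $x^{1+\ve}$), so the power saving depends essentially on discarding the sieving on all but one shift through $Q\le 1$ and then exploiting the sparsity of $\mA$ via Condition A and the Rankin/partial-summation estimates. A secondary, more technical point is the clean identification of the principal constant with $c$, together with absolute convergence of its defining series in the presence of nontrivial $\gcd$'s among the $a_t$; both rely on $\theta<1$, which forces $\sum_{a\in\mA}1/a<\infty$.
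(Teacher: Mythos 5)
Your proposal is correct and follows essentially the same route as the paper: a M\"obius/Legendre expansion of $1_{\mV}$ over tuples from $\mA$, a truncated main term identified with the Euler product $c$, and a tail reduced to a single large modulus and controlled via Condition A and the $\sum_{a>D}1/a$ estimate, with the same balancing yielding the exponent $\frac{u\theta}{u\theta-\theta+1}$. The only differences are cosmetic (per-coordinate truncation and lcm-compatibility versus the paper's coprimality-restricted expansion of Lemma \ref{lem1V} and truncation of the product $\Pi\bm{a}$), plus a harmless imprecision in the telescoping step, where the truncated factors $Q_D$ are not in $[0,1]$ but are bounded by $x^{\ve}$, which the final error term absorbs.
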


With $\theta=1/2$ and $u=2$ in Theorem \ref{pro1v}, it gives the result of Carlitz \cite{Car32}. Heath-Brown \cite{HB84} has sharpened the error term of Carlitz's result to $O\left(x^{-\frac{4}{11}}\log^7x\right)$ by introducing the celebrated square sieve method.
However, we do not pursue this direction here since we only care about its density in $\alpha$-random walks.


For a sieved set of this type, we can deduce its density in $\alpha$-random walks with the help of Theorem \ref{thmaverageB}.

\begin{theorem}\label{thm2}
With $\mH$ and $\mV$ being as in Theorem \ref{pro1v}, let $P_i$ be an $\alpha$-random walk as in \eqref{eq1rw}. If $\mA_n$ meets Condition A with $\theta<1$, we have, for any $\alpha \in (0,1)$, that
	\begin{align*}
		\lim_{n\to\infty} \overline{S}_n(\mV)=c
	\end{align*}
	almost surely, where $c$ is the constant given by \eqref{eqc}.
\end{theorem}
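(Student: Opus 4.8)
The plan is to run the second-moment method of \cite{CFF19}, reducing everything to the mean and variance of $\overline S_n(\mV)$ and controlling both by the average estimate of Theorem~\ref{thmaverageB}. Since $P_i=W_1+\cdots+W_i$ is a sum of independent Bernoulli$(\alpha)$ variables, $P_i$ has the $\mathrm{Bin}(i,\alpha)$ law, so
\[
\mathbb{E}[X_i]=\mathbb{P}(P_i\in\mV)=\sum_{l\in\mV}\binom il\alpha^l(1-\alpha)^{i-l}.
\]
It then suffices to prove the two estimates
\[
\Big|\tfrac1n\sum_{i\le n}\mathbb{E}[X_i]-c\Big|=O(n^{-\delta}),\qquad \operatorname{Var}\big(\overline S_n(\mV)\big)=O(n^{-\delta})
\]
for some fixed $\delta>0$; the passage to almost-sure convergence is then routine and is carried out in the last paragraph.

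For the mean I would expand the sieve indicator by inclusion--exclusion over $\mG$. Writing $R_g$ for the set of $\nu_g(\mH)$ residues modulo $g$ hit by $\mH$, one has $1_{\mV}(l)=\prod_{g\in\mG}\big(1-\sum_{r\in R_g}1_{l\equiv r\,(g)}\big)$. Since $\mA_n$ meets Condition~A with $\theta<1$ the product $c=\prod_{g}(1-\nu_g(\mH)/g)$ in \eqref{eqc} converges, so truncating at $g\le n^{\ve}$ changes it by $o(1)$; expanding the truncated product produces a signed sum over $a\in\mA_n$ of congruence conditions $l\equiv v\,(a)$ whose residues are fixed combinations of the $h_t$ (this is the $v_i=h_i$ case). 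Feeding this into $\mathbb{E}[X_i]$ and replacing each inner sum $\sum_{l\equiv v\,(a)}\binom il\alpha^l(1-\alpha)^{i-l}$ by its main term $1/a$ from \eqref{eq1}, the principal terms reassemble to $\prod_g(1-\nu_g(\mH)/g)=c$, exactly as in the density computation of Theorem~\ref{pro1v}; the accumulated error, once averaged over $i\le n$, is $O(n^{-\delta})$ by Theorem~\ref{thmaverageB}.

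The heart of the argument is the variance. Writing $\operatorname{Var}(\overline S_n)=\frac1{n^2}\sum_{i,j\le n}\operatorname{Cov}(X_i,X_j)$, the diagonal contributes $O(1/n)$, so it remains to show $\sum_{i<j\le n}|\operatorname{Cov}(X_i,X_j)|=O(n^{2-\delta})$. Here I would use that $P_j-P_i\sim\mathrm{Bin}(j-i,\alpha)$ is independent of $P_i$, so that, conditioning on $P_i=l$,
\[
\mathbb{E}[X_iX_j]=\sum_{l\in\mV}\mathbb{P}(P_i=l)\,\mathbb{P}\big(l+(P_j-P_i)\in\mV\big),
\]
and the inner probability, sieved on the increment, is a signed sum of terms $\sum_{k\equiv w\,(b)}\binom{j-i}{k}\alpha^{k}(1-\alpha)^{j-i-k}$ with $b\in\mA_n$. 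When a modulus $g$ is common to the sieves of $P_i$ and of $P_j$ it pins $P_j-P_i$ to a fixed residue modulo a divisor $d$ (the set of such $d$ is of size $\ll n^{\ve}$ and determined by $n$, matching $\mD_n$), while the remaining moduli impose on the increment residues of the shape $w\equiv h_s-l$, i.e. the reflected case $v_i=n-h_i$ with $\alpha'=1-\alpha$. Splitting the range of $j-i$ into short gaps ($\le n^{\ve}$, bounded trivially by $|\operatorname{Cov}|\le1$ and negligible in total) and long gaps, and averaging the total sieve error over the increment length via Theorem~\ref{thmaverageB}, gives $\sum_{j>i}|\operatorname{Cov}(X_i,X_j)|=O(n^{1-\delta})$ for each $i$, hence $O(n^{2-\delta})$ after summing over $i$. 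The main obstacle is precisely this reduction: the residues produced by the second sieve depend on the position $l=P_i$, so Theorem~\ref{thmone} does not apply and one must recast the estimate in the multiple-modulus form of Theorem~\ref{thmaverageB}, using the reflection symmetry of the binomial \emph{pmf} to realise the position shift as the $v=n-h$ case and isolating the common moduli into the auxiliary modulus $d\in\mD_n$.

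Finally, from $\mathbb{E}[\overline S_n]\to c$ and $\operatorname{Var}(\overline S_n)=O(n^{-\delta})$ I would deduce the almost-sure statement by Chebyshev's inequality and Borel--Cantelli along a polynomial subsequence. Taking $n_k=\lfloor k^{2/\delta}\rfloor$ makes $\sum_k\operatorname{Var}(\overline S_{n_k})\ll\sum_k k^{-2}<\infty$, so $\overline S_{n_k}(\mV)\to c$ almost surely. Since $T_n:=X_1+\cdots+X_n$ is nondecreasing in $n$, for $n_k\le n\le n_{k+1}$ one has
\[
\frac{n_k}{n_{k+1}}\,\overline S_{n_k}(\mV)\le \overline S_n(\mV)\le \frac{n_{k+1}}{n_k}\,\overline S_{n_{k+1}}(\mV),
\]
and $n_{k+1}/n_k\to1$; letting $k\to\infty$ squeezes $\overline S_n(\mV)\to c$ almost surely, which is the assertion of Theorem~\ref{thm2}.
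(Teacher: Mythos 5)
Your proposal is correct and, for the substantive part, follows the paper's own route: expand $1_{\mV}$ by the sieve identity (the paper's Lemma~\ref{lem1V}), compute $\mathbb{E}(\overline S_n)$ and $\mathbb{E}(\overline S_n^2)$ by replacing each sieved binomial sum with its main term $1/\Pi\bm{a}$, and control the accumulated error --- whose residues in the covariance term depend on the position $P_i=s$ --- by the averaged multi-modulus estimate of Theorem~\ref{thmaverageB}; this is exactly what Propositions~\ref{proposition1} and~\ref{proposition2} do, and you correctly identify why Theorem~\ref{thmone} alone is insufficient. Where you genuinely diverge is the final passage to almost-sure convergence: the paper invokes its general second-moment Lemma~\ref{lemsmm}, engineered for expectations that may decay like a power of $\log n$ (as needed for Theorem~\ref{thmsmallp}), whereas you exploit the polynomial variance decay and the nonvanishing limit $c$ to run Chebyshev and Borel--Cantelli along $n_k=\lfloor k^{2/\delta}\rfloor$ and then squeeze using monotonicity of $X_1+\cdots+X_n$; this is simpler and entirely adequate for the present theorem, though it would not cover the vanishing-density case. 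Two small inaccuracies, neither fatal: the residues $h_t-s$ imposed on the increment are an instance of the \emph{direct} case $v_i=h_i$ of Theorem~\ref{thmaverageB} with a shift lying in $(-An,O(1))$ --- not the reflected case $v_i=n-h_i$ with $\alpha'=1-\alpha$ --- and it is precisely the allowed range $h_i\in(-Ax,\alpha'x'/2)$ that absorbs the dependence on $s$; moreover the auxiliary modulus $d\in\mD_n$ is not needed in this one-dimensional variance computation, since the two sieves constrain the independent variables $s$ and $P_j-P_i$ separately (the $\mD_n$ device is reserved for the lattice-point settings of Lemmas~\ref{lem2} and~\ref{lem3}, where $\Pi\bm{a}$ and $\Pi\bm{b}$ constrain the same variable).
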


\subsection{Application to sieved sets of lattice points}\label{secv2}
Let $\mG_1$ and $\mG_2$ be two sets as $\mG$, and $\mH_1=\{h^1_1,\dots,h^1_u\}$ and $\mH_2=\{h^2_1,\dots,h^2_w\}$ are admissible with respect to $\mG_1$ and $\mG_2$ respectively. As in \eqref{eqdefA}, we have $\mA^1_x$ and $\mA^2_x$ according to $\mG_1$ and $\mG_2$.
We consider the following sieved set of lattice points defined via
\begin{align}\label{eqv2}
\mV^2:=\{(m,n)\in \mathbb{N}^2: m\in\mV_1, n\in \mV_2\}
\end{align}
with $\mV_i$ being defined as in \eqref{eqv1}, which are subsets of $\mathbb{N}$ sieved by $\mG_i$ with respect to $\mH_i$. Also, we write
 \[
 \mV^2(x)=\mV^2\cap[1,x]\times[1,x].
 \]
 As an example, $\mV^2$ turns out to be the set of square-free lattice points if $\mG_1=\mG_2=\{p^2: p\ \text{is prime}\}$ with $\mH_1=\mH_2=\{0\}$, and its density is known as $\zeta(2)^{-2}$.

For the density of $\mV^2$ among lattice points, a direct result of Theorem \ref{pro1v} shows that
\[
\left|\mV^2(x)\right|=c_1c_2x^2+O\left(\left(x^{\frac{u\theta}{u\theta-\theta+1}}+x^{\frac{w\theta}{w\theta-\theta+1}}\right)x^{1+\ve}\right),
\]
with
\begin{align}\label{eqc1c2}
c_1=\prod_{g\in\mG_1}\left(1-\frac{\nu_g(\mH_1)}{g}\right),\ \ \ \ \ c_2=\prod_{g\in\mG_2}\left(1-\frac{\nu_g(\mH_2)}{g}\right).
\end{align}

For $0<\alpha<1$, an $\alpha$-random walk in the two-dimensional lattice, starting from $P_0=(0,0)$, is defined by
\begin{equation}\label{eq2rw}
	P_{i+1}=P_i+\begin{cases}
		(1,0), & with\  probability\  \alpha,\\
		(0,1), & with\  probability\  1-\alpha,
	\end{cases}
\end{equation}
where $P_i=(x_i,y_i)$ is the coordinate of the $i$-th step of the $\alpha$-random walker for $i=0,1,2,\cdots$.
We find that the density of $\mV^2$ in random walks is equal to the density of $\mV^2$ among lattice points, independently of $\alpha$.

\begin{theorem}\label{thm3}
Let $\mV^2$ be a sieved set given by \eqref{eqv2}, and let $P_i$ be an $\alpha$-random walk as in \eqref{eq2rw}.
If $\mA^1_n$ and $\mA^2_n$ meet Condition A with $\theta_1, \theta_2<1$, we have, for any $\alpha\in(0,1)$, that
\[
\lim_{n\to\infty} \overline{S}_n(\mV^2)=c_1c_2
\]
almost surely, where $c_1$ and $c_2$ are constants given by \eqref{eqc1c2}.
\end{theorem}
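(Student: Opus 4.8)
The plan is to run the second-moment method on the indicator sequence $(X_i)$ of \eqref{eqdefXi}, in parallel with the proof of Theorem \ref{thm2} but using the conditional increments of the planar walk. Since a walk of \eqref{eq2rw} has $P_i=(x_i,i-x_i)$ with $x_i$ a sum of $i$ independent Bernoulli$(\alpha)$ steps, we have $X_i=1_{\mV_1}(x_i)1_{\mV_2}(i-x_i)$ and
\[
\mathbb{E}[X_i]=\sum_l b_i(l)\,1_{\mV_1}(l)\,1_{\mV_2}(i-l),\qquad b_i(l):=\binom il\alpha^l(1-\alpha)^{i-l}.
\]
It suffices to prove $\mathbb{E}[\overline S_n(\mV^2)]=c_1c_2+o(1)$ and $\mathrm{Var}(\overline S_n(\mV^2))=O(n^{-\delta})$ for some fixed $\delta>0$; Chebyshev's inequality along a polynomially growing subsequence $n_k$ with $n_{k+1}/n_k\to1$, Borel--Cantelli, and the interpolation afforded by $0\le X_i\le1$ (the partial sums $\sum_{i\le n}X_i$ being nondecreasing) then give the almost sure statement, as in \cite{CFF19}.

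For the mean I write $\mV_1=\bigcap_{t\le u}\mV_1^{(t)}$ with $\mV_1^{(t)}=\{l:l\nequiv h^1_t\ (g)\ \forall g\in\mG_1\}$, so that inclusion--exclusion over the pairwise coprime moduli of $\mG_1$ gives
\[
1_{\mV_1}(l)=\prod_{t=1}^u\sum_{a_t\in\mA^1}(-1)^{\omega(a_t)}1_{l\equiv h^1_t\,(a_t)},\qquad 1_{\mV_2}(i-l)=\prod_{s=1}^w\sum_{b_s\in\mA^2}(-1)^{\omega(b_s)}1_{i-l\equiv h^2_s\,(b_s)},
\]
the key gain being that each modulus carries the \emph{fixed small} residue $h^1_t$ (of ``$v=h$'' type) or $i-h^2_s$ (of ``$v=n-h$'' type), which is exactly the shape admitted by Theorem \ref{thmaverageB}. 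Expanding both products produces a system of $U:=u+w$ congruences in $l$; after truncating the moduli at $i^A$ and extracting their common factor $d$, pairwise consistency forces divisibility conditions such as $g\mid(i-h^1_t-h^2_s)$, so the admissible $d$ form a set $\mD_i$ of size $\ll i^\ve$. Applying Theorem \ref{thmaverageB} with $U$ families (the first $u$ equal to $\mA^1$, the last $w$ equal to $\mA^2$) and $\theta=\max(\theta_1,\theta_2)$ replaces each inner sum $\sum_{l\equiv\cdots}b_i(l)$ by its main term with total error $O(n^{-\frac1{2U}(1-\theta)+\ve})$. Averaging the consistency conditions over $i$ contributes reciprocal factors that decouple the $\mV_1$- and $\mV_2$-parts, and the surviving main term reproduces the Carlitz-type density computation of Theorem \ref{pro1v} for each factor, giving $c_1c_2$ by \eqref{eqc1c2}.

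For the variance I estimate $\mathrm{Cov}(X_i,X_j)$, $i<j$. Conditioning on $P_i$ and using that the increment $x_j-x_i\sim\mathrm{Bin}(j-i,\alpha)$ is independent of $P_i$, both $\mathbb{E}[X_iX_j]=\sum_l b_i(l)1_{\mV^2}(l,i-l)Q(l,i,j)$ and $\mathbb{E}[X_j]=\sum_l b_i(l)Q(l,i,j)$ are built from the same weight $b_i$ and the same conditional probability
\[
Q(l,i,j)=\sum_m\binom{j-i}m\alpha^m(1-\alpha)^{j-i-m}1_{\mV_1}(l+m)1_{\mV_2}(j-l-m),
\]
so $\mathrm{Cov}(X_i,X_j)=\sum_l b_i(l)1_{\mV^2}(l,i-l)\big(Q(l,i,j)-\mathbb{E}[X_j]\big)$. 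The decisive point is to compare $Q$ not to $c_1c_2$ but to its own $b_i$-mean $\mathbb{E}[X_j]$: expanding $Q$ by the same product inclusion--exclusion in the variable $m$, the residues become $h^1_t-l$ and $j-l-h^2_s$, whose pairwise consistency conditions (for instance $g\mid(j-h^1_t-h^2_s)$) are \emph{independent of the shift $l$}. Hence every constituent of $Q$ has a main term that does not depend on $l$, and subtracting the $b_i$-mean kills it, leaving only equidistribution errors:
\[
|\mathrm{Cov}(X_i,X_j)|\ll\sum_{(\vec a,\vec b)}\sum_l b_i(l)\bigg|\sum_{\substack{m\equiv h^1_t-l\,(a_t)\ \forall t\\ m\equiv j-l-h^2_s\,(b_s)\ \forall s}}\binom{j-i}m\alpha^m(1-\alpha)^{j-i-m}-\mathrm{main}\bigg|,
\]
with residues of ``$h$'' and ``$n-h$'' type in the modulus-variable $k=j-i$ and with parameters $h^1_t-l$ and $l-i+h^2_s$, bounded above by $O(1)$, safely inside the window $(-Ax,\alpha'x'/2)$. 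For fixed $i,l$, summing the discrepancy over the gap $k$ and invoking Theorem \ref{thmaverageB} (with the common factors of $\vec a,\vec b$ absorbed into $\mD_k$) bounds it by $O(n^{1-\delta})$, $\delta=\tfrac1{2U}(1-\theta)-\ve$. Summing against $b_i$, of total mass $1$, and over $i$ gives $\sum_{i,j}|\mathrm{Cov}(X_i,X_j)|=O(n^{2-\delta})$, whence $\mathrm{Var}(\overline S_n(\mV^2))=O(n^{-\delta})$.

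The crux is the variance, and in particular the cancellation that must be retained: a termwise comparison $|Q-c_1c_2|$ is too weak, since its average over the gap $k$ does not tend to $0$; the gain comes only from comparing $Q$ to the conditional mean $\mathbb{E}[X_j]$ and using that the consistency conditions for the $m$-congruences do not involve the shift $l$. Carrying this through demands that the common-factor extraction into $\mD_k$ be organised consistently across the $U=u+w$ coupled inclusion--exclusion layers, that the two exponents be handled together through $\theta=\max(\theta_1,\theta_2)$, and that the summation be arranged to average over $k$ first, so that the unavoidable $x^\ve$ loss in Theorem \ref{thmaverageB} is absorbed into a genuine power saving surviving the $n^{-2}\sum_{i,j}$ normalisation. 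The incidental points---truncating the moduli at $n^A$, discarding the $\ll n^\ve$ short gaps $k<n^\ve$ to which Theorem \ref{thmaverageB} does not apply, and verifying the range of the shifted residue parameters---are routine and negligible via the trivial bound $\mathrm{Var}(X_i)\le1$.
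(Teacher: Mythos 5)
Your proposal is correct and follows the same overall architecture as the paper: second-moment method; inclusion--exclusion over the sieve moduli with residues of the ``$v=h$'' and ``$v=n-h$'' shapes; extraction of the pairwise common factors into a set $\mD$ of size $\ll n^\ve$ so that Theorem \ref{thmaverageB} applies with $u+w$ families and $\theta=\max(\theta_1,\theta_2)$; and averaging of the induced congruence on the step index to decouple the two coordinates (this is the paper's Lemma \ref{lem2}, applied to the expectation and, twice, to $\mathbb{E}(\overline{S}_n^2)$ in Propositions \ref{prolat1} and \ref{prolat2}). The one point where you genuinely deviate is the variance: the paper evaluates $\mathbb{E}(\overline{S}_n^2)$ outright as $c_1^2c_2^2$ plus an error by applying Lemma \ref{lem2} to the inner gap-sum (with residues shifted by the position $s$ at step $i$, which the lemma tolerates since its error is uniform in the shifts) and again to the outer sum, then subtracts $\mathbb{E}(\overline{S}_n)^2$; you instead bound each covariance by comparing $Q(l,i,j)$ with its own $b_i$-mean $\mathbb{E}[X_j]$ and noting that the main term of every constituent of $Q$ is independent of the shift $l$ (the consistency conditions read $g\mid j-h^1_t-h^2_s$ and the like), so it cancels exactly and only the Theorem \ref{thmaverageB} discrepancies survive. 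That is valid and arguably cleaner, since the variance step then needs only the error-term half of Lemma \ref{lem2}; your diagnosis that a pointwise comparison $\lvert Q-c_1c_2\rvert$ would fail is also right, because the signed main term fluctuates with $j$ through the consistency conditions, which is exactly why the paper keeps everything signed inside Lemma \ref{lem2}. The only place you are materially thinner than the paper is the main-term computation in the expectation --- the $\rho_i$, $\nu_g(\mH_i)$ bookkeeping that reassembles the Euler product $c_1c_2$ after the common factor $d$ is averaged out --- which you assert rather than carry out; it does work as you claim, but it is the most delicate part of Lemma \ref{lem2} and should be written down.
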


\subsection{Sieved set among visible lattice points}\label{secvl}
A lattice point $(m,n)$ is visible from the origin, or simply visible, if $(m,n)=1$. It is well-known that the asymptotic proportion of visible lattice points is $\zeta(2)^{-1}$; See also \cite[Section 3.8]{Ap76}.
The structure of the distribution of visible lattice points and its generalizations have been studied in many aspects, such as, its angular distribution, diffraction pattern, ergodic properties and so on. There are a large number of investigations, and one may see Boca et al. \cite{BCZ00}, Baake et al. \cite{BMP00}, Baake and Huck\cite{BH15}, Adhikari and Granville \cite{AG09}, Chen and Cheng \cite{CC03}, Liu and Meng \cite{LM20} as examples.


Let $\mV_1$, $\mV_2$ be two sieved sets of integers as before.
We further consider the following sieved set of visible lattice points defined via
\begin{align}\label{eqv3}
\mV^3:=\{(m,n)\in \mathbb{N}^2: m\in\mV_1, n\in \mV_2 ~\text{and} ~(m,n)=1\}
\end{align}
and
\[
\mV^3(x)=\mV^3\cap[1,x]\times[1,x].
\]
Before this, some notations would be presented here for ease of reference.
For an integer $r$, denote by $\kappa_i(r)$ the number of $h^i_t\in\mH_i$, satisfying $h^i_t\equiv0 \pmod r$.
We write $\rho_i(r)$ for the product of all $g\in\mG_i$ with $(g,r)>1$, that is to say,
\begin{align}\label{eqgi}
\rho_i(r)=\prod\limits_{\substack{g\in\mG_i\\(g,r)>1}}g.
\end{align}
We further apply $\omega_i(r)$ for the product
\begin{align}\label{eqomega}
\omega_i(r)=\prod\limits_{g\mid \rho_i(r)}\frac{-\kappa_i((g,r))}{g}.
\end{align}
For a given set $\mH_i=\{h^i_1,\dots, h^i_{u}\}$, we write $\bm{h_i}$ for the vector $\bm{h_i}=(h^i_1,\dots, h^i_{u})^T$.

\begin{theorem}\label{thm4}
If $\mA^1_x$ and $\mA^2_x$ meet Condition A with $\theta_1, \theta_2<1$, we have
\begin{align*}
\left|\mV^3(x)\right|=c_1c_2c_3x^2 +O\left(\left(x^{\frac{u\theta}{u\theta-\theta+1}}+x^{\frac{w\theta}{w\theta-\theta+1}}\right)x^{1+\ve}\right)
\end{align*}
with $c_1$, $c_2$ as in \eqref{eqc1c2} and $c_3$ being another constant defined via
\begin{align}\label{eqc3}
c_3=\sum_r\frac{\mu(r)}{r^2}f_1(r)f_2(r)
\end{align}
with
\begin{align}\label{eqfi}
f_i(r)=\prod_{g\mid \rho_i(r)}\left(1-\frac{\nu_g(\mH_i)}{g}\right)^{-1}\sum_{d\mid (r,\Pi\bm{h_i})}d\omega_i(d)
\end{align}
for $i=1,2$.
\end{theorem}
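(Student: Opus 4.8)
The plan is to strip off the visibility constraint $(m,n)=1$ by M\"obius inversion and thereby reduce everything to counting each sieved set $\mV_i$ inside a single arithmetic progression. Writing $1_{(m,n)=1}=\sum_{d\mid(m,n)}\mu(d)$ and interchanging the order of summation gives
\[
\left|\mV^3(x)\right|=\sum_{d\le x}\mu(d)\,N_1(d)\,N_2(d),\qquad N_i(d)=\#\{m\le x:\ d\mid m,\ m\in\mV_i\}.
\]
The entire theorem then rests on an arithmetic-progression refinement of Theorem \ref{pro1v}, namely
\[
N_i(d)=\frac{c_i\,f_i(d)}{d}\,x+O\!\left((x/d)^{\beta_i+\ve}\right),
\]
with $f_i$ as in \eqref{eqfi} and $\beta_1=\tfrac{u\theta}{u\theta-\theta+1}$, $\beta_2=\tfrac{w\theta}{w\theta-\theta+1}$ the two exponents in the statement. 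Granting this, substitution produces a main term $c_1c_2x^2\sum_{d\le x}\mu(d)f_1(d)f_2(d)/d^2$, which I will extend to all $d$ so as to recognise $c_3$ from \eqref{eqc3}, together with cross terms and an error product handled in the last step.

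First I would establish the local-density formula. Setting $m=dm'$ and running the same inclusion--exclusion over $\mA_i$ that underlies Theorem \ref{pro1v}, the count factorises over the pairwise-coprime moduli $g\in\mG_i$. When $(d,g)=1$ the map $m'\mapsto dm'$ permutes $\ZZ/g\ZZ$, so the local factor stays $1-\nu_g(\mH_i)/g$ and these reproduce $c_i$. When $(d,g)=e>1$ the image of $m'\mapsto dm'\ (\bmod g)$ is the multiples of $e$, hit uniformly, so a forbidden class $h^i_t$ is attainable only if $e\mid h^i_t$; this yields the local factor $1-(d,g)\kappa_i((d,g))/g$ (using that for all but finitely many $g$ the relevant $h^i_t$ are distinct modulo $g$, so $\kappa_i$ does count distinct residues). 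After dividing out $\prod_{(d,g)>1}(1-\nu_g/g)$ to match the normalisation in \eqref{eqfi}, it remains to verify the algebraic identity
\[
\prod_{\substack{g\in\mG_i\\(d,g)>1}}\Bigl(1-\tfrac{(d,g)\kappa_i((d,g))}{g}\Bigr)=\sum_{e\mid(d,\Pi\bm{h_i})}e\,\omega_i(e),
\]
which is where $\rho_i$ and $\omega_i$ enter. Expanding the right-hand side over divisors and using the coprimality of $\mG_i$ reduces both sides to a product over the primes of $d$; the divisor sum collapses to $(d,\Pi\bm{h_i})$ precisely because $\kappa_i(p)=0$ whenever $p\nmid\Pi\bm{h_i}$, so those Euler factors are trivial.

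Next I would bound the sieve error uniformly in $d$. Truncating the inclusion--exclusion at products $a\in\mA_i$ with $a\le z$ costs a tail $\ll(x/d)\sum_{a>z}(\Pi\nu_g)/a$ from the dropped terms and a rounding error $\ll\#(\mA_i\cap[1,z])\ll z^{\theta}/\log^A z$ from replacing each count by its density; balancing these against the $u^{\omega}$-type multiplicity exactly as in Theorem \ref{pro1v} produces the exponent $\beta_i$. Crucially this is uniform in $d$ and in the shifted residues, since the sieve error sees only the \emph{number} $\nu_g\le u$ of forbidden classes, not their positions.

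Finally I would assemble the pieces. As $f_i(d)\ll d^{\ve}$ the main sum converges and its tail beyond $d>D$ is $\ll x^2D^{-1+\ve}$, so it equals $c_1c_2c_3x^2$ up to negligible error; the cross term $x\sum_{d\le x}(f_1(d)/d)\,O((x/d)^{\beta_2+\ve})\ll x^{1+\beta_2+\ve}\sum_d d^{-1-\beta_2+\ve}\ll x^{\beta_2}x^{1+\ve}$ (the sum converging since $\beta_2>0$), symmetrically for the other cross term, and the product $\sum_d E_1E_2$ is dominated by these, giving the stated $O\bigl((x^{\beta_1}+x^{\beta_2})x^{1+\ve}\bigr)$. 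The main obstacle is the uniform sieve estimate of the third step together with its summation against $\mu(d)$ over the full range $d\le x$: restricting to multiples of $d$ replaces $x$ by $x/d$ and twists the residues, and one must carry the delicate exponent $\beta_i$ through both this restriction and the double sum over $d$ and over sieve levels without any loss.
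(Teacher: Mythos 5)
Your proposal is correct and follows essentially the same path as the paper: M\"obius inversion on the condition $(m,n)=1$, factorization of the inner sum into two one-dimensional counts ($N_1(d)N_2(d)$, the paper's $\lambda_1\lambda_2$), a local-density evaluation of each sieved set restricted to the progression $d\mid m$ whose Euler factors assemble into $c_if_i(d)x/d$ with error $O((x/d)^{\beta_i+\ve})$, and the same final summation over $d$. The paper organizes the local computation by extracting $r_1=(r,\Pi\bm{a})$ from the inclusion--exclusion sum and counting assignments of its divisors to coordinates of $\bm{a}$ rather than by the substitution $m=dm'$ and your explicit algebraic identity for $\sum_{e\mid(d,\Pi\bm{h_i})}e\,\omega_i(e)$, but the content is identical.
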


In general, there does not exist an Euler product for
\[
\sum_r\frac{\mu(r)}{r^2}f_1(r)f_2(r),
\]
but one may deduce one if both $\mG_1$ and $\mG_2$ are consisting of primes or prime powers. For example, let us consider the case of $k$-th power free numbers, where
\[
\mG_1=\{p^{k_1}, p ~\text{is prime}\},\ \ \ \ \ \mG_2=\{p^{k_2}, p ~\text{is prime}\}
\]
with $k_1, k_2\ge 2$.
It is easy to see from \eqref{eqfi} that
\[
f_1(r)=\prod_{p\mid r}\Bigg(1-\frac{\nu_{p^{k_1}}(\mH_1)}{p^{k_1}}\Bigg)^{-1}\left(1-\frac{\kappa_1(p)}{p^{k_1-1}}\right),
\]
\[
f_2(r)=\prod_{p\mid r}\Bigg(1-\frac{\nu_{p^{k_2}}(\mH_2)}{p^{k_2}}\Bigg)^{-1}\left(1-\frac{\kappa_2(p)}{p^{k_2-1}}\right).
\]
With this, an easy calculation shows
\begin{align}\label{eqc3k}
c_3=\prod_p\left(1-\left(\frac{p^{k_1-1}-\kappa_1(p)}{p^{k_1}-\nu_{p^{k_1}}(\mH_1)}\right)\left( \frac{p^{k_2-1}-\kappa_2(p)}{p^{k_2}-\nu_{p^{k_2}}(\mH_2)}\right)\right).
\end{align}

It should be noted that $c_3=0$ is possible. For example, with $k_1=k_2=2$ and $\mH_1=\mH_2=\{-1,0,1\}$, the factor due to $p=2$ will vanish in \eqref{eqc3k}. On the other hand, an elementary discussion shows that $\mV_3$ is empty in this case. If $n-1,n,n+1$ and $m-1,m,m+1$ are all square-free numbers, all these six numbers should not be divisible by $4$. Thus, there is $m\equiv n\equiv2\pmod4$, and this conflicts with the condition $(m,n)=1$. However, if adjusts the sets to $\mH_1=\mH_2=\{0,1,2\}$, the constant $c_3$ would not vanish.

Cilleruelo et al. \cite{CFF19} considered the possible visits to visible lattice points by an $\alpha$-random walker.
For a deeper look at visible lattice points in $\alpha$-random walks, we consider the possible visits to $\mV^3$, a sieved subset of visible lattice points, by an $\alpha$-random walker. We obtain that, almost surely, the asymptotic proportion of time the random walker in $\mV^3$ is equal to the density of $\mV^3$ too, and thus, the density of $\mV^3$ does not alter in $\alpha$-random walks.

\begin{theorem}\label{thm5}
Let $\mV^3$ be a set of visible lattice points given by \eqref{eqv3}, and let $P_i$ be an $\alpha$-random walk as in \eqref{eq2rw}.
If $\mA^1_n$ and $\mA^2_n$ meet Condition A with $\theta_1, \theta_2<1$ and $c_3\neq 0$, we have, for any $\alpha\in(0,1)$, that
\[
\lim_{n\to\infty} \overline{S}_n(\mV^3)=c_1c_2c_3
\]
almost surely.
\end{theorem}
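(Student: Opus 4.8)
The plan is to deduce the almost-sure statement from a first-moment asymptotic together with a variance bound, and then to upgrade convergence along a subsequence to the full limit by monotonicity of the partial sums; the argument parallels the proofs of Theorems \ref{thm2} and \ref{thm3}, the new ingredient being the coprimality condition, which I would detect by M\"obius inversion. Throughout, write $T_n=X_1+\cdots+X_n$ with $X_i$ as in \eqref{eqdefXi}, so that $\overline S_n(\mV^3)=T_n/n$, and recall that after $n$ steps of \eqref{eq2rw} the walker sits at $P_n=(l,n-l)$ with probability $\binom nl\alpha^l(1-\alpha)^{n-l}$; thus $(x_n,y_n)=(l,n-l)$ satisfies $x_n+y_n=n$, so that $(x_n,y_n)=1$ is equivalent to $(x_n,n)=1$.

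\textbf{First moment.} I would expand $1_{\mV^3}(P_n)$ by sieving the conditions $x_n\in\mV_1$ and $y_n\in\mV_2$ through inclusion--exclusion over (truncated) products of $\mG_1$ and $\mG_2$, and by detecting coprimality through $\sum_{d\mid(x_n,n)}\mu(d)$. Since every such $d$ divides $n$, the set $\mD_n=\{d:d\mid n\}$ has $\ll n^\ve$ elements, and $\mathbb E[X_n]$ becomes a combination of binomial \emph{pmf} sums of exactly the shape controlled by Theorem \ref{thmaverageB}, with moduli $a_1\in\mA^1_n$, $a_2\in\mA^2_n$ and $d\in\mD_n$. Applying Theorem \ref{thmaverageB} to the average over $x'\le n\le x$ replaces each inner sum by its main term $1/(d\,\Pi\bm a)$ up to an error that is a power saving on average, and reassembling these main terms is precisely the density computation carried out for Theorem \ref{thm4}; the tail of the infinite sieves $\mG_i$ is absorbed by the same truncation estimate used for Theorem \ref{pro1v}, which Condition A makes admissible. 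This yields $\mathbb E[\overline S_x(\mV^3)]=c_1c_2c_3+o(1)$.

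\textbf{Second moment.} For $i<j$ I would condition on $P_i$ and use that the increment $P_j-P_i$ is an independent $(j-i)$-step $\alpha$-walk, so that
\[
\mathbb E[X_iX_j]=\sum_{(a,b)\in\mV^3}\mathbb P(P_i=(a,b))\,\mathbb P\!\left(P_j\in\mV^3\mid P_i=(a,b)\right).
\]
Writing $P_j=(a+m,\,b+(j-i-m))$, the key point is that membership $P_j\in\mV^3$ is governed by congruences on $m$ modulo elements of $\mA^1_n,\mA^2_n$ (the shift by $a,b$ only relabels residues) together with $(a+m,j)=1$, because $(a+m)+(b+(j-i-m))=a+b+(j-i)=j$. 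Hence the conditional probability is once more a single binomial \emph{pmf} sum of the type in Theorem \ref{thmaverageB}, now for the $(j-i)$-step walk with $\mD=\{d:d\mid j\}$ and the fixed residue $\nu\equiv -a$; on average over $j$ it therefore equals $c_1c_2c_3$ up to a power saving, uniformly in the base point $(a,b)$. Summing against the weights $\mathbb P(P_i=(a,b))\,1_{\mV^3}(a,b)$ gives $\mathbb E[X_iX_j]=c_1c_2c_3\,\mathbb E[X_i]+(\text{small})$ on average over the pair, whence $\operatorname{Cov}(X_i,X_j)$ is small on average and $\operatorname{Var}(\overline S_n(\mV^3))\ll n^{-\delta}$ for some $\delta>0$.

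\textbf{Conclusion and main obstacle.} With the variance bound in hand, Chebyshev's inequality and the Borel--Cantelli lemma give $\overline S_{n_k}(\mV^3)\to c_1c_2c_3$ almost surely along $n_k=\lfloor k^{c}\rfloor$ with $c$ chosen large enough that $\sum_k\operatorname{Var}(\overline S_{n_k})<\infty$; since $T_n$ is non-decreasing and $n_{k+1}/n_k\to1$, sandwiching $T_n$ between $T_{n_k}$ and $T_{n_{k+1}}$ upgrades this to the full limit $\overline S_n(\mV^3)\to c_1c_2c_3$ almost surely. The step I expect to be the main obstacle is the two-point estimate: one must make the error in $\mathbb P(P_j\in\mV^3\mid P_i=(a,b))$ small after summing over all pairs $i<j$ and uniformly in the conditioning point, which forces the application of Theorem \ref{thmaverageB} with the modulus $j$ carried in $\mD_j$, and requires checking that the implied constants do not degrade as $(a,b)$ and the gap $j-i$ vary; the infinite-sieve truncation must likewise be performed once at a level compatible with both the first- and second-moment uses of Condition A.
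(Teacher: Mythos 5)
Your proposal is correct and follows essentially the same route as the paper: the coprimality is detected by M\"obius inversion over divisors of $n$ (giving a set $\mD_n$ of size $\ll n^\ve$ feeding into Theorem \ref{thmaverageB}), the first and second moments are computed exactly as in the paper's Lemma \ref{lem3} and its two propositions, and the almost-sure conclusion is drawn by Chebyshev plus Borel--Cantelli along a sparse subsequence with a monotonicity sandwich. The only cosmetic difference is that you use a polynomial subsequence $n_k=\lfloor k^c\rfloor$ directly, whereas the paper packages this step as Lemma \ref{lemsmm} with $k(m)=\exp(m^\sigma)$ (designed to also cover vanishing densities, which is unnecessary here since $c_1c_2c_3\neq0$).
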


\section{Preliminaries}
\subsection{Arithmetic results}
Let $\mu(n)$ be the M\"obius function, and the following orthogonality of the M\"obius function is well-known (see also \cite[Section 2.2]{Ap76}).
\begin{lemma}\label{lemmobius}
For any integer $n$,
\begin{equation*}
	\sum_{d\mid n} \mu (d)=\begin{cases}
		1, & \text{if}\ n=1,\\
		0, & \text{otherwise}.
	\end{cases}
\end{equation*}
\end{lemma}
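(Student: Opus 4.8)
The plan is to split into the two cases $n=1$ and $n>1$, reducing the latter to an elementary binomial identity via the square-free structure of the divisors. For $n=1$ the only divisor is $d=1$, and since $\mu(1)=1$ the sum equals $1$, matching the first branch at once.

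For $n>1$, first I would use that $\mu(d)=0$ whenever $d$ fails to be square-free, so that only the square-free divisors of $n$ contribute. Writing the prime factorization $n=p_1^{a_1}\cdots p_k^{a_k}$ with $k\ge 1$, the square-free divisors are exactly the products $\prod_{i\in S}p_i$ over subsets $S\subseteq\{1,\dots,k\}$, and each such divisor has $\mu\big(\prod_{i\in S}p_i\big)=(-1)^{|S|}$. Grouping the square-free divisors according to the cardinality $j=|S|$, of which there are $\binom{k}{j}$, the divisor sum collapses to $\sum_{j=0}^{k}\binom{k}{j}(-1)^j$.

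The key step is then to recognize this as a binomial expansion: by the binomial theorem, $\sum_{j=0}^{k}\binom{k}{j}(-1)^j=(1-1)^k=0$ for $k\ge 1$, yielding the vanishing claimed in the second branch. Alternatively, one can avoid the case split by noting that $\mu$ is multiplicative, hence so is $F(n)=\sum_{d\mid n}\mu(d)$; the computation then reduces to prime powers, where $\sum_{d\mid p^a}\mu(d)=\mu(1)+\mu(p)=1-1=0$, and the product over the prime powers dividing any $n>1$ vanishes.

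The only point requiring care is the bookkeeping by which the restriction of $\mu$ to square-free divisors converts the divisor sum into the alternating binomial sum (equivalently, the reduction to prime powers in the multiplicative approach). Beyond that the argument is routine, so I do not anticipate any genuine obstacle.
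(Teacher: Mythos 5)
Your proof is correct; the paper does not prove this lemma at all, simply citing it as well-known from \cite[Section 2.2]{Ap76}, and your argument (restrict to square-free divisors, group by the number of prime factors, and evaluate $\sum_{j=0}^{k}\binom{k}{j}(-1)^j=(1-1)^k=0$, or equivalently reduce to prime powers by multiplicativity) is precisely the standard one found in that reference. Nothing further is needed.
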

Our next two lemmas are about the set meeting Condition A.
\begin{lemma}\label{lemsumtau}
Let $\mA$ be a set of integers and $\theta\in(0,1]$. If
\[
|\mA(y)|\ll y^{\theta}\log^{-A}y
\]
holds for any $y\ge x$, we have
\begin{align}\label{eqsumtau}
\sum_{y\le a\in \mA }\frac{1}{a}\ll y^{-(1-\theta)} \log^{-A} y
\end{align}
holds uniformly in $y$ with $y\ge x$.
\end{lemma}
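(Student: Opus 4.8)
The plan is to prove the tail bound \eqref{eqsumtau} by partial summation (Abel summation), using the hypothesis $|\mA(y)|\ll y^\theta\log^{-A}y$ as the counting function and $1/a$ as the smooth weight. The key point is that the counting hypothesis is assumed to hold for \emph{all} $y\ge x$, so we may freely apply it at every scale above $y$ in the summation.

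First I would write the tail sum as a Riemann--Stieltjes integral against the counting measure: setting $\mA(t)=|\mA\cap[1,t]|$, we have
\begin{align*}
\sum_{\substack{a\in\mA\\ a\ge y}}\frac1a=\int_{y^-}^\infty\frac{1}{t}\,d\mA(t).
\end{align*}
Integrating by parts gives
\begin{align*}
\sum_{\substack{a\in\mA\\ a\ge y}}\frac1a=\lim_{T\to\infty}\frac{\mA(T)}{T}-\frac{\mA(y^-)}{y}+\int_y^\infty\frac{\mA(t)}{t^2}\,dt.
\end{align*}
Since $\theta\le 1$, the hypothesis $\mA(T)\ll T^\theta\log^{-A}T$ forces $\mA(T)/T\to 0$ (when $\theta=1$ the logarithmic saving handles the boundary), so the boundary term at infinity vanishes, and the negative boundary term at $y$ only helps. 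Thus it suffices to bound the integral $\int_y^\infty \mA(t)t^{-2}\,dt$.

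Next I would substitute the hypothesis into the integral. For $t\ge y\ge x$ we have $\mA(t)\ll t^\theta\log^{-A}t$, hence
\begin{align*}
\int_y^\infty\frac{\mA(t)}{t^2}\,dt\ll\int_y^\infty t^{\theta-2}\log^{-A}t\,dt.
\end{align*}
When $\theta<1$ the exponent $\theta-2<-1$ makes the integral converge; bounding $\log^{-A}t\le\log^{-A}y$ on the range and integrating $t^{\theta-2}$ yields a main contribution of size $\ll y^{\theta-1}\log^{-A}y=y^{-(1-\theta)}\log^{-A}y$, as claimed. The borderline case $\theta=1$ is where a little care is needed: there the integrand is $\ll t^{-1}\log^{-A}t$, whose antiderivative is $\ll\log^{-(A-1)}y$; replacing $A$ by $A+1$ in the hypothesis (permissible since $A$ is an arbitrary large constant that may vary from line to line, per the stated convention) recovers the stated bound $y^{-(1-\theta)}\log^{-A}y=\log^{-A}y$.

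I do not expect a genuine obstacle here, as this is a standard partial-summation estimate; the only subtlety is the $\theta=1$ endpoint, where one must extract the logarithmic decay from the $\log^{-A}$ factor rather than from a power of $t$, and confirm that the boundary term at infinity still vanishes. Keeping track that the hypothesis holds uniformly for all $y\ge x$ is what makes the final bound uniform in $y$.
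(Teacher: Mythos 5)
Your proof is correct, but it takes a different route from the paper: you use Abel/partial summation (writing the tail as a Riemann--Stieltjes integral, integrating by parts, and bounding $\int_y^\infty \mA(t)t^{-2}\,dt$), whereas the paper uses a dyadic decomposition of $[y,\infty)$ into blocks $[M,2M]$, bounds each block by $M^{-1}|\mA\cap[M,2M]|\ll M^{-(1-\theta)}\log^{-A}M$, and sums the resulting geometric-type series $\sum_k 2^{-k(1-\theta)}\bigl(1+k\log 2/\log M\bigr)^{-A}$. The two arguments are essentially equivalent in strength and both hinge on the same two points: the hypothesis holds uniformly at every scale above $y$, and the endpoint $\theta=1$ loses exactly one power of $\log$ (your integral gives $\log^{-(A-1)}y$; the paper's $k$-sum contributes an extra factor $\ll\log M$), which both you and the paper absorb via the convention that $A$ may vary from line to line. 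Your version has the minor virtue of making the boundary terms and the $\theta=1$ bookkeeping fully explicit; the paper's dyadic version avoids any discussion of the Stieltjes boundary term at infinity. Either is acceptable.
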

\begin{proof}
In virtue of the dyadic partition method, we can split the segment $[y,\infty)$ into small intervals of forms $[M,2M]$ with $ M\ge y$. First, we deduce an upper bound for the sum over $[M,2M]$ by
\begin{align*}
\sum_{a\in \mA\cap[M,2M] }\frac{1}{a}\ll M^{-1}|\mA\cap[M,2M]|\ll M^{-(1-\theta)} \log^{-A} M.
\end{align*}
By summing over all small intervals, we have
\begin{align*}
\sum_{M\le a\in \mA}\frac{1}{a}\ll M^{-(1-\theta)}\log^{-A}M\sum_{k=0}^\infty 2^{-k(1-\theta)}\left(1+\frac{k\log 2}{\log M}\right)^{-A}.
\end{align*}
The convergence of the $k$-sum is obvious when $\theta<1$. For $\theta=1$,
\[
\sum_{k=0}^\infty \left(1+\frac{k\log 2}{\log M}\right)^{-A}\ll \log M.
\]
This establishes the lemma.
\end{proof}

\begin{lemma}\label{lemgamma}
The sets $\mA^i$, for $i=1,2$, are defined as in \eqref{eqdefA}, satisfying
\[
|\mA^i(x)|\ll x^{\theta_i}\log^{-A}x
\]
with $\theta_i\in (0,1)$. Let
\begin{align}\label{eqgamma}
\gammaup_i(d)=\begin{cases}
		\prod\limits_{g\mid \rho_i(d)}\frac{\nu_g(\mH_i)}{g}, & \text{if}\ d\mid a\ \text{for some} \ a\in\mA^i,\\
		0, & \text{otherwise}.
\end{cases}
\end{align}
We have
\[
\sum_{d} d\gammaup_1(d)\gammaup_2(d)\ll 1.
\]
\end{lemma}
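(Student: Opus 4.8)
The plan is to exploit the pairwise coprimality of the elements of each $\mG_i$ to obtain a clean divisor structure, to reduce the product weight $\gammaup_1(d)\gammaup_2(d)$ to a single weight by estimating one factor trivially, and then to sum the survivor against the counting hypothesis on $\mA^1$. First I would record the relevant structural facts. Since $\gammaup_i(d)=0$ unless $d$ divides some $a\in\mA^i$, the sum effectively runs over such $d$ only. For these $d$, writing $a=g_1\cdots g_s$ with distinct, hence pairwise coprime, $g_j\in\mG_i$, each prime power $p^{v_p(d)}\,\|\,d$ divides the unique $g_j$ containing $p$, and that $g_j$ satisfies $(g_j,d)>1$, so $g_j\mid\rho_i(d)$. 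Consequently $d\mid\rho_i(d)$, the integer $\rho_i(d)$ itself lies in $\mA^i$, and the set of $g\in\mG_i$ dividing $\rho_i(d)$ is exactly $\{g\in\mG_i:(g,d)>1\}$, whence
\[
\gammaup_i(d)=\prod_{\substack{g\in\mG_i\\(g,d)>1}}\frac{\nu_g(\mH_i)}{g}=\frac{1}{\rho_i(d)}\prod_{g\mid\rho_i(d)}\nu_g(\mH_i).
\]

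Next I would establish the trivial bound $\gammaup_i(d)\ll d^{-1+\ve}$. Each factor $g$ occurring in $\rho_i(d)$ contains at least one prime divisor of $d$, and distinct $g$ use disjoint primes, so the number of such factors is at most $\omega(d)$, the number of distinct prime divisors of $d$. As $\nu_g(\mH_i)\le|\mH_i|$ is bounded by a constant, we get $\prod_{g\mid\rho_i(d)}\nu_g(\mH_i)\le|\mH_i|^{\omega(d)}\ll d^{\ve}$; combined with $\rho_i(d)\ge d$ this yields $\gammaup_i(d)\le d^{\ve}/\rho_i(d)\le d^{-1+\ve}$. Applying this to the factor $\gammaup_2$, and using that every term is nonnegative, I would reduce to a single sum,
\[
\sum_d d\,\gammaup_1(d)\gammaup_2(d)\ll\sum_d d^{\ve}\gammaup_1(d).
\]

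I would then group the surviving $d$ according to $a:=\rho_1(d)\in\mA^1$, which partitions the support of $\gammaup_1$. For fixed $a$ every contributing $d$ satisfies $d\mid a$ and $\gammaup_1(d)=\Gamma_1(a):=\prod_{g\mid a}\nu_g(\mH_1)/g$, so that $\sum_{d:\,\rho_1(d)=a}d^{\ve}\le\sum_{d\mid a}d^{\ve}\ll a^{\ve}$. Since $\Gamma_1(a)=a^{-1}\prod_{g\mid a}\nu_g(\mH_1)\ll a^{-1+\ve}$ exactly as above, the whole sum is majorised by $\sum_{a\in\mA^1}a^{-1+\ve}$. Finally, the convergence $\sum_{a\in\mA^1}a^{-1+\ve}\ll1$ follows from the hypothesis $|\mA^1(x)|\ll x^{\theta_1}\log^{-A}x$ with $\theta_1<1$ by a dyadic decomposition exactly in the spirit of Lemma \ref{lemsumtau}: on $[M,2M]$ the contribution is $\ll M^{-1+\ve}|\mA^1\cap[M,2M]|\ll M^{\theta_1-1+\ve}$, and the geometric sum over dyadic scales $M$ converges because $\theta_1-1+\ve<0$ for small $\ve$.

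The delicate point, and the reason the estimate is not completely formal, is precisely the allocation of the trivial bound: applying $\gammaup_i(d)\ll d^{-1+\ve}$ to \emph{both} factors leaves $\sum_d d^{-1+\ve}$, which diverges. One must therefore retain the genuine arithmetic of one weight and convert the corresponding sum into a sum over the sparse set $\mA^1$, where the sub-linear counting exponent $\theta_1<1$ finally furnishes convergence; the roles of $\mA^1$ and $\mA^2$ are interchangeable, so either hypothesis $\theta_1<1$ or $\theta_2<1$ suffices for the argument.
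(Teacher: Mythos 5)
Your argument is correct and is essentially the paper's own proof, just written out in full: the paper likewise bounds one factor trivially by $\gammaup_2(d)\ll d^{-1+\ve}$, keeps $\gammaup_1(d)\ll\rho_1(d)^{-1+\ve}$, converts $\sum_d\rho_1(d)^{-1+\ve}$ into $\sum_{a\in\mA^1}a^{-1+\ve}\tau(a)$ by grouping the $d$ with $\rho_1(d)=a$, and concludes via Lemma \ref{lemsumtau}. Your observation that applying the trivial bound to both factors would diverge correctly identifies why the asymmetric allocation is needed.
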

\begin{proof}
By the definition, there is
\[
\gammaup_i(d)\ll \rho_i(d)^{-1+\ve}\ll d^{-1+\ve}.
\]
Thus, we have
\[
\sum_{d} d\gammaup_1(d)\gammaup_2(d)\ll \sum_{d}\rho_1(d)^{-1+\ve}\ll \sum_{a\in\mA^1}a^{-1+\ve}\tau(a)\ll 1
\]
by Lemma \ref{lemsumtau}.
\end{proof}
\subsection{Estimates for binomial distribution}
We first present some properties for the binomial \emph{pmf}, which turns out to be a unimodal function decaying rapidly beyond the maximum.
\begin{lemma}\label{lemBio}
Let $\alpha\in (0,1)$, $n\in\mathbb{N}$, and $l_0=\lfloor\alpha(n+1)\rfloor$. 
\begin{enumerate}
  \item  $\binom nl \alpha^l (1-\alpha)^{n-l}\ll n^{-\frac12}$ holds uniformly for $0\le l\le n$;
  \item  $\binom nl \alpha^l (1-\alpha)^{n-l}$ is a unimodal function of $l$, and its maximum is taken at $l_0$;
  \item  For $|l-l_0|\ge\sqrt n \log n$, there is
  \[
  \binom nl \alpha^l (1-\alpha)^{n-l}\ll 2^{-\log^2 n}.
  \]
\end{enumerate}
\end{lemma}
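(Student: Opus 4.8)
The plan is to treat $b(l):=\binom nl \alpha^l (1-\alpha)^{n-l}$ as the probability mass function of a binomial random variable and to extract all three assertions from the ratio of consecutive terms, one application of Stirling's formula, and one concentration estimate. I would prove part (2) first, since it underlies the other two. A direct computation gives
\[
\frac{b(l+1)}{b(l)}=\frac{\binom{n}{l+1}}{\binom nl}\cdot\frac{\alpha}{1-\alpha}=\frac{(n-l)\alpha}{(l+1)(1-\alpha)},
\]
and this ratio is $\ge 1$ exactly when $l+1\le\alpha(n+1)$. Hence $b$ is nondecreasing up to $l=l_0=\lfloor\alpha(n+1)\rfloor$ and nonincreasing afterwards; moreover the ratio is strictly decreasing in $l$, so there is a single peak. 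This proves unimodality and locates the maximum at $l_0$.

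For part (1), by part (2) it suffices to bound the peak value $b(l_0)$. Since $\alpha$ is fixed in $(0,1)$ we have $l_0=\alpha n+O(1)$ and $n-l_0=(1-\alpha)n+O(1)$, so all three of $n$, $l_0$, $n-l_0$ grow linearly in $n$ and Stirling's formula applies to each factorial in $\binom{n}{l_0}$. Collecting the exponential parts leaves the product $\left(\frac{\alpha n}{l_0}\right)^{l_0}\left(\frac{(1-\alpha)n}{n-l_0}\right)^{n-l_0}$, which is of the shape $(1+O(1/n))^{O(n)}=O(1)$, while the polynomial prefactor contributes $\sqrt{\frac{n}{2\pi\,l_0(n-l_0)}}\asymp n^{-1/2}$. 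Multiplying by $\alpha^{l_0}(1-\alpha)^{n-l_0}$ and simplifying yields $b(l_0)\ll n^{-1/2}$, which is the claim.

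For part (3), I would read $b(l)=\Pr(X=l)$, where $X=W_1+\cdots+W_n$ is the sum of the Bernoulli$(\alpha)$ variables, so that $\mathbb{E}X=\alpha n$. The hypothesis $|l-l_0|\ge\sqrt n\log n$ together with $l_0=\alpha n+O(1)$ forces $|l-\alpha n|\ge(1-o(1))\sqrt n\log n$. Since a single mass is dominated by the tail it sits in, $b(l)\le\Pr\!\big(|X-\alpha n|\ge|l-\alpha n|\big)$, and Hoeffding's inequality bounds this by $2\exp(-2|l-\alpha n|^2/n)\le 2\exp\!\big(-(2-o(1))\log^2 n\big)$. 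Because $2>\log 2$, this is $\ll 2^{-\log^2 n}$ once $n$ is large, and small $n$ are absorbed into the implied constant.

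The routine parts are (1) and (2); the only point needing care is in (3), namely ensuring the constant in the exponent genuinely beats $\log 2$. This is where one checks that the $O(1)$ gap between $l_0$ and $\alpha n$ is negligible and where one uses that Hoeffding's constant $2$ exceeds $\log 2$ (equivalently that $\alpha(1-\alpha)\le 1/4$ in the Gaussian heuristic). If one prefers a self-contained argument, the same decay follows by telescoping the consecutive ratios from part (2) starting from the peak bound of part (1): this produces a factor $\exp(-|l-l_0|^2/(2\alpha(1-\alpha)n))$, which again gives the stated super-polynomial decay.
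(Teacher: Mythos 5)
Your proof is correct, and for part (3) it takes a genuinely different route from the paper. For parts (1) and (2) the paper gives essentially no argument --- it cites the local central limit theorem for the uniform bound and calls unimodality well-known --- so your Stirling computation at the peak and your consecutive-ratio test (correctly solved to place the mode at $\lfloor\alpha(n+1)\rfloor$) are just the standard proofs made explicit; both are fine, and your observation that part (1) reduces to bounding the peak via part (2) is exactly the right economy. For part (3), writing $b(l)=\binom nl\alpha^l(1-\alpha)^{n-l}$, your main argument dominates the single mass by the tail, $b(l)\le\Pr\bigl(|X-\alpha n|\ge|l-\alpha n|\bigr)$, and applies Hoeffding to get $2\exp\bigl(-(2-o(1))\log^2 n\bigr)$; your check that the Hoeffding constant $2$ exceeds $\log 2$ is the one point that needs saying and you say it. The paper instead bounds the ratio $b(l)/b(l_0)$ by telescoping the consecutive ratios: it rewrites $\alpha^{-h}(1-\alpha)^{h}$ as $((1-\alpha)n)^{h}/(\alpha n)^{h}$, pairs the resulting factors against $l_0(l_0-1)\cdots(l_0-h+1)$ and $(n-l_0+1)\cdots(n-l_0+h)$, and extracts $2^{-\log^2 n/(4\alpha)}$ and $2^{-\log^2 n/(4(1-\alpha))}$ from the two halves before multiplying by the peak bound from (1); this is precisely the self-contained alternative you sketch in your last sentence. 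Your version is shorter because it outsources the combinatorics to a standard concentration inequality; the paper's version is elementary and keeps everything inside the binomial coefficients. Both give the stated super-polynomial decay with room to spare, so there is no gap.
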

\begin{proof}
The first property is a direct result of the local central limit theorem (see, for instance \cite{DR10}), and the second property is well-known. So we only provide proof for property (3), where we lay focus on the case $l<l_0$, and the other case is identical. Let
\[
h=l_0-l\ge \sqrt n \log n.
\]
Consider the ratio of values taken at $l$ and $l_0$, that is
\begin{align}\label{eqratio}
\frac{\binom nl \alpha^l (1-\alpha)^{n-l}}{\binom n{l_0} \alpha^{l_0} (1-\alpha)^{n-l_0}}=\frac{l_0(l_0-1)\cdots (l_0-h+1)}{(n-l_0+1)(n-l_0+2)\cdots (n-l_0+h)}\alpha^{-h}(1-\alpha)^{h}.
\end{align}
Observe that
\[
\alpha n-1\le l_0\le \alpha n+1,
\]
and we have
\[
\frac{l_0(l_0-1)\cdots (l_0-h+1)}{(\alpha n)^h}\ll \left(1-\frac{\sqrt n\log n}{2\alpha n}\right)^{\frac{\sqrt n\log n}2} \ll 2^{-\frac{\log^2 n}{4\alpha }},
\]
\[
\frac{((1-\alpha) n)^h}{(n-l_0+1)(n-l_0+2)\cdots (n-l_0+h)}\ll \left(1+\frac{\sqrt n\log n}{2(1-\alpha) n}\right)^{-\frac{\sqrt n\log n}2}\ll 2^{-\frac{\log^2 n}{4(1-\alpha)}}.
\]
Applying this into \eqref{eqratio} gives
\[
\frac{\binom nl \alpha^l (1-\alpha)^{n-l}}{\binom n{l_0} \alpha^{l_0} (1-\alpha)^{n-l_0}}\ll 2^{-\log^2 n}
\]
for $h\ge\sqrt{n} \log n$.
This, in combination with the uniform upper bound in property (1), establishes property (3).
\end{proof}

We also need an asymptotic formula for the sum of the binomial \emph{pmf} on an arithmetic sequence.
\begin{lemma}\label{lemaverage}
	For any $0<\alpha<1$ and $n\in\mathbb{N}$, then for any integer $a \ge 1$ and any $v\in\{0,1,\cdots,a-1\}$, we have
	\begin{align*}
		\left\lvert \sum_{l\equiv v (\bmod a)} \binom nl \alpha^l (1-\alpha)^{n-l}- \frac 1a \right\rvert = O \left(n^{-\frac12}\right).
	\end{align*}
\end{lemma}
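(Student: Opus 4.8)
The plan is to deduce the estimate from the two structural facts already established in Lemma \ref{lemBio} --- unimodality and the uniform pointwise bound --- rather than to manipulate the binomial sum directly. Write $b(l)=\binom nl\alpha^l(1-\alpha)^{n-l}$, extended by $b(l)=0$ for $l<0$ or $l>n$, and set $B_v=\sum_{l\equiv v\ (\bmod a)}b(l)$ for $v\in\{0,\dots,a-1\}$. Since $\sum_{l=0}^n b(l)=1$, the residue-class masses satisfy $\sum_{v=0}^{a-1}B_v=1$, so their average is exactly $1/a$. Hence it suffices to show that the $B_v$ are mutually close: from
\[
\Bigl|B_v-\tfrac1a\Bigr|=\Bigl|\tfrac1a\sum_{v'=0}^{a-1}(B_v-B_{v'})\Bigr|\le \max_{v'}|B_v-B_{v'}|,
\]
the problem reduces to bounding $\max_{v,v'}|B_v-B_{v'}|$.

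The key step is to control one difference $B_v-B_{v+1}$ by the total variation of the sequence $b$. Writing $B_v-B_{v+1}=\sum_{k}\bigl(b(v+ka)-b(v+ka+1)\bigr)$ and pairing each index $l\equiv v$ with its successor $l+1\equiv v+1$, one obtains $|B_v-B_{v+1}|\le\sum_{l\equiv v\ (\bmod a)}|b(l)-b(l+1)|$. Telescoping along a chain of residues gives, for any $v,v'$,
\[
|B_v-B_{v'}|\le\sum_{j=0}^{a-1}|B_j-B_{j+1}|\le\sum_{l\in\mathbb Z}|b(l)-b(l+1)|,
\]
since the classes $l\equiv j\ (\bmod a)$, $j=0,\dots,a-1$, partition $\mathbb Z$ and each consecutive pair $(l,l+1)$ is counted exactly once. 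Now the crucial simplification: because $b$ is unimodal by Lemma \ref{lemBio}(2), increasing up to $l_0$ and decreasing afterward, and because the extension makes it vanish at both ends, the total variation collapses to
\[
\sum_{l\in\mathbb Z}|b(l)-b(l+1)|=2\,b(l_0).
\]

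Combining these three displays with the uniform bound $b(l_0)\ll n^{-1/2}$ of Lemma \ref{lemBio}(1) (whose implied constant depends only on $\alpha$) yields $|B_v-1/a|\le 2\,b(l_0)\ll n^{-1/2}$, uniformly in $a\ge1$ and in $v$, which is the assertion. I expect the only delicate point to be the bookkeeping in the telescoping step: verifying that summing the per-class variations over all residues $j=0,\dots,a-1$ reconstructs precisely the global total variation $\sum_l|b(l)-b(l+1)|$ with no over- or undercounting, and that the convention $b(l)=0$ outside $[0,n]$ renders the boundary contributions harmless (so that the variation is \emph{exactly} $2b(l_0)$). Everything else is a direct appeal to the lemma just proved, and the argument is automatically uniform in $a$ and $v$. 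As an alternative one could instead apply the roots-of-unity filter, reducing $B_v-1/a$ to $\frac1a\sum_{j=1}^{a-1}\omega^{-jv}(1-\alpha+\alpha\omega^j)^n$ with $\omega=e^{2\pi i/a}$ and bounding $|1-\alpha+\alpha\omega^j|^n=(1-4\alpha(1-\alpha)\sin^2(\pi j/a))^{n/2}$ by a Gaussian-type sum; but the unimodality route is shorter and reuses exactly Lemma \ref{lemBio}.
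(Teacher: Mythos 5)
Your argument is correct, and it is genuinely different from what the paper does: the paper disposes of the case $a>n$ trivially and for $a\le n$ simply cites \cite[Lemma 2.1]{CFF19}, whereas you give a self-contained proof. Each step of yours checks out: since $\sum_{v}B_v=1$ the average of the residue-class masses is exactly $1/a$, so $|B_v-1/a|\le\max_{v'}|B_v-B_{v'}|$; the shift $l\mapsto l+1$ is a bijection from the class $v$ to the class $v+1$, giving $B_v-B_{v+1}=\sum_{l\equiv v}(b(l)-b(l+1))$; the classes partition $\mathbb Z$, so summing over $j=0,\dots,a-1$ recovers the total variation with each consecutive pair counted once; and for a sequence that is nondecreasing up to $l_0$, nonincreasing thereafter, and vanishing at $\pm\infty$, the two halves of the variation each telescope to $b(l_0)$, giving exactly $2b(l_0)\ll n^{-1/2}$ by Lemma \ref{lemBio}(1). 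What your route buys is a single uniform argument for all $a\ge1$ (no case split at $a=n$), an explicit constant $|B_v-1/a|\le 2\max_l b(l)$, and a proof that reuses only the two facts already established in Lemma \ref{lemBio}; what the paper's route buys is brevity, at the cost of importing the (roots-of-unity/direct) argument of \cite{CFF19}. The only cosmetic caveat is that you should state explicitly that the sum defining $B_v$ runs over all $l\in\mathbb Z$ in the residue class (harmless, since $b$ vanishes outside $[0,n]$), so that the shift bijection and the partition bookkeeping are literal identities.
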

\begin{proof}
For $a> n$, it is obvious from the uniform upper bound of \emph{pmf}, and for $a\le n$, one may refer to \cite[Lemma 2.1]{CFF19} for a proof.
\end{proof}

\subsection{Second-moment method in probability}
The study of the density of a sieved set in random walks, such as Theorems \ref{thmsmallp}, \ref{thm2}, \ref{thm3}, and \ref{thm5}, is based on a special second-moment method in probability, by which we can transform the problem into calculating the expectation and the variance.
We present the special second-moment method in the following lemma, which is a generalization of \cite[Lemma 2.6]{CFF19}.
\begin{lemma}\label{lemsmm}
	Let $W_i,\ i=1,2,\cdots$ be a sequence of uniformly bounded random variables, and let $\overline{S}_n=(W_1+W_2+\cdots+W_n)/n$. If $\mathbb{E}(\overline{S}_n)\gg \log^{-B} n$, $\mathbb{V}(\overline{S}_n)=O\left(\log^{-C}n\right)$ for some positive constants $B, C$, satisfying $C>3B+3$, and
\begin{align}\label{eqsm1}
\lim_{n\rightarrow\infty}\frac{\mathbb{E}(\overline{S}_n)-\mathbb{E}(\overline{S}_{n-n'})}{\mathbb{E}(\overline{S}_n)}=0 \ \ \text{holds uniformly for}\ \ n'\ll n \log^{-B} n,
\end{align}
then we have
	\begin{align*}
		\lim_{n\rightarrow\infty}\overline{S}_n /\mathbb{E} (\overline{S}_n)=1
	\end{align*}
	almost surely. In particular, if
\[
\lim_{n\rightarrow\infty}\mathbb{E}(\overline{S}_n)=\delta\neq0,
\]
we have
\[
\lim_{n\rightarrow\infty}\overline{S}_n=\delta
\]
almost surely.
\end{lemma}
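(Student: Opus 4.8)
The plan is to run the classical second-moment argument: establish almost-sure convergence along a carefully chosen sparse subsequence via Chebyshev's inequality and the Borel--Cantelli lemma, and then fill the gaps deterministically using the uniform boundedness of the $W_i$ together with the continuity hypothesis \eqref{eqsm1}. Throughout, write $M$ for a uniform bound $|W_i|\le M$, fix $\epsilon>0$ (ultimately ranging over $\epsilon=1/m$, $m\in\mathbb{N}$, and intersecting the resulting full-measure events), and abbreviate $e_n=\mathbb{E}(\overline{S}_n)$.

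First I would apply Chebyshev's inequality to obtain
\[
\mathbb{P}\left(\left|\overline{S}_n-e_n\right|\ge \epsilon e_n\right)\le \frac{\mathbb{V}(\overline{S}_n)}{\epsilon^2 e_n^2}\ll \frac{1}{\epsilon^2}\,\log^{-(C-2B)}n,
\]
using $e_n\gg\log^{-B}n$ and $\mathbb{V}(\overline{S}_n)=O(\log^{-C}n)$. Since $\log^{-(C-2B)}n$ is not summable over all $n$, I would pass to a subsequence $(n_k)$ chosen to grow just slowly enough that the interpolation below is harmless yet fast enough for Borel--Cantelli. Concretely, take $n_k$ with gaps $n_{k+1}-n_k\asymp n_k\log^{-(B+1)}n_k$, so that $\log n_k\asymp k^{1/(B+2)}$; then $\sum_k\log^{-(C-2B)}n_k\asymp\sum_k k^{-(C-2B)/(B+2)}$ converges because $C>3B+3$ forces $(C-2B)/(B+2)>1$ with room to spare. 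Borel--Cantelli then yields that, almost surely, $|\overline{S}_{n_k}-e_{n_k}|\le\epsilon\,e_{n_k}$ for all large $k$, i.e. $\overline{S}_{n_k}/e_{n_k}\to1$ along the subsequence.

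Next I would interpolate. For $n_k\le n\le n_{k+1}$, writing $T_n=\sum_{i\le n}W_i$ and splitting $T_n/n-T_{n_k}/n_k$, the uniform bound $|W_i|\le M$ gives the deterministic estimate $|\overline{S}_n-\overline{S}_{n_k}|\le 2M(n_{k+1}-n_k)/n_k\ll\log^{-(B+1)}n_k=o(e_n)$, since $e_n\gg\log^{-B}n$ and $n\asymp n_k$. Because the gap satisfies $n-n_k\le n_{k+1}-n_k\ll n_k\log^{-B}n_k\asymp n\log^{-B}n$, the continuity hypothesis \eqref{eqsm1} applies with $n'=n-n_k$ and gives $|e_n-e_{n_k}|=o(e_n)$. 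Combining these two estimates with the subsequential convergence via the triangle inequality
\[
\left|\frac{\overline{S}_n}{e_n}-1\right|\le \frac{|\overline{S}_n-\overline{S}_{n_k}|}{e_n}+\frac{e_{n_k}}{e_n}\cdot\frac{|\overline{S}_{n_k}-e_{n_k}|}{e_{n_k}}+\frac{|e_{n_k}-e_n|}{e_n}
\]
shows that $\limsup_n|\overline{S}_n/e_n-1|\le \epsilon$ almost surely; letting $\epsilon=1/m\to0$ and intersecting gives $\overline{S}_n/e_n\to1$ almost surely. The ``in particular'' statement is then immediate: if $e_n\to\delta\ne0$ then $\overline{S}_n=(\overline{S}_n/e_n)\,e_n\to\delta$ almost surely.

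The main obstacle is the subsequence construction, which must satisfy two constraints pulling in opposite directions: the gaps $n_{k+1}-n_k$ must be small enough (relative to $n_k$) that both the deterministic interpolation error and the expectation drift are $o(e_n)$---which forces gaps $\ll n_k\log^{-B}n_k$ so that \eqref{eqsm1} is usable---while the $n_k$ must be sparse enough that the Chebyshev probabilities are summable for Borel--Cantelli. The quantitative hypothesis $C>3B+3$ is precisely what opens a nonempty window for this trade-off; verifying that the chosen rate lands in this window, and that normalizing by the moving target $e_n$ rather than a constant is legitimate, is the only genuinely delicate point, the remaining estimates being routine.
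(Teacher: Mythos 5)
Your proposal is correct and follows essentially the same route as the paper: Chebyshev plus Borel--Cantelli along a sparse subsequence of the form $n_k\approx\exp(ck^{1/(B+2)})$ (the paper takes $k(m)=\exp(m^\sigma)$ with $3/C<\sigma<1/(B+1)$), then deterministic interpolation via uniform boundedness and the expectation-drift hypothesis \eqref{eqsm1}. The only cosmetic difference is that you use a relative threshold $\epsilon\,e_{n_k}$ in Chebyshev where the paper uses the absolute threshold $1/m$, which is why your summability condition comes out as $C>3B+2$ rather than exactly $C>3B+3$; both are covered by the stated hypothesis.
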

\begin{proof}
Let $\sigma$ be a constant satisfying $3/C<\sigma<1/(B+1)$ and $k(m)=\exp\left(m^\sigma\right)$. For any large $n$, there is a unique $m$ such that $k(m)\le n<k(m+1)$, and it is easy to see that
\begin{align}\label{+46}
n-k(m)\le \exp(m^\sigma)\left(\exp((m+1)^{\sigma}-m^\sigma)-1\right)\ll nm^{\sigma-1}\ll n\log^{-B}n.
\end{align}
Since \begin{align*}
\mathbb{V}(\overline{S}_{k(m)})\ll \log^{-C}k(m)\ll m^{-\sigma C},
\end{align*}
we have, by the Chebyshev inequality, that
\[
\sum_{m=1}^\infty\mathbb{P}\left(\left|\ol{S}_{k(m)}-\mathbb{E}(\ol{S}_{k(m)})\ge\frac1{m}\right|\right)\le\sum_{m=1}^\infty m^2\mathbb{V}(\ol{S}_{k(m)})\ll\sum_{m=1}^\infty m^{-\sigma C+2}<\infty.
\]
By this, we have from the Borel-Cantelli lemma (see \cite[Section 2.3]{DR10}) that
\begin{align}\label{eqsm2}
\mathop{\lim\sup}_{m\rightarrow\infty}m\left|\ol{S}_{k(m)}-\mathbb{E}(\ol{S}_{k(m)})\right|\le 1
\end{align}
almost surely. To extend this for any $n$, we also need the following simple fact: if $(x_i)_{i\ge1}$ is a numerical sequence satisfying $|x_i|\le 1$, then for any $s\le t$,
\[
\left|\frac1t\sum_{i\le t}x_i-\frac1s\sum_{i\le s}x_i\right|\le 2\left(1-\frac st\right).
\]
Hence, we have that
\[
\ol{S}_{n}-\ol{S}_{k(m)}\ll 1-\frac{k(m)}{n}\ll m^{\sigma-1}
\]
by \eqref{+46}. This, in combination with \eqref{eqsm2}, gives
\begin{align*}
\overline{S}_n -\mathbb{E} (\overline{S}_n)&\le\left|\ol{S}_{n}-\ol{S}_{k(m)}\right|+\left|\ol{S}_{k(m)}-\mathbb{E}(\ol{S}_{k(m)})\right| +\left|\mathbb{E}(\overline{S}_{k(m)})-\mathbb{E}(\overline{S}_{n})\right|\\
&\ll m^{\sigma-1}+\left|\mathbb{E}(\overline{S}_{k(m)})-\mathbb{E}(\overline{S}_{n})\right|
\end{align*}
almost surely. Since $m\sim \log^{\frac1\sigma}n$ with $\sigma<1/(B+1)$, there is $m^{\sigma-1}=o(\mathbb{E} (\overline{S}_n))$. Thus, by \eqref{eqsm1} and \eqref{+46}, we have that
\[
\lim_{n\rightarrow\infty}\frac{\overline{S}_n -\mathbb{E} (\overline{S}_n)}{\mathbb{E} (\overline{S}_n)}=0
\]
almost surely, establishing the lemma.
\end{proof}

\section{Binomial \emph{pmf} among arithmetic progressions}
This section is devoted to the proof of Theorems \ref{thmone} and \ref{thmaverageB}. We note that the binomial \emph{pmf} mainly takes values in a short segment, and our argument relies on the study of arithmetic progressions falling into small intervals.
\subsection{Arithmetic progressions falling into small intevals}
\begin{lemma}\label{thmn}
For each $n\in \mathbb{N}$, denote by $I_n$ an interval satisfying $n\in I_n$ and $|I_n|\ll n^\eta\log n$ with $\eta\in(0,1)$. Let $\mA$ be a set of integers satisfying $|\mA(x)|\ll x^{\theta}\log^{-A}x$ for sufficiently large $x$, with $\theta\in(0,1]$. Then for any $\delta\in(0,1)$, we have that
\[
\sum_{x'\le n\le x}\sum_{n^{\delta }\le a\in \mA }\sum_{l\equiv v(\bmod a)}1_{I_n}(l)\ll x^{1+\eta-(1-\theta)\delta}\log^{-A} x
\]
holds uniformly in $x'\in(x^\ve,x)$ and $v\in(-Ax,x'/2)$.
\end{lemma}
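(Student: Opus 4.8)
The plan is to read the triple sum as the number of integer triples $(n,a,l)$ with $x'\le n\le x$, $a\in\mA$, $a\ge n^\delta$, $l\in I_n$, and $l\equiv v\pmod a$, and then to count these triples after reparametrising the congruence. Since $n\in I_n$ and $l\in I_n$ with $|I_n|\ll n^\eta\log n=o(n)$ (because $\eta<1$ and $n\ge x'>x^\ve$), every admissible $l$ satisfies $l\asymp n$; in particular $l\gg x'$ while $v<x'/2$, so $l-v$ is a positive multiple of $a$ and we may write $l=v+ab$ with an integer $b\ge1$. Rewriting $a\ge n^\delta$ as $n\le a^{1/\delta}$, it then suffices to bound
\[
S:=\#\{(n,a,b):\ x'\le n\le \min(x,a^{1/\delta}),\ a\in\mA,\ b\ge1,\ v+ab\in I_n\}.
\]
Because $v+ab\asymp n\ll x$ forces $a\ll x$, the outer variable $a$ runs over $\mA\cap[(x')^\delta,Cx]$; every such $a$ exceeds $x^{\ve\delta}$, so the density hypothesis $|\mA(y)|\ll y^\theta\log^{-A}y$ and Lemma~\ref{lemsumtau} apply throughout.

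Next I would fix $a$ and count the pairs $(b,n)$. For a fixed $b$ the point $l=v+ab$ is determined, and any $n$ with $l\in I_n$ obeys $|n-l|\le|I_n|\ll \min(x,a^{1/\delta})^\eta\log x$, so there are at most $\ll \min(x,a^{1/\delta})^\eta\log x$ admissible $n$. On the other hand, as $b$ varies the admissible $l=v+ab$ lie in an interval of length $\ll\min(x,a^{1/\delta})$, so the number of admissible $b$ is $\ll\min(x,a^{1/\delta})/a+1$. Multiplying gives, for each fixed $a$,
\[
\#\{(b,n):\ v+ab\in I_n,\ x'\le n\le\min(x,a^{1/\delta})\}\ll\Big(\frac{\min(x,a^{1/\delta})}{a}+1\Big)\min(x,a^{1/\delta})^\eta\log x.
\]
The decisive point is that here the spurious $+1$ sits on the $b$-count, where it costs only the factor $\min(x,a^{1/\delta})^\eta\le x^\eta$; the cruder estimate $\#\{l\in I_n:\ l\equiv v\pmod a\}\le|I_n|/a+1$ summed over $n$ would instead put the $+1$ on the $n$-count and cost a factor $\asymp x$, which is too lossy.

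Finally I would split the $a$-sum at $x^\delta$. For $(x')^\delta\le a\le x^\delta$ one has $\min(x,a^{1/\delta})=a^{1/\delta}$, and the per-$a$ bound becomes $\ll(a^{(1+\eta)/\delta-1}+a^{\eta/\delta})\log x$; summing each power against $|\mA(y)|\ll y^\theta\log^{-A}y$ by partial summation (both exponents increased by $\theta$ are positive, so the top of the range dominates) produces $\ll x^{1+\eta-\delta(1-\theta)}\log^{1-A}x$ from the first power and $\ll x^{\eta+\delta\theta}\log^{-A}x$ from the second; since $\delta<1$ one has $\eta+\delta\theta\le 1+\eta-\delta(1-\theta)$, so both stay within the target. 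For $x^\delta<a\ll x$ one has $\min(x,a^{1/\delta})=x$ and the per-$a$ bound is $\ll x^{1+\eta}a^{-1}\log x$; here $\sum_{x^\delta<a\in\mA}1/a\ll x^{-\delta(1-\theta)}\log^{-A}x$ by Lemma~\ref{lemsumtau} again yields $\ll x^{1+\eta-\delta(1-\theta)}\log^{1-A}x$. Absorbing the stray logarithm into the (arbitrarily large, line-dependent) constant $A$ delivers the stated bound, uniformly in $x'$ and $v$, the latter entering only as a translation in the $b$-count. I expect the genuine obstacle to be exactly this large-$a$ regime: one must resist the naive $|I_n|/a+1$ bound and instead count the solutions $(b,n)$ directly, so that the sparsity of $\mA$---via the cut-off $a\ge n^\delta$ and Lemma~\ref{lemsumtau}---can be brought to bear.
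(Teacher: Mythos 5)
Your proof is correct, and it takes a genuinely different route from the paper's. The paper first decomposes $[x',x]$ dyadically into blocks $[N,2N]$, merges the intervals $I_n$ for $n\sim N$ into a single interval $I^N$ of length $\asymp N$ (paying the factor $N^\eta\log N$ for the multiplicity), then artificially \emph{extends} $I^N$ to an interval $I$ of optimized length $|I|=\max\{x^{1/(2-\theta)}N^{(1-\theta)\delta/(2-\theta)},N\}$ and splits the moduli at $a\lessgtr |I|$; in the large-modulus regime it writes $l=am+v$, fixes the quotient $m$, and counts moduli $a\le x/m$ in $\mA$. You dispense with both the dyadic decomposition and the interval-extension optimization: you fix the modulus $a$, transpose the constraint $a\ge n^\delta$ into $n\le a^{1/\delta}$ so that the window containing $l$ shrinks automatically with $a$ in the small-$a$ regime, and then count quotients $b$ and indices $n$ directly, splitting the $a$-range only once at $x^\delta$. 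The two proofs share the essential insight — which you correctly identify as the crux — that the naive per-$n$ bound $|I_n|/a+1$ is fatally lossy for large $a$, and that one must count solutions globally so that the ``$+1$'' lands where the sparsity hypothesis $|\mA(y)|\ll y^\theta\log^{-A}y$ (via Lemma~\ref{lemsumtau} and partial summation) can absorb it; your counting is the transpose of the paper's (modulus fixed, quotient counted, rather than the reverse), and your organization is arguably the cleaner of the two, at the mild cost of a stray $\log x$ that, as you note, is harmless since $A$ is arbitrary.
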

\begin{proof}
Denote by
\[
I^x=\mathop{\cup}\limits_{x'\le n\le x}I_n.
\]
Since $x'>2v$, we note that $v\notin I^x$, and thus $l-v\neq 0$ always holds in the sum. 
In virtue of the dyadic partition method, we can split $[x',x]$ into small intervals of forms $[N,2N]$ with $x^\ve\le N\le \frac12x$, and the number of these small intervals is $O(\log x)$.
Thus, it is enough to estimate the sum over each small interval.

For the sum over an interval $[N,2N]$, we swap the order to get
\begin{align}\label{eqsumn}
\sum_{n\sim N}\sum_{n^{\delta }\le a\in \mA }\sum_{l\equiv v(\bmod a)}1_{I_n}(l)
&\ll\sum_{N^{\delta }\le a\in \mA }\sum_{l\equiv v(\bmod a)}\sum_{n\sim N}1_{I_n}(l).
\end{align}
After writing $I^N=\mathop{\cup}\limits_{n\sim N} I_n$, an interval of length $N+O(N^\eta)$, we have
\[
\sum_{n\sim N}1_{I_n}(l)\ll N^\eta\log N \cdot 1_{I^N}(l)
\]
since each $l\in I^N$ can be counted at most $O(N^\eta\log N)$ times in the sum. Applying this into \eqref{eqsumn} gives
\begin{align*}
\sum_{n\sim N}\sum_{n^{\delta }\le a\in \mA }\sum_{l\equiv v(\bmod a)}1_{I_n}(l)\ll N^\eta\log N \sum_{N^{\delta }\le a\in \mA}\sum_{l\equiv v(\bmod a)}1_{I^N}(l).
\end{align*}
For small $N$, direct treatment to the right-hand side of the above formula cannot provide our bound, and we should extend it to a longer interval instead.
Let $I$ be any interval satisfying
\begin{align}\label{eqI}
I^N\subset I\subset I^x,
\end{align}
 and it is obvious that
\begin{align}\label{eqsumalld}
\sum_{n\sim N}\sum_{n^{\delta }\le a\in \mA }\sum_{l\equiv v(\bmod a)}1_{I_n}(l)\ll N^\eta\log N \sum_{N^{\delta }\le a\in \mA }\sum_{l\equiv v(\bmod a)}1_{I}(l).
\end{align}
We would deduce our bound by estimating the sum with an appropriate $I$.
We divide the $a$-sum into two parts, according to whether $a\le |I|$ or not.

For $a\le |I|$, all moduli would not exceed the length of $I$, and by Lemma \ref{lemsumtau}, we have
\begin{align}\label{eqsmalld}
\sum_{\substack{N^{\delta }\le a\le |I|\\ a\in \mA }}\sum_{l\equiv v(\bmod a)}1_{I}(l)&\ll |I| \sum_{\substack{N^{\delta }\le a\le |I|\\ a\in \mA }}\frac{1}{a}\ll |I|N^{-(1-\theta)\delta} \log^{-A} x.
\end{align}
For $a>|I|$, all moduli are greater than the length of the interval, and thus
\begin{align*}
\sum_{l\equiv v(\bmod a)}1_{I}(l)= 0 ~\text{or} ~1.
\end{align*}
If the sum does not vanish, there should be a $l\in I$ of form
\[
l=am+v\ll x.
\]
The condition $a>|I|\ge|I^N|>x^\ve$ implies that
\[
x/m\gg x^\ve,\ \ \ 0<|m| \ll x{|I|}^{-1}
\]
since $v\notin I$.
Applying this, we have
\begin{align}\label{eqbigd}
\sum_{|I|\ll a\in \mA }\sum_{l\equiv v(\bmod a)}1_{I}(l)&\ll \sum_{1\le m\ll x|I|^{-1}}\sum_{a\le x/m,~ a\in \mA}1\\
&\ll (\log x)^{-A} \sum_{1\le m\ll x|I|^{-1}}\left(\frac xm\right)^{\theta}\ll |I|^{-(1-\theta)} x\log^{-A} x.\notag
\end{align}

We deduce our bound from \eqref{eqsmalld} and \eqref{eqbigd} with an $I$ of appropriate length.
Applying \eqref{eqsmalld} and \eqref{eqbigd} with
\[
|I|=\max\left\{x^{\frac1{2-\theta}}N^{\left(\frac{1-\theta}{2-\theta}\right)\delta },N\right\}
\]
into \eqref{eqsumalld}, we have
\begin{align*}
\sum_{n\sim N}\sum_{n^{\delta }\le a\in \mA }\sum_{l\equiv v(\bmod a)}1_{I_n}(l)&\ll N^{\eta-\frac{(1-\theta)^2\delta}{2-\theta} }x^{\frac1{2-\theta}}\log^{-A} x+N^{1+\eta-(1-\theta)\delta}\log^{-A}x\\
&\ll x^{1+\eta-(1-\theta)\delta}\log^{-A}x.
\end{align*}
Summing this over all $N$ establishes the lemma.
\end{proof}

\subsection{Proof of Theorem \ref{thmone}}
In virtue of the identity
\[
\sum_{{l\equiv v (\bmod a)}}  \binom nl \alpha^l (1-\alpha)^{n-l}=\sum_{{l\equiv n-v (\bmod a)}}  \binom nl \alpha^{n-l} (1-\alpha)^{l},
\]
we note that the case $v=n-h$ can be translated to the case $v=h$ at no cost, except with $1-\alpha$ in place of $\alpha$. Thus, we treat the case $v=h$ only and always assume $v\in (-Ax,\alpha x'/2)$ in the rest of this section.
We first deduce a uniformly upper for a sum of binomial \emph{pmf} from Lemma \ref{thmn}.
\begin{lemma}\label{lemubBone}
Suppose by $\mA_x$ a set meeting Condition A with $\theta\in(0,1]$. For any $\alpha, \delta\in (0,1)$, it holds uniformly in $x'\in (x^\ve,x)$ and $v\in (-Ax,\alpha x'/2)$ that
\begin{align}\label{equbB}
\sum_{x'\le n\le x}\mathop{\sum}_{n^\delta\le a\in \mA_x}\sum_{{l\equiv v (\bmod a)}} \binom nl \alpha^l (1-\alpha)^{n-l}\ll x^{1-(1-\theta)\delta}\log^{-A}x.
\end{align}
\end{lemma}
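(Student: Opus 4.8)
The plan is to exploit the concentration of the binomial \emph{pmf} established in Lemma \ref{lemBio}, turning the weighted $l$-sum into a counting problem of the exact shape controlled by Lemma \ref{thmn}. Write $l_0=\lfloor\alpha(n+1)\rfloor$ for the mode and set
\[
I_n=\left(l_0-\sqrt n\log n,\ l_0+\sqrt n\log n\right),
\]
an interval of length $|I_n|\ll n^{1/2}\log n$ centered near $\alpha n$. I would split the inner sum $\sum_{l\equiv v(\bmod a)}$ into the part with $l\in I_n$ and the complementary tail where $|l-l_0|\ge\sqrt n\log n$, and treat the two ranges separately.

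First I would dispose of the tail. By part (3) of Lemma \ref{lemBio} every term there is $\ll 2^{-\log^2 n}$, and since $n\ge x'>x^\ve$ we have $\log n>\ve\log x$, so each term is $\ll 2^{-\ve^2\log^2 x}$. The number of triples $(n,a,l)$ in play is only a fixed power of $x$ (at most $x$ choices of $n$, at most $|\mA_x|\ll x^A$ choices of $a$ since $a\ll x^A$, and at most $n+1$ admissible $l$), so the whole tail contribution is $\ll x^{A}2^{-\ve^2\log^2 x}$, which decays faster than any power of $x$ and is negligible against the claimed bound.

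For the main range $l\in I_n$ I would bound the \emph{pmf} by its uniform maximum from part (1) of Lemma \ref{lemBio}, namely $\binom nl\alpha^l(1-\alpha)^{n-l}\ll n^{-1/2}$, so that
\[
\sum_{x'\le n\le x}\sum_{n^\delta\le a\in\mA_x}\sum_{\substack{l\equiv v(\bmod a)\\ l\in I_n}}\binom nl\alpha^l(1-\alpha)^{n-l}\ll\sum_{x'\le n\le x}n^{-1/2}\sum_{n^\delta\le a\in\mA_x}\sum_{l\equiv v(\bmod a)}1_{I_n}(l).
\]
To extract the factor $n^{-1/2}$ cleanly I would split $[x',x]$ dyadically into $O(\log x)$ blocks $[N,2N]$ with $N\ge x^\ve$; on each block $n^{-1/2}\asymp N^{-1/2}$, and it remains to bound $\sum_{n\sim N}\sum_{n^\delta\le a}\sum_{l\equiv v}1_{I_n}(l)$. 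This is precisely the sum estimated in Lemma \ref{thmn} with $\eta=1/2$, applied on the block, giving $\ll N^{1+1/2-(1-\theta)\delta}\log^{-A}N$; multiplying by $N^{-1/2}$ yields $N^{1-(1-\theta)\delta}\log^{-A}N\ll x^{1-(1-\theta)\delta}\log^{-A}x$ per block, and summing over the $O(\log x)$ blocks and renaming $A$ finishes the estimate.

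The one point requiring care—and the main obstacle—is that the windows $I_n$ are centered at the mode $l_0\approx\alpha n$ rather than at $n$, whereas Lemma \ref{thmn} is phrased for intervals containing $n$. I would check that the proof of Lemma \ref{thmn} uses its hypothesis only through two facts that survive this shift: over a dyadic block the union $\bigcup_{n\sim N}I_n$ is a single interval of length $\asymp N$ with covering multiplicity $\sum_{n\sim N}1_{I_n}(l)\ll N^{1/2}\log N$, and the residue target $v$ lies strictly to the left of every window, so that $l-v\ne0$ throughout. The latter is guaranteed exactly by the present hypothesis $v<\alpha x'/2$: since $\min_n\min I_n\ge\alpha x'-O(x'^{1/2}\log x')>\alpha x'/2>v$, we get $v\notin\bigcup I_n$. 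This is the $\alpha$-shifted analogue of the condition $v<x'/2$ in Lemma \ref{thmn}, and it is precisely why the admissible range of $v$ contracts from $(-Ax,x'/2)$ there to $(-Ax,\alpha x'/2)$ here.
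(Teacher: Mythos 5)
Your argument is correct and matches the paper's proof in all essentials: both isolate the central window $(l_0-\sqrt n\log n,\,l_0+\sqrt n\log n)$ via Lemma \ref{lemBio}(3), discard the tail as negligible, bound the \emph{pmf} by $n^{-1/2}$ on the window, and reduce the remaining count to Lemma \ref{thmn} with $\eta=\tfrac12$. The only cosmetic difference is that the paper sidesteps the recentering issue you carefully address at the end by reindexing the outer sum over $l_0\in[\alpha x',x]$ (each $l_0$ arising from at most $\alpha^{-1}$ values of $n$), so that Lemma \ref{thmn} applies verbatim with $l_0$ in the role of $n$, and it handles the weight $l_0^{-1/2}$ by partial summation rather than by dyadic blocks.
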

\begin{proof}

Let $l_0=\lfloor\alpha(n+1)\rfloor$ and
\[
I_{l_0}=(l_0-n^{\frac 12}\log n,l_0+n^{\frac 12}\log n).
\]
By Lemma \ref{lemBio}, the rapid decay of the binomial \emph{pmf} implies that
\[
\binom nl \alpha^l (1-\alpha)^{n-l}\ll 2^{-\log^2n}
\]
for $l\notin I_{l_0}$, so the total contribution of these terms is $\ll x^A 2^{-\log^2x'}$, which is negligible since $x'\gg x^\ve$. After applying the uniform upper bound of the binomial \emph{pmf}, it follows that
\begin{align*}
\sum_{x'\le n\le x}\mathop{\sum}_{n^\delta\le a\in \mA_x}\sum_{{l\equiv v (\bmod a)}} \binom nl \alpha^l (1-\alpha)^{n-l}
\ll \sum_{\alpha x'\le l_0\le x}l_0^{-\frac12}\sum_{n^{\delta}\le a\in \mA_x}  \sum_{ l\equiv v \left(\bmod  a\right)}1_{I_{l_0}}(l)
\end{align*}
since $n\ll l_0\ll n$, and every $l_0$ emerges not exceeding $\alpha^{-1}$ times when we sum over $n$.
Applying Lemma \ref{thmn} with $\eta=\frac12$, as well as a summation by parts to the right-hand side, we have
\begin{align*}
\sum_{x'\le n\le x}\mathop{\sum}_{n^\delta\le a\in \mA_x}\sum_{{l\equiv v (\bmod a)}} \binom nl \alpha^l (1-\alpha)^{n-l}\ll x^{1-(1-\theta)\delta}\log^{-A}x,
\end{align*}
establishing the lemma.
\end{proof}

Now we come to the proof of Theorem \ref{thmone}.
We divide  in \eqref{eqaB} the sum over $a$ as
\[
\sum_{a\in \mA_x}=\sum_{a< n^\delta, ~a\in \mA_x}+\sum_{n^\delta\le a\in \mA_x}
\]
with $0<\delta<1$ being a constant to be specified later.
This naturally splits the total sum into two parts, and we write
\begin{align*}
		\sum_{x'\le n\le x}\sum_{a\in \mA_x}\Bigg| \sum_{l\equiv v (\bmod a)} \binom nl \alpha^l (1-\alpha)^{n-l}- \frac 1{a} \Bigg| =  \Sigma_1+\Sigma_2
	\end{align*}
with obvious meanings.

In virtue of Lemma \ref{lemaverage}, we estimate $\Sigma_1$ directly by
\begin{align}\label{eqSigma1}
\Sigma_1\le\sum_{n\le x}  n^{-\frac12}\sum_{a< n^\delta, ~a\in \mA_x}1\ll\sum_{n\le x} n^{-\frac12+\theta\delta}(\log n)^{-A}\ll x^{\frac12+\theta\delta}(\log x)^{-A}.
\end{align}
After removing the absolute value sign in $\Sigma_2$, it follows that
\begin{align*}
\Sigma_2&\le\sum_{x'\le n\le x}\sum_{n^\delta\le a\in \mA_x}\sum_{{l\equiv v (\bmod a)}} \binom nl \alpha^l (1-\alpha)^{n-l}+\sum_{x'\le n\le x}\sum_{n^\delta\le a\in\mA_x}\frac{1}{a},
\end{align*}
where we can bound the first item with Lemma \ref{lemubBone} and the second item with Lemma \ref{lemsumtau}. Thus, we have
\begin{align*}
\Sigma_2\ll x^{1-(1-\theta)\delta}\log^{-A}x.
\end{align*}
This, in combination with \eqref{eqSigma1}, gives
\begin{align*}
\sum_{x'\le n\le x}\sum_{a\in \mA_x}\Bigg| \sum_{l\equiv v (\bmod a)} \binom nl \alpha^l (1-\alpha)^{n-l}- \frac 1{a} \Bigg|  \ll \left(x^{\frac12+\theta\delta}+x^{1-(1-\theta)\delta}\right)\log^{-A}x,
\end{align*}
which establishes the theorem by taking $\delta=\frac12$.

\subsection{Proof of Theorem \ref{thmaverageB}}
In this section, the fact $\theta<1$ and $\ve>0$ always holds, and thus we may ignore the power of $\log x$. Before proving the theorem, we also deduce a uniformly upper bound for the sum of the binomial \emph{pmf} with multiple moduli, but the coprime condition is gone.
\begin{lemma}\label{lemubB}
Suppose by $\mA^1_x,\dots, \mA^u_x$ the sets meeting Condition A with $\theta\in(0,1)$, and denote by $\mD_n$ a set determined by $n$, consisting of $\ll n^\ve$ elements. For any $\alpha, \delta\in (0,1)$ and any given $\nu$,
it holds uniformly in $x'\in (x^\ve,x)$ and $h_i\in (-Ax,\alpha' x'/2)$ that
\begin{align}\label{equbB}
\sum_{x'\le n\le x}\sum_{d\in \mD_n}\mathop{\sum\nolimits}_{\substack {a_1\in\mA_x^1,\dots,a_u\in \mathcal{A}_{x}^u\\ \Pi\bm{a}\gg n^\delta}}\sum_{\substack{l\equiv \bm{v} (\bmod \bm{a})\\ l\equiv \nu (\bmod d)}}  \binom nl \alpha^l (1-\alpha)^{n-l}\ll x^{1-(1-\theta)\delta/u+\ve}
\end{align}
with $\alpha'=\alpha$ for $v_i=h_i$ and $\alpha'=1-\alpha$ for  $v_i=n-h_i$.
\end{lemma}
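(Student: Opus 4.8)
The plan is to mirror the proof of Lemma~\ref{lemubBone}: first collapse the binomial sum to a counting problem for arithmetic progressions inside a short interval, and then feed it into Lemma~\ref{thmn}. The one genuinely new ingredient is that the product condition $\Pi\bm a\gg n^\delta$ forces a single modulus to be large, and once that modulus is isolated the remaining congruences cost only a divisor factor of size $x^\ve$.

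First I would run the concentration step verbatim from Lemma~\ref{lemubBone}. Putting $l_0=\lfloor\alpha(n+1)\rfloor$ and $I_{l_0}=(l_0-n^{1/2}\log n,\,l_0+n^{1/2}\log n)$, Lemma~\ref{lemBio} makes the terms with $l\notin I_{l_0}$ contribute a negligible $\ll x^A 2^{-\log^2 x'}$, while the uniform bound $\binom nl\alpha^l(1-\alpha)^{n-l}\ll n^{-1/2}$ replaces the pmf by $n^{-1/2}$ on $I_{l_0}$. The same symmetry used in the proof of Theorem~\ref{thmone} (interchanging $\alpha$ and $1-\alpha$) lets me assume $v_i=h_i\in(-Ax,\alpha x'/2)$, so that every $l\in I_{l_0}$ satisfies $0<l-v_i\ll x$.

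Next comes the key structural step. Since $\Pi\bm a\gg n^\delta$, at least one coordinate satisfies $a_j\gg n^{\delta/u}$; splitting into the $u=O(1)$ cases according to which $j$ this is and using a union bound, I may assume $a_1\gg n^{\delta/u}$. Summing over $l$ first, for each fixed $a_1$ and each $l\in I_{l_0}$ with $l\equiv v_1\pmod{a_1}$ the number of admissible tuples $(a_2,\dots,a_u,d)$ is at most $\prod_{t\ge2}\tau(l-v_t)\cdot|\mD_n|\ll x^\ve$, because $a_t\mid l-v_t$, $d\mid l-\nu$, $d\in\mD_n$, and $|\mD_n|\ll n^\ve$. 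Pulling this factor out, the quantity to bound becomes
\[
x^\ve\sum_{x'\le n\le x}n^{-1/2}\sum_{n^{\delta/u}\le a_1\in\mA^1_x}\ \sum_{l\equiv v_1(\bmod a_1)}1_{I_{l_0}}(l),
\]
which is exactly the single-modulus quantity controlled in Lemma~\ref{lemubBone}, with the threshold $n^\delta$ replaced by $n^{\delta/u}$. Note that no coprimality among the moduli is used, as required.

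Finally I would conclude as in Lemma~\ref{lemubBone}: reindex by $l_0$ (each $l_0$ arising from $O(1)$ values of $n$, with $n\asymp l_0$), apply Lemma~\ref{thmn} with $\eta=\tfrac12$ and $\delta$ replaced by $\delta/u$ to get $\ll x^{3/2-(1-\theta)\delta/u}\log^{-A}x$ for the unweighted inner sum, and then recover the weight $n^{-1/2}$ by partial summation to reach $\ll x^{1-(1-\theta)\delta/u}\log^{-A}x$. Multiplying by the $x^\ve$ from the nuisance moduli and the $O(1)$ from the union bound gives the claimed $x^{1-(1-\theta)\delta/u+\ve}$ (here $\theta$ may be taken as the largest of the $\theta_j$). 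The main obstacle is the middle step: one must resist merging the moduli by the Chinese Remainder Theorem (which fails because the combined residue would depend on the $a_t$ and on $n$, so Lemma~\ref{thmn} would not apply), and instead exploit that one large modulus already makes the progression sparse in $I_{l_0}$ while the remaining congruences cost only a divisor factor; the loss from $\delta$ to $\delta/u$ is precisely the price of the pigeonhole on which coordinate is large.
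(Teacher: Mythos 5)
Your proposal is correct and follows essentially the same route as the paper: concentrate the pmf on $I_{l_0}$ via Lemma \ref{lemBio}, pigeonhole to isolate one modulus $a_1\gg n^{\delta/u}$, absorb the remaining moduli $a_2,\dots,a_u$ and $d$ into an $x^\ve$ factor via the divisor bound $\tau(l-v_t)\ll x^\ve$ and $|\mD_n|\ll n^\ve$ (using $l-v_t\neq 0$ on $I_{l_0}$), and reduce to the single-modulus estimate of Lemma \ref{lemubBone}. The paper simply cites Lemma \ref{lemubBone} at the end where you re-run its internal steps with $\delta/u$, but the argument is the same.
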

\begin{proof}
Since all items are nonnegative and $\mD_n\ll n^\ve$, we can remove the $d$-sum first, multiplying by an extra factor $x^\ve$.
By the rapid decay property in Lemma \ref{lemBio}, we may also ignore all the terms with $l\notin I_{l_0}=(l_0-n^{\frac 12}\log n,l_0+n^{\frac 12}\log n)$ as above. Without loss of generality, we assume that $a_1\gg n^{{\delta}/{u}}$ since $\Pi\bm{a}\ge n^\delta$. Note that $l-v_t\neq 0$ for any $l\in I_{l_0}$, which indicates that the number of every $a_t$ is not exceeding $\tau(l-v_t)\ll x^\ve$ for any given $l$.
Thus, we remove the sum over $a_t$, $t\ge2$, with a factor $x^\ve$ after changing the order of the summation, and it follows that
\begin{align}\label{eqsigmaone}
\mathop{\sum\nolimits}_{\substack {a_1\in\mA_x^1,\dots,a_u\in \mathcal{A}_{x}^u\\ \Pi\bm{a}\gg n^\delta}}\sum_{\substack{l\equiv \bm{v} (\bmod \bm{a})}}  \binom nl \alpha^l (1-\alpha)^{n-l}\ll x^\ve\mathop{\sum\nolimits}_{n^{{\delta}/{u}}\ll a_1\in \mathcal{A}_{x}}\sum_{\substack{l\equiv v_1 (\bmod a_1)}}  \binom nl \alpha^l (1-\alpha)^{n-l}.
\end{align}
After applying \eqref{eqsigmaone} into \eqref{equbB}, the lemma follows from Lemma \ref{lemubBone} immediately.
\end{proof}

We now come to the proof of Theorem \ref{thmaverageB}. Let $0<\delta<1$ be a constant to be specified later. We divide the sum into two parts, according to $\Pi\bm{a}\le n^\delta$ and $\Pi\bm{a}> n^\delta$. The treatment for the first part is identical to the proof of Theorem \ref{thmone}. The treatment for the second part proceeds almost identically to the proof of Theorem \ref{thmone}. The differences are applying Lemma \ref{lemubB} instead of Lemma \ref{lemubBone} and multiplying by an extra factor $x^\ve$ due to the $d$-sum and the well-known bound $\tau_u(a)\ll a^\ve$. It follows that
\begin{align}\label{eqtwo}
\sum_{x'\le n\le x}\sum_{d\in \mD_n}\mathop{\sum\nolimits'}_{a_1\in\mA^1_x,\dots,a_u\in \mA^u_x}\Bigg| \sum_{\substack{l\equiv \bm{v} (\bmod \bm{a})\\ l\equiv \nu (\bmod d)}} \binom nl \alpha^l (1-\alpha)^{n-l}- \frac 1{d\Pi\bm{a}} \Bigg|  \ll \left(x^{\frac12+\theta\delta}+ x^{1-(1-\theta)\delta/u}\right) x^\ve.
\end{align}
Thus, taking $\delta=\frac12$ in \eqref{eqtwo} establishes the theorem.

\section{Classical sieved sets by small primes}
This section is devoted to the proof of Theorem \ref{thmsmallp}, and all notations here are as in Section \ref{secint}. The argument relies on Lemma \ref{lemsmm}, the second-moment method in probability, by which the theorem follows immediately after calculating the expectation and the variance with Theorem \ref{thmone}. To apply Theorem \ref{thmone}, we apply here
\begin{align}\label{eqpA}
\mA_x=\bigg\{a\ll x^A: a\mid\prod_{p\le \exp(\log^\beta x)}p\bigg\}.
\end{align}
\begin{lemma}\label{lempA}
For sufficiently large $x$, the set $\mA_x$, defined as in \eqref{eqpA}, meets Condition A with $\theta=1$.
\end{lemma}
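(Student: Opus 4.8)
My plan is to reduce Condition~A to a smooth-number count and then run Rankin's trick. The first bullet, $a\ll x^A$, is immediate from \eqref{eqpA}, so the only thing left to prove is the density bound $|\mA_x\cap[1,y]|\ll y\log^{-A}y$ for $y\gg x^\ve$, i.e.\ the second bullet with $\theta=1$. Setting $z=\exp(\log^\beta x)$, I note that $\mA_x$ is exactly the set of squarefree, $z$-smooth integers that are $\ll x^A$; hence it suffices to count squarefree $a\le y$ all of whose prime factors are at most $z$. Since every element is $\ll x^A$, I may assume $y\in(x^\ve,x^A)$, a range on which $\log y\asymp\log x$; for $y\ge x^A$ the count is constant while $y\log^{-A}y$ grows, so nothing is lost.

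The key step is Rankin's trick. For any $\sigma\in(0,1)$ one has $a\le y\Rightarrow (y/a)^\sigma\ge 1$, so summing over the squarefree $z$-smooth integers gives
\begin{align*}
|\mA_x\cap[1,y]|\le y^\sigma\sum_{\substack{a\ \text{squarefree}\\ p\mid a\,\Rightarrow\, p\le z}}a^{-\sigma}= y^\sigma\prod_{p\le z}\left(1+p^{-\sigma}\right),
\end{align*}
the sum being finite because only finitely many primes are $\le z$. I would choose $\sigma=1-s$ with $s=\log^{-\beta}x$, so that $z^{s}=\exp(s\log z)=\exp(1)=e$. Using $\log(1+t)\le t$ together with $p^{-\sigma}=p^{-1}p^{s}\le e\,p^{-1}$ for $p\le z$, Mertens' theorem yields
\begin{align*}
\log\prod_{p\le z}\left(1+p^{-\sigma}\right)\le\sum_{p\le z}p^{-\sigma}\le e\sum_{p\le z}p^{-1}\ll\log\log z\ll\log\log x,
\end{align*}
so the product is at most $\exp\!\left(O(\log\log x)\right)=\log^{O(1)}x$. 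On the other hand $y^{\sigma}=y\,y^{-s}\le y\exp(-\ve s\log x)=y\exp\!\left(-\ve\log^{1-\beta}x\right)$.

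Combining the two estimates,
\begin{align*}
|\mA_x\cap[1,y]|\ll y\exp\!\left(-\ve\log^{1-\beta}x+O(\log\log x)\right).
\end{align*}
Because $\beta<1/2<1$, the exponent is dominated by its first term and tends to $-\infty$ like $-\tfrac{\ve}{2}\log^{1-\beta}x$; since $\log^{1-\beta}x$ grows faster than $\log\log x\asymp\log\log y$, for every fixed $A$ the saving $\exp(-\tfrac\ve2\log^{1-\beta}x)$ eventually exceeds $\log^{-A}y$. This gives $|\mA_x\cap[1,y]|\ll y\log^{-A}y$ and establishes Condition~A with $\theta=1$. The only delicate point is the calibration of $\sigma=1-s$: the parameter $s$ must be small enough that $\sum_{p\le z}p^{-\sigma}\le e\sum_{p\le z}p^{-1}=O(\log\log x)$, keeping the smooth product down to $\log^{O(1)}x$, yet large enough that $y^{-s}=\exp(-\ve s\log x)$ beats every power of $\log y$; the choice $s=\log^{-\beta}x$ balances these against each other, and it succeeds precisely because $\beta<1$ makes $\log^{1-\beta}x$ outgrow $\log\log x$.
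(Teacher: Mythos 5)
Your proof is correct, but it takes a genuinely different route from the paper's. The paper also reduces to the dyadic count $|\mA_x\cap[y,2y]|$ of squarefree $z$-smooth numbers ($z=\exp(\log^\beta x)$), but instead of Rankin's trick it exploits the divisor function: any such $a\ge y$ has $\omega(a)\ge \log y/\log z$, hence $\tau(a)=2^{\omega(a)}\gg 2^{\log y/\log^\beta x}$, so writing $1=a\,\tau(a)^{-1}\cdot\tau(a)/a$ and bounding $\sum_{a\le \exp(z)}\tau(a)/a\ll \log^2(\exp(z))=\exp(2\log^\beta x)$ gives $|\mA_x\cap[y,2y]|\ll y\,2^{-\log y/\log^\beta x}\exp(2\log^\beta x)$. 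There the two exponentials compete, and the saving $\exp(-c\ve\log^{1-\beta}x)$ must absorb the loss $\exp(2\log^\beta x)$, which is exactly where the hypothesis $\beta<1/2$ enters. Your Rankin argument with $\sigma=1-\log^{-\beta}x$ replaces that lossy factor by the much smaller $\prod_{p\le z}(1+p^{-\sigma})=\exp(O(\log\log x))$ via Mertens, so it only needs $\beta<1$; it is the sharper and more robust of the two (and is the standard way to count smooth numbers), while the paper's version is more elementary and sufficient under the standing assumption $\beta<1/2$. Your calibration of $\sigma$, the handling of the range $y\gg x^\ve$ versus $y\ge x^A$, and the final comparison of $\exp(-\tfrac{\ve}{2}\log^{1-\beta}x)$ against $\log^{-A}y$ are all sound.
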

\begin{proof}
Let $\omega(a)$ denote the number of distinct prime factors of $a$. By the dyadic partition method, it is sufficient to counter the number of $a\in [y,2y]$ with $y\gg x^\ve$.
Since all prime factors of $a$ are not exceeding than $\exp(\log^\beta x)$, we have
\[
\tau(a)=2^{\omega(a)}\gg 2^{\log y/\log^{\beta}x}.
\]
The prime number theorem gives that
\[
\prod_{p\le \exp(\log^\beta x)}p\ll \exp(\exp(\log^\beta x)).
\]
Hence,
\begin{align*}
|\mA_x\cap[y,2y]|=\sum_{\substack{ a\in \mA^x\\ a\in[y,2y]}}a\tau(a)^{-1}\frac{\tau(a)}{a}&\ll y 2^{-\log y/\log^{\beta}x}\sum_{a\le \exp(\exp(\log^\beta x))}\frac{\tau(a)}{a}\\
&\ll y2^{-\log y/\log^{\beta}x}\exp(2\log^\beta x)\ll y\log^{-A}y
\end{align*}
since $y\gg x^\ve$ and $\beta<1/2$. This establishes the lemma.
\end{proof}
\begin{proposition}\label{proposition11}
With the same condition as in Theorem \ref{thmsmallp}, we have that
	\begin{align*}
		\mathbb{E} \left(\overline{S}_n(\mV^c(n))\right)= c_n+O(\log^{-A}n).
	\end{align*}
\end{proposition}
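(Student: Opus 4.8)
The plan is to reduce the expectation to a question about the distribution of the binomial \emph{pmf} among arithmetic progressions and then invoke Theorem \ref{thmone}. Since $P_i=W_1+\cdots+W_i$ is a sum of $i$ independent Bernoulli$(\alpha)$ variables, $P_i$ follows the binomial distribution, $\mathbb{P}(P_i=l)=\binom il\alpha^l(1-\alpha)^{i-l}$; in particular $0\le P_i\le i\le n$, so the threshold constraint $P_i\le n$ in the definition of $\mV^c(n)$ is automatic. Writing $Q=\prod_{p\le\exp(\log^\beta n)}p$, the event $P_i\in\mV^c(n)$ is exactly $(P_i,Q)=1$, and by the M\"obius orthogonality of Lemma \ref{lemmobius},
\[
\mathbb{E}\left(\overline{S}_n(\mV^c(n))\right)=\frac1n\sum_{i=1}^n\mathbb{P}\big((P_i,Q)=1\big)=\frac1n\sum_{i=1}^n\sum_{d\mid Q}\mu(d)\sum_{l\equiv0\,(\bmod d)}\binom il\alpha^l(1-\alpha)^{i-l}.
\]
Replacing each inner congruence sum by its expected value $1/d$ produces, since $Q$ is squarefree, the $i$-independent quantity $\sum_{d\mid Q}\mu(d)/d=\prod_{p\le\exp(\log^\beta n)}(1-1/p)=c_n$; averaging over $i$ leaves $c_n$ exactly. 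It thus remains to show that the total error $E$, obtained by subtracting $1/d$ inside, is $O(\log^{-A}n)$.

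Next I would split the $d$-sum at $d=n^A$ and peel off the initial range $i\le n^\ve$. For the latter, each bracket $\mathbb{P}((P_i,Q)=1)-c_n$ lies in $[-1,1]$, so these terms contribute only $O(n^{\ve-1})$. For $n^\ve<i\le n$ and the small divisors $d\le n^A$, I move the absolute value inside and observe that $\{d\mid Q:\ d\ll n^A\}$ is precisely the set $\mA_n$ of \eqref{eqpA}, which by Lemma \ref{lempA} meets Condition A with $\theta=1$. Hence this part of $E$ is bounded by
\[
\frac1n\sum_{n^\ve\le i\le n}\sum_{a\in\mA_n}\Bigg|\sum_{l\equiv0\,(\bmod a)}\binom il\alpha^l(1-\alpha)^{i-l}-\frac1a\Bigg|,
\]
which is exactly the left-hand side of \eqref{eqaB} with $x=n$, $v=h=0$, and $\theta=1$, and is therefore $O(\log^{-A}n)$ by Theorem \ref{thmone}.

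For the large divisors $d>n^A$ I would exploit the boundedness of the walk. Since $d>n^A\ge n\ge i\ge P_i\ge0$, the only multiple of $d$ that $P_i$ can equal is $0$, so the congruence sum collapses to $\mathbb{P}(P_i=0)=(1-\alpha)^i$. The corresponding $1/d$-part is independent of $i$ and equals $\sum_{d\mid Q,\,d>n^A}1/d\ll\log^{-A}n$ by Lemma \ref{lemsumtau} with $\theta=1$ (the divisors of $Q$ obey the same dyadic density bound established in Lemma \ref{lempA}), while the probability part is at most $(1-\alpha)^{n^\ve}\tau(Q)$ for $i>n^\ve$. This last quantity decays faster than any power of $n$, since $\tau(Q)=2^{\pi(\exp(\log^\beta n))}\le\exp(n^{o(1)})$ because $\beta<1$. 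Collecting these pieces gives $E=O(\log^{-A}n)$, as desired.

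I expect the main obstacle to be precisely the large divisors: the number of divisors of $Q$ is super-polynomial in $n$, so the M\"obius sum cannot be bounded term by term across its whole range. One must separate the small divisors, where Theorem \ref{thmone} supplies genuine cancellation on average, from the large ones, where the crude bound $P_i\le i\le n$ pins $P_i=0$; choosing the cutoff at $n^A$ so that both Lemma \ref{lemsumtau} and the collapsed-probability tail beat $\log^{-A}n$ is the delicate point, and everything else is routine.
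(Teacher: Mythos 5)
Your proof is correct and follows essentially the same route as the paper: M\"obius inversion, Theorem \ref{thmone} with $\theta=1$ (justified by Lemma \ref{lempA}) on the bounded moduli, and Lemma \ref{lemsumtau} to complete $\sum_{d}\mu(d)/d$ to $c_n$. The only difference is that the paper truncates the M\"obius identity at the outset via \eqref{eqid1} (any divisor of $(l,Q)$ with $1\le l\le n$ is automatically at most $n$, hence already lies in the set $\mA_n$ of \eqref{eqpA}), which makes your separate treatment of the divisors $d>n^A$ unnecessary --- though your handling of that range, collapsing the congruence sum to $\mathbb{P}(P_i=0)$ and bounding the tail of $\sum 1/d$ by Lemma \ref{lemsumtau}, is also valid.
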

\begin{proof}
For $l=0,1,\cdots,i$, the probability of $P_i=l$ is $\binom il \alpha^l (1-\alpha)^{i-l}$, and thus
	\begin{align*}
		\mathbb{E}(X_i)=\sum_{0\le l \le i} \binom il \alpha^l (1-\alpha)^{i-l}1_{\mV^c(n)}(l).
	\end{align*}
It is easy to see that
	\begin{align}\label{eqE}
		\mathbb{E} (\overline{S}_n) &=\frac 1n \sum_{n^{1/2}\le i \le n} \mathbb{E} (X_i)+O\left(n^{-\frac12}\right)\\
&=\frac 1n \sum_{n^{1/2}\le i \le n} \sum_{0\le l \le i} \binom il \alpha^l (1-\alpha)^{i-l}1_{\mV^c(n)}(l)+O\left(n^{-\frac12}\right).\notag
	\end{align}
After applying the identity
\begin{align}\label{eqid1}
1_{\mV^c(n)}(l)=\sum_{a\mid l,~a\in\mA_n}\mu(a),
\end{align}
we have
\begin{align*}
		\mathbb{E} (\overline{S}_n) =\frac 1n \sum_{n^{1/2}\le i \le n}\mathop{\sum}_{a\in \mA_n}\mu(a) \sum_{\substack{0\le l \le i \\ l\equiv 0 (\bmod a)}} \binom il \alpha^l (1-\alpha)^{i-l}+O\left(n^{-\frac12}\right),
\end{align*}
and it follows by Theorem \ref{thmone} that
\begin{align*}
		\mathbb{E} (\overline{S}_n)
&=\mathop{\sum}_{a\in \mA_n}\frac{\mu(a)}{a}+O(\log^{-A}n).
\end{align*}
We extend the range of the $a$-sum to infinity with Lemma \ref{lemsumtau} to see
\[
\mathop{\sum}_{a\in \mA_n}\frac{\mu(a)}{a}=c_n+O(\log^{-A}n),
\]
establishing the proposition.
\end{proof}
\begin{proposition}\label{proposition}
With the same condition as in Theorem \ref{thmsmallp}, we have that
	\begin{align*}
		\mathbb{V} \left(\overline{S}_n(\mV^c(n))\right)=O\left(\log^{-A}n\right).
	\end{align*}
\end{proposition}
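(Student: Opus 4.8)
The plan is to expand the variance as a double sum of covariances,
\[
\mathbb{V}(\overline{S}_n)=\frac1{n^2}\sum_{i=1}^n\sum_{j=1}^n\mathrm{Cov}(X_i,X_j),
\]
and to exploit the independence of the increments of the walk. The diagonal is harmless: since $X_i\in\{0,1\}$ we have $\mathrm{Cov}(X_i,X_i)=\mathbb{V}(X_i)\le1$, so those terms contribute $O(1/n)=O(\log^{-A}n)$. Everything rests on bounding the off-diagonal sum $\frac{2}{n^2}\sum_i\sum_{m\ge1}|\mathrm{Cov}(X_i,X_{i+m})|$. Writing $P_{i+m}=P_i+(P_{i+m}-P_i)$ with the increment independent of $P_i$ and distributed like $P_m$, and using the M\"obius identity \eqref{eqid1}, I would encode the joint law through the conditional quantity
\[
\phi(l)=\sum_{b\in\mA_n}\mu(b)\sum_{k\equiv -l\,(b)}\binom mk\alpha^k(1-\alpha)^{m-k},
\]
so that $\mathbb{E}(X_iX_{i+m})=\sum_l\mathbb{P}(P_i=l)1_{\mV^c(n)}(l)\phi(l)$ while $\mathbb{E}(X_{i+m})=\sum_l\mathbb{P}(P_i=l)\phi(l)$.

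The crucial structural point is that the covariance is driven entirely by the non-uniformity of $P_m$ in residue classes. Setting $G=\sum_{b\in\mA_n}\mu(b)/b$ and $R_b(l,m)=\sum_{k\equiv -l\,(b)}\binom mk\alpha^k(1-\alpha)^{m-k}-\tfrac1b$, I would write $\phi(l)=G+\rho(l,m)$ with $\rho(l,m)=\sum_b\mu(b)R_b(l,m)$. Substituting into
\[
\mathrm{Cov}(X_i,X_{i+m})=\sum_l\mathbb{P}(P_i=l)1_{\mV^c(n)}(l)\phi(l)-\mathbb{E}(X_i)\sum_l\mathbb{P}(P_i=l)\phi(l),
\]
the constant $G$ cancels, leaving only the contribution of $\rho(l,m)$. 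Bounding $|1_{\mV^c(n)}|\le1$ and $\mathbb{E}(X_i)\le1$ then gives
\[
|\mathrm{Cov}(X_i,X_{i+m})|\le2\sum_l\mathbb{P}(P_i=l)\sum_{b\in\mA_n}\Big|\sum_{k\equiv -l\,(b)}\binom mk\alpha^k(1-\alpha)^{m-k}-\tfrac1b\Big|.
\]

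Next I would sum over $m$ and invoke Theorem \ref{thmone}. By Lemma \ref{lempA} the set $\mA_n$ satisfies Condition A with $\theta=1$, and the representative $v=-l$ of the residue class lies in $(-An,\alpha x'/2)$ because $0\le l=P_i\le n$; hence for each fixed $l$ the theorem applies uniformly in this residue and yields $\sum_{x'\le m\le n}\sum_{b\in\mA_n}|R_b(l,m)|=O(n\log^{-A}n)$ for a threshold $x'\asymp n^{\ve}$. The finitely many small indices $m<x'$ are disposed of by the trivial bound $|\mathrm{Cov}|\le1$, contributing only $O(n^{\ve})$ to the inner sum. Since $\sum_l\mathbb{P}(P_i=l)=1$, summing the displayed bound over $m$ gives $\sum_{m\ge1}|\mathrm{Cov}(X_i,X_{i+m})|=O(n\log^{-A}n)$; summing over $i$ and dividing by $n^2$ then produces the claimed $O(\log^{-A}n)$.

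The main obstacle is precisely this last step. A pointwise application of Lemma \ref{lemaverage} would only give $\sum_b|R_b(l,m)|\ll|\mA_n|\,m^{-1/2}$, and since $\theta=1$ makes $|\mA_n|$ nearly as large as $n$, summing over $m$ overshoots the target by a factor of order $n^{1/2}$. It is the average-in-$m$ cancellation packaged in Theorem \ref{thmone}, together with its uniformity in the residue, which allows the walk-dependent residue $-l$ to be treated for free, that provides the necessary saving. Checking that this uniformity genuinely covers every value $l=P_i$ that can occur, and that the small-$m$ range is negligible, is the one delicate point of the argument.
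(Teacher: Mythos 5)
Your argument is correct, and it rests on the same two pillars as the paper's proof: the independence of the increments of the walk (so that the joint law of $(P_i,P_{i+m})$ factors through a binomial of length $m$), and the averaged estimate of Theorem \ref{thmone} applied in the time variable, with $\mA_n$ meeting Condition A with $\theta=1$ by Lemma \ref{lempA}. The bookkeeping, however, is genuinely different. The paper evaluates $\mathbb{E}(\overline{S}_n^2)$ asymptotically: it inserts the M\"obius identity \eqref{eqid1} twice into $\mathbb{E}(X_iX_j)$ and replaces \emph{both} binomial sums by $1/a$ and $1/a'$, which requires two applications of Theorem \ref{thmone} (one averaging over $i$, one over $j-i$) together with the range restrictions $i\ge n^{1/2}$ and $j-i\ge n^{\ve}$, and then compares the result $c_n^2+O(\log^{-A}n)$ with $\mathbb{E}(\overline{S}_n)^2$. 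You instead keep the law of $P_i$ exact, condition on $P_i=l$, and note that the main term $G=\sum_{b\in\mA_n}\mu(b)/b$ cancels identically inside $\mathrm{Cov}(X_i,X_{i+m})$, so that only the deviation $\rho(l,m)$ survives and a single application of Theorem \ref{thmone} (averaged over the gap $m$, uniformly in the residue $-l$) suffices; the diagonal and the range $m<x'\asymp n^{\ve}$ are handled trivially. This buys a cleaner, exact cancellation and dispenses with the asymptotic evaluation of the first binomial sum, at the cost of having to verify that the uniformity of Theorem \ref{thmone} really covers every residue $v=-l$ with $0\le l\le n$ --- which it does, since $-l\in(-An,\alpha x'/2)$ --- and, strictly speaking, of noting that the identity \eqref{eqid1} applied at $l+k=0$ is a negligible edge case, a point the paper also passes over. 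Both routes yield the stated $O(\log^{-A}n)$.
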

\begin{proof}
It is well-known that
\begin{align}\label{eqV}
\mathbb{V} (\overline{S}_n)=\mathbb{E} (\overline{S}_n^2)-\mathbb{E} (\overline{S}_n)^2.
\end{align}
By Proposition \ref{proposition11}, there is
\begin{align}\label{eqES21}
\mathbb{E} (\overline{S}_n)^2= c_n^2+O(\log^{-A}n).
\end{align}
For $\mathbb{E} (\overline{S}_n^2)$, we expand the square and take out some terms with the trivial bound, and then it follows that
\begin{align}\label{eqES1}
  \mathbb{E} (\overline{S}_n^2)&=\frac 1{n^2} \sum_{i\le n}\sum_{j \le n} \mathbb{E} (X_i X_j)\\
  &=\frac 2{n^2} \sum_{n^{1/2}\le i\le n-n^\ve}\sum_{n^\ve\le j-i \le n-i} \mathbb{E} (X_i X_j)+O\left(n^{-\frac12}\right).\notag
\end{align}

For $i<j$, the probability of $P_i=s$ with $P_j=s+r$ is
	\begin{align*}
		\binom is \alpha^s (1-\alpha)^{i-s} \binom {j-i}r \alpha^r (1-\alpha)^{j-i-r},
	\end{align*}
and thus
\begin{align}\label{eqEij}
\mathbb{E} (X_i X_j) = &\sum_{{0 \le s \le i}}\binom is \alpha^s (1-\alpha)^{i-s} 1_{\mV^c(x)}(s)\\
 &\times\sum_{{0 \le r \le j-i}} \binom {j-i}r \alpha^r (1-\alpha)^{j-i-r}1_{\mV^c(x)}(s+r).\notag
\end{align}
After inserting \eqref{eqid1}, we apply this into \eqref{eqES1} to see
\begin{align}\label{eqinout}
\mathbb{E} (\overline{S}_n^2) = &\frac 2{n^2} \sum_{n^{1/2}\le i\le n-n^\ve}\sum_{a\in \mA_n}\mu(a)  \sum_{\substack{0 \le s \le i \\ a\mid s}}\binom is \alpha^s (1-\alpha)^{i-s} \\
 &\times\sum_{n^\ve\le j-i \le n-i} \sum_{a'\in\mA_n}\mu(a')  \sum_{\substack{0 \le r \le j-i\\ a'\mid r+s}} \binom {j-i}r \alpha^r (1-\alpha)^{j-i-r}+O\left(n^{-\frac12}\right).\notag
\end{align}
We may replace the two sums of binomial coefficients with $1/a$ and $1/a'$ respectively, and we can bound the difference with Theorem \ref{thmone} after extending both $i$-sum and $j-i$-sum to the range $[n^\ve,n]$, which provides an error term $O(\log^{-A}n)$. Thus, it follows that
\begin{align*}
\mathbb{E} (\overline{S}_n^2)
=& \frac {2}{n^2}\sum_{n^{1/2}\le i\le n-n^\ve}\sum_{a\in \mA_n}\frac{\mu(a)}a  \sum_{n^\ve\le j-i \le n-i} \sum_{a'\in\mA_n}\frac{\mu(a')}{a'}+O\left(\log^{-A}n\right)\notag\\
=&c_n^2+O\left(\log^{-A}n\right)
\end{align*}
by Lemma \ref{lemsumtau},
and this completes the proof.
\end{proof}
\begin{proof}[Proof of Theorem \ref{thmsmallp}]
As $c_n\rightarrow0$, we need to check all the conditions in Lemma \ref{lemsmm} before applying it. It is obvious that
\[
c_n=\prod_{p\le\exp(\log^\beta n)}(1-p^{-1})\gg \log^{-\beta} n
\]
and
\[
\frac{c_n-c_{n-n'}}{c_n}=1-\prod_{\exp(\log^\beta (n-n'))\le p\le\exp(\log^\beta n)}\frac1{(1-p^{-1})}.
\]
While, for $n'\ll n\log^{-B}n$,
\begin{align*}
\sum_{\exp(\log^\beta (n-n'))\le p\le\exp(\log^\beta n)}\log (1-p^{-1})\ll&\sum_{\exp(\log^\beta (n-n'))\le p\le\exp(\log^\beta n)}p^{-1}\\
=&\beta\log\log n-\beta\log \log(n-n')+O(\log^{-\beta}n)\\
\ll&\log^{-\beta}n.
\end{align*}
Thus,
\[
\lim_{n\rightarrow \infty}\frac{c_n-c_{n-n'}}{c_n}=0
\]
holds uniformly for $n'\ll n\log^{-B}n$. Now applying Lemma \ref{lemsmm} with Propositions \ref{proposition11} and \ref{proposition}, we obtain Theorem \ref{thmsmallp} immediately.
\end{proof}
\section{Sieved sets of integers in random walks}
\subsection{The distribution of a sieved set of integers}
From this section, $\mA$ and its cousins $\mA^1$ and $\mA^2$ are sets as in \eqref{eqdefA}, and for large $x$, $\mA_x$ always meets Condition A with $\theta<1$.
Since elements in $\mG$ are coprime with each other, every $a\in \mA$ has a sole factorization
\begin{align}\label{eqdg}
a=g_1g_2\cdots g_s
\end{align}
with $g_i\in \mG$. If $a\in \mA$ owns a factorization \eqref{eqdg}, we define
\[
\mu_{\mG}(a)=(-1)^s.
\]

\begin{lemma}\label{lem1V}
Let $\mV$ be a sieved set defined as in \eqref{eqv1}. For any $n\notin \mH$, we have
\begin{align}\label{eq1V}
1_{\mV}(n)=\sum_{\substack{a_1\mid n-h_1,~ a_1\in \mA\\ (a_1,\Delta_1)\notin\mA-\{1\}}}\cdots\sum_{\substack{a_u\mid n-h_u,~ a_u\in \mA\\ (a_u,\Delta_u)\notin\mA-\{1\},
}}\mu_{\mG}(\Pi\bm{a}),
\end{align}
where $\Delta_1=1$ and $\Delta_t=\prod_{i< t}(h_t-h_i)$ for $t\ge2$.
\end{lemma}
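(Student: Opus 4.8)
The plan is to reduce the $u$-fold sieve identity to a product of single-residue Möbius-type identities over $\mA$, and then to recombine the factors into a single sum over $\mu_{\mG}(\Pi\bm{a})$.

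First I would fix one index $t$ and establish the local identity
\[
\sum_{\substack{a\mid n-h_t\\ a\in\mA}}\mu_{\mG}(a)=
\begin{cases}
1, & n\not\equiv h_t\ (\bmod\ g)\ \text{for all }g\in\mG,\\
0, & \text{otherwise.}
\end{cases}
\]
Since $n\notin\mH$ we have $n-h_t\neq 0$, so only finitely many $a$ occur. Because the elements of $\mG$ are pairwise coprime, a product of distinct elements of $\mG$ divides $n-h_t$ if and only if each factor does; hence the $a\in\mA$ dividing $n-h_t$ are exactly the products over subsets of $S_t:=\{g\in\mG:g\mid n-h_t\}$, and, in the spirit of Lemma \ref{lemmobius}, the left-hand side collapses to $\sum_{T\subseteq S_t}(-1)^{|T|}$, which is $1$ when $S_t=\emptyset$ and $0$ otherwise.

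Next, since $n\in\mV$ precisely when $n\not\equiv h_t\ (\bmod\ g)$ for every $t\le u$ and every $g\in\mG$ by \eqref{eqv1}, multiplying the $u$ local identities gives
\[
1_{\mV}(n)=\prod_{t=1}^{u}\ \sum_{\substack{a_t\mid n-h_t\\ a_t\in\mA}}\mu_{\mG}(a_t)
=\sum_{\substack{a_1\mid n-h_1,\dots,a_u\mid n-h_u\\ a_t\in\mA}}\ \prod_{t=1}^{u}\mu_{\mG}(a_t).
\]
It then remains to replace $\prod_{t}\mu_{\mG}(a_t)$ by the single symbol $\mu_{\mG}(\Pi\bm{a})$ under the side conditions in \eqref{eq1V}. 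The key point is that $\mu_{\mG}(\Pi\bm{a})=\prod_{t}\mu_{\mG}(a_t)$ exactly when $a_1,\dots,a_u$ are pairwise coprime, equivalently when $\Pi\bm{a}\in\mA$, since $\mu_{\mG}$ then depends only on the total number of $\mG$-factors.

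The role of the conditions $(a_t,\Delta_t)\notin\mA-\{1\}$ is to enforce this coprimality together with a canonical tie-break. If some $g\in\mG$ divides both $a_t$ and $a_i$ with $i<t$, then $g\mid(n-h_t)-(n-h_i)=h_t-h_i$, so $g\mid\Delta_t$; conversely a full $\mG$-factor of $\Delta_t$ signals that $g$ already divides $n-h_i$ for some earlier $i$. Thus the conditions retain each $g$ only in the block $a_t$ of least index with $g\mid n-h_t$ and discard the repeated choices, so that after reorganisation the sum factorises over those $g\in\mG$ dividing some $n-h_t$, each contributing a factor $1+(-1)=0$; the product is empty (hence $1$) exactly when no such $g$ exists, matching both $1_{\mV}(n)$ and the right-hand side of \eqref{eq1V}. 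I expect this last bookkeeping to be the main obstacle: one must check that $(a_t,\Delta_t)$ carries a genuine element of $\mG$ precisely when $a_t$ shares such an element with some earlier class, so that partial (non-$\mG$) common factors arising from the single product $\Delta_t=\prod_{i<t}(h_t-h_i)$ do not spoil the tie-break; the pairwise coprimality of $\mG$ is what keeps this factorisation clean.
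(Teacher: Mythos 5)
You are taking essentially the same route as the paper: the paper writes $1_{\mV}(n)=\prod_{t\le u}\prod_{g\mid n-h_t,\ g\nmid\Delta_t}\left(1-1_{\mG}(g)\right)$ and expands the product, which is precisely your inclusion--exclusion over the subsets of $S_t$ combined with the least-index tie-break, so your first two steps reproduce the unrestricted version of that expansion correctly. However, the ``main obstacle'' you flag at the end is not a side issue but the entire content of the lemma, and you leave it unverified; it is also the only point the paper actually argues. One half is easy and is what the paper records: for $i<t$ one has $(a_i,a_t)\mid(n-h_i,\,n-h_t)\mid h_t-h_i\mid\Delta_t$, so a $g\in\mG$ occurring in two slots is detected by the condition at the later slot, and this yields the pairwise coprimality needed to write $\prod_t\mu_{\mG}(a_t)=\mu_{\mG}(\Pi\bm{a})$. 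The other half --- that $g\mid a_t$ together with $g\mid\Delta_t$ forces $g\mid n-h_i$ for some $i<t$, so that the restriction discards only genuinely repeated choices and each relevant $g$ really does survive in its least-index slot to contribute the factor $1+(-1)=0$ --- requires the implication $g\mid\prod_{i<t}(h_t-h_i)\Rightarrow g\mid h_t-h_i$ for a single $i$. This holds when $g$ is prime but can fail for composite $g\in\mG$ (take $g=4$ with two differences each exactly divisible by $2$), and the paper passes over it in one sentence. So your proposal is no less complete than the printed proof, but to close it you must either verify this divisibility implication for the specific $\mG$ and $\mH$ at hand or impose it as a hypothesis, exactly as your final sentence suspects.
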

\begin{proof}
It is easy to see the identity
\[
1_{\mV}(n)=\prod_{t\le u}\mathop{\prod}_{\substack{g\mid n-h_t}}\left(1-1_{\mG}(g)\right)=\prod_{t\le u}\mathop{\prod}_{\substack{g\mid n-h_t\\ g\nmid \Delta_t}}\left(1-1_{\mG}(g)\right),
\]
where the condition $g\nmid \Delta_t$ is applied to remove repetitive information from such $g\mid h_i-h_j$ that are common factors of $n-h_i$ and $n-h_j$. Expanding the product, we have

\begin{align*}
1_{\mV}(n)=\prod_{t\le u}\Bigg(\mathop{\sum}_{\substack{a\mid n-h_t,~ a\in \mA\\ (a,\Delta_t)\notin\mA-\{1\}}}\mu_{\mG}(a)\Bigg)=\sum_{\substack{a_1\mid n-h_1,~ a_1\in \mA\\ (a_1,\Delta_1)\notin\mA-\{1\}}}\cdots\sum_{\substack{a_u\mid n-h_u,~ a_u\in \mA\\ (a_u,\Delta_u)\notin\mA-\{1\},
}}\mu_{\mG}(a_1)\cdots\mu_{\mG}(a_u).
\end{align*}
Since elements in $\mG$ are coprime with each other and $(a_i,a_j)\mid (a_j,\Delta_j)$ for any $i<j$, the condition $(a_i,\Delta_i)\notin \mA-\{1\}$ indicates that $(a_i,a_j)=1$, and thus $\Pi\bm{a}\in \mA$ with
\[
\mu_{\mG}(a_1)\cdots\mu_{\mG}(a_u)=\mu_{\mG}(\Pi\bm{a}).
\]
Hence the lemma follows immediately.
\end{proof}

For ease of presentation, we write
\[
\mathop{\sum\nolimits^*}_{a_1,\dots, a_u\in \mA_x}
\]
for the sum over all such $a_t\in \mA_x$, $1\le t\le u$, which are coprime with each other and meet $(a_t,\Delta_t)\notin \mA-\{1\}$. Now we come to the proof of Theorem \ref{pro1v}.
\begin{proof}[Proof of Theorem \ref{pro1v}]
Throughout the proof, the convention $n\notin\mH$ is always assumed.
By Lemma \ref{lem1V}, we have
\begin{align}\label{eqVx}
|\mV(x)|=\mathop{\sum\nolimits^*}_{a_1,\dots, a_u\in \mA_x}\mu_{\mG}(\Pi\bm{a})\sum_{ n\equiv\bm{h} (\bmod \bm{a})}1_x(n)+O(1).
\end{align}
By the Chinese Remainder Theorem, the terms with $\Pi\bm{a}\le y$, where $y\ll x^A$ will be specified later, contribute
\begin{align}
&x\mathop{\sum\nolimits^*}_{\substack{a_1,\dots, a_u\in \mA_x \\ \Pi\bm{a}\le y}} \frac{\mu_{\mG}(\Pi\bm{a})}{\Pi\bm{a}}+O\Bigg(\sum_{a\le y, ~a\in\mA}\tau_u(a)\Bigg)\notag\\
&=x\mathop{\sum\nolimits^*}_{a_1,\dots, a_u\in \mA} \frac{\mu_{\mG}(\Pi\bm{a})}{\Pi\bm{a}}+O\Bigg(x\sum_{y<a\in \mA}\frac{\tau_u(a)}{a}\Bigg)+O\Bigg(\sum_{a\le y,~a\in \mA}\tau_u(a)\Bigg).\notag
\end{align}
The condition $(a_t,\Delta_t)\notin\mA-\{1\}$ indicates that every $g\mid\Pi\bm{a}$ appears at most in one of such $a_t$ with $g\nmid \Delta_t$, and the number of these $a_t$ is equal to $\nu_g(\mH)$.
Thus, we may rewrite the sum in the main term as an Euler product to see that
\[
\mathop{\sum\nolimits^*}_{a_1,\dots, a_u\in \mA}\frac{\mu_{\mG}(\Pi\bm{a})}{\Pi\bm{a}}=\prod_{g\in\mG}\left(1-\frac{\nu_g(\mH)}{g}\right)=c.
\]
Hence it follows that
\begin{align*}
|\mV(x)|&=cx+O\Bigg(\sum_{y<a_1\cdots a_u\in \mA}\sum_{\substack{n\le x\\ n\equiv\bm{h} (\bmod \bm{a})}}1\Bigg)+O\left(xy^{\theta-1+\ve}\right)+O\left(y^{\theta+\ve}\right).
\end{align*}
For the sum in the parentheses, we may assume that $a_1\ge y^{\frac1u}$, without loss of generality. As $n\notin \mH$, we change the order of the summations over $a_t$, $t\ge2$, and $n$ to see
\[
\sum_{y<a_1\cdots a_u\in \mA}\sum_{\substack{n\le x\\ n\equiv\bm{h} (\bmod \bm{a})}}1\ll \sum_{y^{1/u}\le a_1\in \mA}\sum_{n\le x/a_1}\tau(n-h_2)\cdots\tau(n-h_u)\ll xy^{-\frac{1-\theta}{u}+\ve}.
\]
In conclusion, we have
\begin{align*}
|\mV(x)|&=cx+O\left(xy^{-\frac{1-\theta}{u}+\ve}\right)+O\left(xy^{\theta-1+\ve}\right)+O\left(y^{\theta+\ve}\right)\\
&=cx+O\left(x^{\frac{u\theta}{u\theta-\theta+1}+\ve}\right)
\end{align*}
by taking $y=x^{\frac{u}{u\theta-\theta+1}}$, and this establishes the theorem.
\end{proof}

\subsection{Sieved sets of integers in random walks and the proof of Theorem \ref{thm2}}

By Lemma \ref{lemsmm}, the second-moment method in probability, Theorem \ref{thm2} follows directly from the following two propositions, calculations of the expectation and the variance.

\begin{proposition}\label{proposition1}
With the same condition as in Theorem \ref{thm2}, we have, for any $\alpha \in (0,1)$, that
	\begin{align*}
		\mathbb{E} \left(\overline{S}_n(\mV)\right)= c+O\left(n^{-\frac1{2u}(1-\theta)+\ve}\right).
	\end{align*}
\end{proposition}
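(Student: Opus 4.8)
The plan is to follow the template of Proposition \ref{proposition11}, replacing the single-modulus input (Theorem \ref{thmone}) by its multiple-modulus analogue (Theorem \ref{thmaverageB}), and the elementary identity \eqref{eqid1} by the combinatorial expansion of Lemma \ref{lem1V}. First I would write
\[
\mathbb{E}(\overline{S}_n(\mV)) = \frac{1}{n}\sum_{i \le n}\mathbb{E}(X_i), \qquad \mathbb{E}(X_i) = \sum_{0 \le l \le i}\binom{i}{l}\alpha^l(1-\alpha)^{i-l}\,1_{\mV}(l),
\]
and discard the range $i<n^{1/2}$, whose total contribution is $O(n^{-1/2})$ (using the uniform bound of Lemma \ref{lemBio}(1)) and is absorbed into the claimed error because $\frac1{2u}(1-\theta)\le\frac12$; the finitely many terms with $l\in\mH$, excluded by the convention of Lemma \ref{lem1V}, contribute $O(n^{-1/2})$ as well. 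Inserting the expansion of $1_{\mV}(l)$ from Lemma \ref{lem1V} and interchanging the order of summation, I would obtain
\[
\mathbb{E}(\overline{S}_n(\mV)) = \frac{1}{n}\sum_{n^{1/2}\le i\le n}\mathop{\sum\nolimits^*}_{a_1,\dots,a_u\in\mA_n}\mu_{\mG}(\Pi\bm{a})\sum_{l\equiv\bm{h} (\bmod \bm{a})}\binom{i}{l}\alpha^l(1-\alpha)^{i-l} + O\big(n^{-1/2}\big),
\]
where $\sum^*$ is the restricted sum (coprime tuples meeting $(a_t,\Delta_t)\notin\mA-\{1\}$) introduced before the proof of Theorem \ref{pro1v}.

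Next I would compare the inner binomial sum with its expected mass $1/\Pi\bm{a}$, which is precisely the quantity controlled by Theorem \ref{thmaverageB}, applied with $x=n$, the common choice $\mA^1_x=\cdots=\mA^u_x=\mA_x$, residues $\bm{v}=\bm{h}$ (so $\alpha'=\alpha$), and the trivial set $\mD_n=\{1\}$ (hence $d=1$, which removes the extra congruence and gives $1/(d\Pi\bm{a})=1/\Pi\bm{a}$); the fixed integers $h_i$ lie in the admissible range $(-An,\alpha' x'/2)$ uniformly in $x'$ once $n$ is large. Since $|\mu_{\mG}(\Pi\bm{a})|\le 1$ and the starred tuples form a subset of the coprime tuples counted by $\sum'$, replacing each inner sum by $1/\Pi\bm{a}$ costs at most
\[
\frac{1}{n}\sum_{n^{1/2}\le i\le n}\mathop{\sum\nolimits'}_{a_1,\dots,a_u\in\mA_n}\Bigg|\sum_{l\equiv\bm{h} (\bmod \bm{a})}\binom{i}{l}\alpha^l(1-\alpha)^{i-l} - \frac{1}{\Pi\bm{a}}\Bigg| = O\big(n^{-\frac1{2u}(1-\theta)+\ve}\big),
\]
which is exactly the target error.

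What remains is the main term $\mathop{\sum\nolimits^*}_{a_1,\dots,a_u\in\mA_n}\mu_{\mG}(\Pi\bm{a})/\Pi\bm{a}$, now independent of $i$, so the $i$-average contributes only a factor $1+O(n^{-1/2})$. Extending the moduli from $\mA_n$ to all of $\mA$ costs a tail $\ll\sum_{a>n^A}\tau_u(a)/a$, negligible by Lemma \ref{lemsumtau}, while the completed sum factors as the Euler product $\prod_{g\in\mG}\big(1-\nu_g(\mH)/g\big)=c$, exactly as computed in the proof of Theorem \ref{pro1v}. Combining the three contributions yields $\mathbb{E}(\overline{S}_n(\mV))=c+O\big(n^{-\frac1{2u}(1-\theta)+\ve}\big)$.

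The main point to verify—more a matter of careful bookkeeping than a genuine obstacle—is that the starred coprimality and $(a_t,\Delta_t)$ constraints coming from Lemma \ref{lem1V} realize the relevant tuples as a \emph{subset} of those governed by Theorem \ref{thmaverageB}, so that its bound (stated for $\sum'$) may legitimately be invoked to majorize the $\sum^*$ error despite the additional restriction; together with checking the range hypothesis on $\bm{h}$ and the triviality of $\mD_n$, this is all that is needed, since no analytic input beyond Theorem \ref{thmaverageB} and the Euler-product evaluation already performed for Theorem \ref{pro1v} enters the argument.
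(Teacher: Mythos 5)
Your proposal is correct and follows essentially the same route as the paper's proof: expand $1_{\mV}(l)$ via Lemma \ref{lem1V}, control the replacement of the inner binomial sums by $1/\Pi\bm{a}$ through Theorem \ref{thmaverageB} (with $\mD_n=\{1\}$ and the starred tuples majorized by the primed ones), and evaluate the completed main term as the Euler product $c$ using Lemma \ref{lemsumtau}. The only cosmetic difference is that you bound the $l\in\mH$ terms by the uniform $O(i^{-1/2})$ estimate where the paper invokes the rapid decay of the binomial \emph{pmf}; both suffice.
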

\begin{proof}
A similar discussion as in the proof of Proposition \ref{proposition11} shows
\begin{align}
		\mathbb{E} (\overline{S}_n) &=\frac 1n \sum_{n^{1/2}\le i \le n} \sum_{0\le l \le i} \binom il \alpha^l (1-\alpha)^{i-l}1_{\mV}(l)+O\left(n^{-\frac12}\right).
\end{align}
Due to the rapid decay of the binomial \emph{pmf}, the contribution of $l\in\mH$ to $\mathbb{E} (\overline{S}_n)$ is negligible.
Then, replacing the indicative function with \eqref{eq1V} gives
\begin{align*}
		\mathbb{E} (\overline{S}_n) &=\frac 1n \sum_{n^{1/2}\le i \le n}\mathop{\sum\nolimits^*}_{a_1,\dots, a_u\in \mA_n}\mu_{\mG}(\Pi\bm{a}) \sum_{\substack{0\le l \le i,~l\notin \mH \\ l\equiv \bm{h} (\bmod \bm{a})}} \binom il \alpha^l (1-\alpha)^{i-l}+O\left(n^{-\frac12}\right).
\end{align*}
Also, due to the rapid decay of the binomial \emph{pmf}, we may remove the condition $l\notin\mH$ in the above sum with a negligible error.
Then, with $(\Pi\bm{a})^{-1}$ in place of the sum of binomial coefficients, it follows that
\begin{align}\label{eqESV}
\mathbb{E} (\overline{S}_n) &=\frac 1n \sum_{n^{1/2}\le i \le n}\mathop{\sum\nolimits^*}_{a_1,\dots, a_u\in \mA_{n}}\frac{\mu_{\mG}(\Pi\bm{a})}{\Pi\bm{a}}+\mathscr{E},
\end{align}
and by Theorem \ref{thmaverageB}, we have
\[
\mathscr{E}=O\left(n^{-\frac1{2u}(1-\theta)+\ve}\right).
\]
For the main term,
\begin{align}\label{eqsuma}
\mathop{\sum\nolimits^*}_{a_1,\dots, a_u\in \mA_{n}}\frac{\mu_{\mG}(\Pi\bm{a})}{\Pi\bm{a}}=\mathop{\sum\nolimits^*}_{a_1,\dots, a_u\in \mA}\frac{\mu_{\mG}(\Pi\bm{a})}{\Pi\bm{a}}+O(n^{-A})=c+O(n^{-A})
\end{align}
by Lemma \ref{lemsumtau}. With this in \eqref{eqESV}, the proposition follows immediately.
\end{proof}
To appeal to the second-moment method, we also need a narrow variance for $\overline{S}_n(\mV)$, which we deduce in the following proposition.
\begin{proposition}\label{proposition2}
With the same condition as in Theorem \ref{thm2}, we have, for any $\alpha \in (0,1)$, that
	\begin{align*}
		\mathbb{V} \left(\overline{S}_n(\mV)\right)=O\left(n^{-\frac1{2u}(1-\theta)+\ve}\right).
	\end{align*}
\end{proposition}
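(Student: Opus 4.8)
The plan is to follow the template of Proposition \ref{proposition}, but with the single-modulus input (Theorem \ref{thmone}) replaced by the multiple-moduli average (Theorem \ref{thmaverageB}) and the Möbius identity replaced by the sieve identity \eqref{eq1V}. First I would start from $\mathbb{V}(\overline{S}_n)=\mathbb{E}(\overline{S}_n^2)-\mathbb{E}(\overline{S}_n)^2$ as in \eqref{eqV}. Proposition \ref{proposition1} already gives $\mathbb{E}(\overline{S}_n)^2=c^2+O(n^{-\frac1{2u}(1-\theta)+\ve})$, so the whole task reduces to proving that $\mathbb{E}(\overline{S}_n^2)=c^2+O(n^{-\frac1{2u}(1-\theta)+\ve})$.

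For $\mathbb{E}(\overline{S}_n^2)$ I would expand $\frac1{n^2}\sum_{i,j\le n}\mathbb{E}(X_iX_j)$, use the symmetry $i\leftrightarrow j$ together with the rapid decay of the binomial \emph{pmf} (Lemma \ref{lemBio}) to restrict to $n^{1/2}\le i\le n-n^\ve$ and $n^\ve\le j-i\le n-i$ at the cost of $O(n^{-1/2})$, exactly as in \eqref{eqES1}. Writing $m=j-i$, for $i<j$ the joint law factors as in \eqref{eqEij} into an outer sum over $s$ carrying $1_{\mV}(s)$ and an inner sum over $r$ carrying $1_{\mV}(s+r)$. I would then insert \eqref{eq1V} for both indicators, discarding the harmless conditions $s,s+r\notin\mH$ by rapid decay; this produces outer moduli $\bm a$ with $s\equiv\bm h\ (\bmod\ \bm a)$ and inner moduli $\bm a'$ with $r\equiv\bm h'-s\ (\bmod\ \bm a')$, each weighted by $\mu_{\mG}(\Pi\bm a)$, respectively $\mu_{\mG}(\Pi\bm a')$, and restricted by the $*$-constraint, which in particular forces coprimality among the components.

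The two replacements are then carried out in turn. In the inner $r$-sum the residue $\bm h'-s$ depends on $s$, but after the rapid-decay truncation $s$ is concentrated near $\alpha i$, so $\bm h'-s$ is large and negative and lies in the admissible range $(-An,\dots)$; this is the one place where uniformity in the residue is essential, and it lets me apply Theorem \ref{thmaverageB} (with $\mD_n=\{1\}$, $\nu=0$, averaging over $m$) to replace the inner sum by $1/\Pi\bm a'$ uniformly in $s$. Bounding the resulting error by absolute values and using $\sum_s\binom is\alpha^s(1-\alpha)^{i-s}\le1$ keeps it within $O(n^{-\frac1{2u}(1-\theta)+\ve})$. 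Once the inner sum is replaced, the $\bm a'$-sum factors out as $\sum\nolimits^{*}\mu_{\mG}(\Pi\bm a')/\Pi\bm a'=c+O(n^{-A})$ by Lemma \ref{lemsumtau} as in the proof of Proposition \ref{proposition1}, and a second application of Theorem \ref{thmaverageB} (again with $\mD_n=\{1\}$, the weight $(n-i)/n\le1$ from the trivial $m$-sum being harmless in the $\frac1n\sum_i$ average) replaces the outer $s$-sum by $1/\Pi\bm a$, leaving the main term $\frac{2c^2}{n^2}\sum_i(n-i)=c^2+o(1)$. Combining these gives $\mathbb{E}(\overline{S}_n^2)=c^2+O(n^{-\frac1{2u}(1-\theta)+\ve})$ and hence the claimed variance bound. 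I expect the main obstacle to be exactly the $s$-dependent residue in the inner sum: one must check that it remains in the range demanded by Theorem \ref{thmaverageB}, and that the error, once weighted by the outer binomial and summed over $i$ and over $\bm a$, does not accumulate beyond the target exponent $-\frac1{2u}(1-\theta)+\ve$.
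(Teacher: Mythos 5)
Your proposal is correct and follows essentially the same route as the paper: expand $\mathbb{E}(\overline{S}_n^2)$ as in \eqref{eqES1}, insert \eqref{eq1V} twice, and invoke Theorem \ref{thmaverageB} to replace the inner and outer binomial sums by $1/\Pi\bm{a}'$ and $1/\Pi\bm{a}$, the key point being exactly the one you flag, namely that the $s$-dependent residue $-s+\bm{h}$ stays in the range $(-An,\alpha' n^{\ve}/2)$ where the theorem is uniform. The paper performs both replacements in one step in \eqref{eqinout1} rather than sequentially, but the argument is the same.
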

\begin{proof}
An identical argument as in the proof of Proposition \ref{proposition} shows
\begin{align}\label{eqV}
\mathbb{V} (\overline{S}_n)=\mathbb{E} (\overline{S}_n^2)-\mathbb{E} (\overline{S}_n)^2
\end{align}
with
\begin{align}\label{eqES2}
\mathbb{E} (\overline{S}_n)^2= c^2+O\left(n^{-\frac1{2u}(1-\theta)+\ve}\right),
\end{align}
due to Proposition \ref{proposition1}. Then,
\begin{align}\label{eqinout1}
\mathbb{E} (\overline{S}_n^2) = &\frac 2{n^2} \sum_{n^{1/2}\le i\le n-n^\ve}\mathop{\sum\nolimits^*}_{a_1,\dots, a_u\in \mA_n}\mu_{\mG}(\Pi\bm{a})  \sum_{\substack{0 \le s \le i \\ s\equiv \bm{h} (\bmod \bm{a})}}\binom is \alpha^s (1-\alpha)^{i-s} \\
 &\times\sum_{n^\ve\le j-i \le n-i} \mathop{\sum\nolimits^*}_{a_1',\dots, a_u'\in \mA_n}\mu_{\mG}(\Pi\bm{a'})  \sum_{\substack{0 \le r \le j-i\\ r\equiv -s+\bm{h} (\bmod \bm{a'})}} \binom {j-i}r \alpha^r (1-\alpha)^{j-i-r}+O\left(n^{-\frac12}\right),\notag
\end{align}
which we may deduce identically to \eqref{eqinout} except for a discussion on the finite terms with $s\in \mH$ or $r+s\in \mH$ as in the proof of Proposition \ref{proposition1} and an application of \eqref{eq1V} in place of \eqref{eqid1}. By virtue of Theorem \ref{thmaverageB}, we may apply $(\Pi\bm{a})^{-1}$, $(\Pi\bm{a}')^{-1}$ in place of the two sums of binomial coefficients. That is
\begin{align*}
 \mathbb{E} (\overline{S}_n^2) = \frac 2{n^2} \sum_{n^{1/2}\le i\le n-n^\ve}\sum_{n^\ve\le j-i \le n-i}\mathop{\sum\nolimits^*}_{a_1,\dots, a_u\in \mA_n}\frac{\mu_{\mG}(\Pi\bm{a})}{\Pi\bm{a}}\mathop{\sum\nolimits^*}_{a_1',\dots, a_u'\in \mA_n}\frac{\mu_{\mG}(\Pi\bm{a'})}{\Pi\bm{a'}} +O\left(n^{-\frac1{2u}(1-\theta)+\ve}\right).\notag
\end{align*}
Applying \eqref{eqsuma} to the innermost two sums gives
\[
\mathbb{E} (\overline{S}_n^2) =c^2+O\left(n^{-\frac1{2u}(1-\theta)+\ve}\right).
\]
This, in combination of \eqref{eqV} and \eqref{eqES2}, gives the proposition immediately.
\end{proof}

\section{Sieved set of lattice points in random walks}
In this section, we will prove Theorem \ref{thm3}.
Notations in this section coincide with Section \ref{secv2}, and we also apply the notation
\[
\theta=\max\{\theta_1,\theta_2\}
\]
for ease of presentation.
Let $\beta_n$ be a bounded sequence, equally distributed among residue class modulo an integer. That is to say,
for any integer $d$ and large $x$,
\begin{align}\label{eqbetan}
\sum_{\substack{n\le x\\ n\equiv v (\bmod d)}}\beta_n=\frac 1d\sum_{n\le x}\beta_n+O(1).
\end{align}
In this work, we will actually take $\beta_n$ as a polynomial of $n$.

To calculate the expectation and the variance, we require the following lemma.

\begin{lemma}\label{lem2}
Suppose by $\beta_n\ll 1$ a sequence satisfying \eqref{eqbetan}. Let $\bm{h_1}=\{h^1_1,h^1_2,\dots,h^1_u\}$ and $\bm{h_2}=\{h^2_1,h^2_2,\dots,h^2_w\}$ be two given vectors with $h^1_i\in(-Ax,\alpha x'/2)$ and $h^2_j\in(-Ax,(1-\alpha) x'/2)$. Denote by
\begin{align*}
\Xi_2(x)=\sum_{x'\le n \le x}\beta_n \Omega(n)
\end{align*}
with
\begin{align*}
\Omega(n)=\mathop{\sum\nolimits^*}_{a_1,\dots, a_{u}\in \mA^1_x}\mu_{\mG_1}(\Pi\bm{a})  \mathop{\sum\nolimits^*}_{b_1,\dots, b_{w},\in \mA^2_x}\mu_{\mG_2}(\Pi\bm{b}) \sum_{\substack{ l\equiv \bm{h_1} (\bmod \bm{a})\\ l\equiv n-\bm{h_2} (\bmod \bm{b})}} \binom nl \alpha^l (1-\alpha)^{n-l}.
\end{align*}
If $\mA^1_x$ and $\mA^2_x$ meet Condition A with $\theta_1, \theta_2<1$, we have
\[
\Xi_2(x)=c_1c_2\sum_{x'\le n\le x}\beta_n+O\left(x^{1-\frac1{2(u+w)}(1-\theta)+\ve}\right)
\]
holds uniformly in $x'$ with $x^\ve\ll x'< x$.
\end{lemma}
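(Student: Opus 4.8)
The plan is to interchange the order of summation so that the moduli are fixed before the sum over $n$. Since $\mA^1_x,\mA^2_x$ depend on $x$ and not on $n$, this is harmless and gives
\[
\Xi_2(x)=\mathop{\sum\nolimits^*}_{a_1,\dots,a_u\in\mA^1_x}\mu_{\mG_1}(\Pi\bm a)\mathop{\sum\nolimits^*}_{b_1,\dots,b_w\in\mA^2_x}\mu_{\mG_2}(\Pi\bm b)\sum_{x'\le n\le x}\beta_n\,S(n;\bm a,\bm b),
\]
where $S(n;\bm a,\bm b)$ denotes the binomial \emph{pmf} sum over the system $l\equiv\bm{h_1}\pmod{\bm a}$, $l\equiv n-\bm{h_2}\pmod{\bm b}$. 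For each fixed pair $(\bm a,\bm b)$ I would replace $S(n;\bm a,\bm b)$ by its expected main density, assemble the resulting main terms into the two Euler products that define $c_1c_2$, and absorb the accumulated discrepancy into the error via Theorem \ref{thmaverageB}.

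The crucial structural feature is that the $a_t$ are mutually coprime and the $b_j$ are mutually coprime (from the two $\sum^*$), but an $a_t$ and a $b_j$ may share a factor when $\mG_1$ and $\mG_2$ overlap. Put $d=(\Pi\bm a,\Pi\bm b)$ and $b_j'=b_j/(b_j,\Pi\bm a)$, so that $a_1,\dots,a_u,b_1',\dots,b_w'$ are pairwise coprime with $\Pi\bm b'=\Pi\bm b/d$ and $M:=\Pi\bm a\,\Pi\bm b'=\Pi\bm a\Pi\bm b/d$. On each overlapping factor $(a_t,b_j)$ the two prescribed residues agree only if $n\equiv h^1_t+h^2_j\pmod{(a_t,b_j)}$; since the factors $(a_t,b_j)$ are pairwise coprime and multiply to $d$, these cut out exactly one residue class of $n$ modulo $d$. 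For a compatible $n$ the full system coincides with the reduced coprime system and has main density $1/M=d/(\Pi\bm a\Pi\bm b)$, while for an incompatible $n$ one has $S(n;\bm a,\bm b)=0$. Summing the main density over $n$ by means of \eqref{eqbetan} yields $\tfrac{d}{\Pi\bm a\Pi\bm b}\big(\tfrac1d\sum_{x'\le n\le x}\beta_n+O(1)\big)$, so the spurious factor $d$ cancels and the contribution of $(\bm a,\bm b)$ to the main part is $\tfrac{1}{\Pi\bm a\Pi\bm b}\sum_n\beta_n$ up to $O(d/(\Pi\bm a\Pi\bm b))$. Summing the leading part over all $\bm a,\bm b$ and extending from $\mA^i_x$ to $\mA^i$ by Lemma \ref{lemsumtau} factorises as
\[
\Big(\mathop{\sum\nolimits^*}_{\bm a\in\mA^1}\frac{\mu_{\mG_1}(\Pi\bm a)}{\Pi\bm a}\Big)\Big(\mathop{\sum\nolimits^*}_{\bm b\in\mA^2}\frac{\mu_{\mG_2}(\Pi\bm b)}{\Pi\bm b}\Big)\sum_{x'\le n\le x}\beta_n=c_1c_2\sum_{x'\le n\le x}\beta_n,
\]
the two Euler products being evaluated exactly as in the proof of Theorem \ref{pro1v}, while the total $O(d/(\Pi\bm a\Pi\bm b))$ is $O(1)$ by the convergence estimate of Lemma \ref{lemgamma}.

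It remains to bound
\[
\mathop{\sum\nolimits^*}_{\bm a}\mathop{\sum\nolimits^*}_{\bm b}\sum_{x'\le n\le x}\Big|S(n;\bm a,\bm b)-\tfrac{d}{\Pi\bm a\Pi\bm b}\Big|,
\]
where the subtracted quantity is read as $0$ at incompatible $n$. At a compatible $n$ the summand equals $|S(n;\bm a,\bm b')-1/M|$, which depends on $\bm b$ only through $\bm b'$, and at an incompatible $n$ it vanishes; hence extending the $n$-sum to all $n$ only enlarges it, and, for each fixed $\bm a$, each $\bm b'$ arises from at most $x^\ve$ values of $\bm b$ (the admissible common parts divide $\Pi\bm a$ and so number $\ll x^\ve$). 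Reparametrising by $(\bm a,\bm b')$, the $u+w$ moduli $a_1,\dots,a_u,b_1',\dots,b_w'$ are coprime and carry residues $h^1_t$ (of type $v=h$, whence $\alpha'=\alpha$) and $n-h^2_j$ (of type $v=n-h$, whence $\alpha'=1-\alpha$), matching the hypotheses $h^1_i\in(-Ax,\alpha x'/2)$ and $h^2_j\in(-Ax,(1-\alpha)x'/2)$; Theorem \ref{thmaverageB} with the trivial auxiliary set $\mD_n=\{1\}$ then bounds the double sum by $O\big(x\cdot x^{-\frac{1}{2(u+w)}(1-\theta)+\ve}\big)$, and the extra $x^\ve$ is absorbed.

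The main obstacle is precisely the possible overlap of $\mG_1$ and $\mG_2$: it is what blocks a direct appeal to Theorem \ref{thmaverageB} and forces the passage to the coprime moduli $b_j'$. Two points need care. First, one must verify the cancellation of the factor $d$ against the density $1/d$ of compatible residues of $n$, which is what keeps the main term equal to the product $c_1c_2$ rather than something smaller. Second, the reduced moduli $b_j'$ need not lie in $\mA^2_x$ but only in its divisor closure; one checks that this closure still satisfies Condition A with an exponent $\theta_2+\ve<1$ (passing to divisors inflates the counting function by at most $x^\ve$), so that Theorem \ref{thmaverageB} applies with $\theta$ replaced by $\theta+\ve$ and the stated error exponent is unaffected.
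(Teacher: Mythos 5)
Your overall strategy --- separate the common factor $d=(\Pi\bm{a},\Pi\bm{b})$, use \eqref{eqbetan} to convert the induced congruence on $n$ into a density $1/d$ that cancels the spurious factor $d$, recombine the main term into the Euler products giving $c_1c_2$, and control the discrepancy by Theorem \ref{thmaverageB} --- is the same as the paper's. But the specific reduction $b_j'=b_j/(b_j,\Pi\bm{a})$ breaks both hypotheses you need in order to invoke Theorem \ref{thmaverageB}. First, the reduced moduli need not be coprime: with $\mG_1=\{p^2: p \text{ prime}\}$ and $\mG_2=\{p^3: p \text{ prime}\}$ one can have $a_t=p^2q^2$ and $b_j=p^3$, whence $(b_j,\Pi\bm{a})=p^2$, $b_j'=p$, and $(a_t,b_j')=p\neq1$, so the ``reduced coprime system'' you describe does not exist. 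Second, and more seriously, your parenthetical claim that the divisor closure of $\mA^2_x$ still meets Condition A with exponent $\theta_2+\ve<1$ is false: for $\mG_2=\{p^2:p\ \text{prime}\}$ the set $\mA^2$ consists of squares of squarefree numbers (so $\theta_2$ is essentially $1/2$), but its divisor closure is the set of cubefree integers, which has positive density, i.e.\ forces $\theta=1$. Since Theorem \ref{thmaverageB} requires $\theta<1$ and its saving degenerates as $\theta\to1$, the bound $O\big(x^{1-\frac1{2(u+w)}(1-\theta)+\ve}\big)$ for the discrepancy does not follow from your reparametrisation.

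The paper avoids both problems by a different normalization: it strips from $b_j$ the full product $\rho_2((b_j,d))$ of those elements of $\mG_2$ meeting $d$ (and symmetrically $\rho_1((a_i,d))$ from $a_i$), so that the quotients $a_i'$, $b_j'$ remain products of distinct elements of $\mG_1$, $\mG_2$ --- hence stay inside $\mA^1_x$, $\mA^2_x$, where Condition A is available --- and are genuinely coprime to one another and to the stripped part. The price is the bookkeeping you skipped: the stripped factors are accounted for by the counting weights $\prod_{g\mid\rho_i(d)}\nu_g(\mH_i)$ and the quantities $d\gammaup_1(d)\gammaup_2(d)$ controlled by Lemma \ref{lemgamma}, and the residual congruence modulo the stripped part is fed into Theorem \ref{thmaverageB} through the auxiliary set $\mD_n$ --- precisely the mechanism you discard by taking $\mD_n=\{1\}$. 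Your main-term computation (the cancellation of $d$ against the $1/d$ coming from \eqref{eqbetan}) is the right idea and survives in the corrected setup, but the error-term analysis must be redone along the paper's lines.
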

\begin{proof}
Before applying Theorem \ref{thmaverageB}, we should make sure that all moduli in the sum of $\Omega(n)$ are coprime with each other. So, we should separate common factors among coordinates of $\bm{a}$ and $\bm{b}$ first. Denote by
\[
d=(\Pi\bm{a},\Pi\bm{b})=\prod_{i\le u} \prod_{j\le w}(a_i,b_j)
\]
and $\mD_n$ the set of all possible $d$ in the sum of $\Omega(n)$. Since $(a_{i},b_{j})\mid n-h^1_{i}-h^2_{j}$, one notes that there are not too many $d$ for every $n$, actually,
\[
|\mD_n|\ll n^\ve.
\]
In order to keep the rest part still in $\mA^1_x$, we should separate the product $\rho_1(d)$ instead of $d$ from $\Pi\bm{a}$. We can do this by
separating $\rho_1((a_i,d))$ from each $a_i$. We treat $b_j$ identically. For a given $d\in\mD_n$, we write
\[
a'_{i}=\frac{a_{i}}{\rho_1((a_{i},d))},\ \ \ \ \ \ b'_{j}=\frac{b_{j}}{\rho_2((b_{j},d))}.
\]

Since every $g\mid\rho_1(d)$ can appear in one of $\nu_g(\mH_1)$ possible coordinates of $\bm{a}$, we observe that the number of ways to assign $d$ to possible coordinates of $\bm{a}$ is $\prod_{g\mid \rho_1(d)}\nu_g(\mH_1)$.
Thus, we apply the above variable separation and alter the order of the summations to get
\begin{align*}
\Xi_2(x)&=\sum_{x'\le n \le x}\beta_n\sum_{d\in\mD_n}\Bigg(\prod_{g\mid \rho_1(d)}\nu_g(\mH_1)\Bigg)\Bigg(\prod_{g\mid \rho_2(d)}\nu_g(\mH_2)\Bigg)\Omega'(n),
\end{align*}
where
\begin{align*}
\Omega'(n)=\mathop{\mathop{\sum\nolimits^*}_{a'_1,\dots, a'_{u}\in \mA^1_x}\mathop{\sum\nolimits^*}_{b'_1,\dots, b'_{w}\in \mA^2_x}}_{(d,\Pi\bm{a}' \Pi\bm{b}')=(\Pi\bm{a}', \Pi\bm{b}')=1}\mu_{\mG_1}(\Pi\bm{a})\mu_{\mG_2}(\Pi\bm{b}) \sum_{\substack{ l\equiv \bm{h_1} (\bmod \bm{a})\\ l\equiv n-\bm{h_2} (\bmod \bm{b})}} \binom nl \alpha^l (1-\alpha)^{n-l}.
\end{align*}

The sum now is ready to apply Theorem \ref{thmaverageB}. We apply Theorem \ref{thmaverageB} and then extend the sum in the main terms to over all $\mA^1$ and $\mA^2$ with the help of Lemma \ref{lemsumtau}. It follows that
\begin{align}\label{eqdAA'}
 \Xi_2(x)=& \sum_{x'\le n \le x}\beta_n\sum_{d\in\mD_n} d\gammaup_1(d)\gammaup_2(d)\\
 &\times \mathop{\mathop{\sum\nolimits^*}_{a'_1,\dots, a'_{u}\in \mA^1}\mathop{\sum\nolimits^*}_{b'_1,\dots, b'_{w}\in \mA^2}}_{(d,\Pi\bm{a}' \Pi\bm{b}')=(\Pi\bm{a}', \Pi\bm{b}')=1}\frac{\mu_{\mG_1}(\Pi\bm{a})}{\Pi\bm{a}'}\frac{\mu_{\mG_2}(\Pi\bm{b})}{\Pi\bm{b}'}   +O\left(x^{1-\frac1{2(u+w)}(1-\theta)+\ve}\right),\notag
\end{align}
where $\gammaup_i(d)$ is defined in \eqref{eqgamma}.

To alter the order of the summations over $n$ and $d$, we need to reverse the dependence of $d$ on $n$. For a given $d$, both products $\rho_1(d)$ and $\rho_2(d)$ are well-determined, and thus there is a unique factorization
\[
d=\prod_{\substack{g_1\mid \rho_1(d)\\ g_2\mid \rho_2(d)}}(d,g_1,g_2).
\]
If $(d,g_1,g_2)$ is assigned to a couple of possible coordinates, such as the $i$-th coordinate in $\bm{a}$ and the $j$-th coordinate in $\bm{b}$, it holds that
\[
n\equiv h^1_{i}-h^2_{j} \pmod{(d,g_1,g_2)}.
\]
Then, by the Chinese Remainder Theorem, there is a unique $v\in \{0,\dots,d-1\}$ such that
\[
n\equiv v \pmod{d}.
\]
Now we alter the order of the summations over $d$ and $n$ to get
\begin{align*}
\sum_{x'\le n \le x}\beta_n\sum_{d\in\mD_n}d\gammaup_1(d)\gammaup_2(d)&=\sum_d d\gammaup_1(d)\gammaup_2(d)\sum_{\substack{x'\le n \le x\\ n\equiv v(\bmod d)}}\beta_n\\
&=\sum_d d \gammaup_1(d)\gammaup_2(d)\Bigg(\frac{1}d\sum_{x'\le n\le x}\beta_n+O(1)\Bigg)
\end{align*}
since $\beta_n$ meets condition \eqref{eqbetan}.
It is easy to see from Lemma \ref{lemgamma} that the contribution of the error term to $\Xi_2(x)$ is $O(1)$.
 With this in \eqref{eqdAA'}, we combine the sum to recover $a_1\cdots a_{u}=\rho_1(d) a'_1\cdots a'_{u}$ as well as $b_1\cdots b_{w}=\rho_2(d) b'_1\cdots b'_{w}$ to get
\begin{align}
\Xi_2(x)&=\mathop{\sum\nolimits^*}_{a_1,\dots a_{u}\in \mA^1}\frac{\mu_{\mG_1}(\Pi\bm{a})}{\Pi\bm{a}}\mathop{\sum\nolimits^*}_{b_1,\dots b_{w}\in \mA^2}\frac{\mu_{\mG_2}(\Pi\bm{b})}{\Pi\bm{a}} \sum_{x'\le n\le x}\beta_n  +O\left(x^{1-\frac1{2(u+w)}(1-\theta)+\ve}\right)\notag\\
&=c_1c_2\sum_{x'\le n\le x}\beta_n+O\left(x^{1-\frac1{2(u+w)}(1-\theta)+\ve}\right),\notag
\end{align}
establishing the lemma.
\end{proof}
\begin{proposition}\label{prolat1}
With the same condition as in Theorem \ref{thm3}, we have, for any $\alpha\in(0,1)$, that
	\begin{align*}
		\mathbb{E} \left(\overline{S}_n(\mV^2)\right)= c_1c_2+O\left(n^{-\frac1{2(u+w)}(1-\theta)}\right).
	\end{align*}
\end{proposition}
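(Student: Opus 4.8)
The plan is to follow the template of Proposition \ref{proposition1}, reducing the computation of the expectation to a single application of Lemma \ref{lem2} with the constant sequence $\beta_i\equiv 1$. Since the $\alpha$-random walk \eqref{eq2rw} sits at the point $(l,i-l)$ after $i$ steps with probability $\binom{i}{l}\alpha^l(1-\alpha)^{i-l}$, the visit indicator satisfies
\[
\mathbb{E}(X_i)=\sum_{0\le l\le i}\binom il \alpha^l (1-\alpha)^{i-l}\,1_{\mV_1}(l)\,1_{\mV_2}(i-l),
\]
so that $\mathbb{E}(\overline{S}_n)=\frac1n\sum_{i\le n}\mathbb{E}(X_i)$. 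As a first step I would discard the terms with $i<n^{1/2}$: there are $O(n^{1/2})$ of them and each $\mathbb{E}(X_i)\le 1$, so their total contribution to $\mathbb{E}(\overline{S}_n)$ is $O(n^{-1/2})$.

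Next I would expand both sieve indicators via Lemma \ref{lem1V}. The identity \eqref{eq1V} is valid only off the shift sets, so I must first dispose of the terms with $l\in\mH_1$ or $i-l\in\mH_2$. These are finitely many values, and the ranges $h^1_t<\alpha x'/2$, $h^2_t<(1-\alpha)x'/2$ assumed in Lemma \ref{lem2} place them far from the mode $l_0=\lfloor\alpha(i+1)\rfloor$; by the rapid decay of the binomial \emph{pmf} (Lemma \ref{lemBio}) their contribution is negligible. I may therefore assume $l\notin\mH_1$ and $i-l\notin\mH_2$ and apply \eqref{eq1V} to both $1_{\mV_1}(l)$ and $1_{\mV_2}(i-l)$, writing the latter as the congruence $l\equiv i-\bm{h_2}\pmod{\bm{b}}$. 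The resulting expression is exactly $\Omega(i)$ in the notation of Lemma \ref{lem2}, whence $\sum_{n^{1/2}\le i\le n}\mathbb{E}(X_i)=\Xi_2(n)$ for the choice $\beta_i\equiv 1$, which trivially satisfies \eqref{eqbetan}.

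Finally I would invoke Lemma \ref{lem2} with $x=n$ and $x'=n^{1/2}$, giving
\[
\Xi_2(n)=c_1c_2\bigl(n+O(n^{1/2})\bigr)+O\!\left(n^{1-\frac1{2(u+w)}(1-\theta)+\ve}\right);
\]
dividing by $n$ and absorbing the $O(n^{-1/2})$ truncation error yields the stated estimate (up to the customary $\ve$ in the exponent). The proposition is essentially bookkeeping once Lemma \ref{lem2} is available, since that lemma already performs the variable separation, the coprimality reduction, and the appeal to Theorem \ref{thmaverageB}. The only genuinely delicate point in the present argument is the negligibility of the excluded residues $l\in\mH_1$ and $i-l\in\mH_2$; here the explicit placement of $\mH_1,\mH_2$ within the admissible ranges of Lemma \ref{lem2} is precisely what makes the binomial decay applicable to both ends of the distribution.
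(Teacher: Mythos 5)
Your proposal is correct and follows essentially the same route as the paper: the same reduction of $\mathbb{E}(\overline{S}_n)$ to the sum $\frac1n\sum_{n^{1/2}\le i\le n}\sum_{l}\binom il\alpha^l(1-\alpha)^{i-l}1_{\mV_1}(l)1_{\mV_2}(i-l)$, the same disposal of the finitely many terms with $l\in\mH_1$ or $i-l\in\mH_2$ via the rapid decay of the binomial \emph{pmf}, the same double application of Lemma \ref{lem1V}, and the same final appeal to Lemma \ref{lem2} with $\beta_i\equiv1$. Your remark that the error term should carry the $+\ve$ inherited from Lemma \ref{lem2} is a fair observation about the statement as printed, but it does not affect the argument.
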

\begin{proof}
A similar discussion as in the proof of Proposition \ref{proposition1} gives
\begin{align}\label{eqESVs}
		\mathbb{E} (\overline{S}_n) &=\frac 1n \sum_{n^{1/2}\le i \le n} \sum_{0\le l \le i} \binom il \alpha^l (1-\alpha)^{i-l}1_{\mV_1}(l)1_{\mV_2}(i-l)+O(n^{-\frac12}).
\end{align}
Applying Lemma \ref{lem1V} twice to the indicative functions and removing the effect of finite terms from $l\in\mH_1$ and $i-l\in\mH_2$ as before, we have
\begin{align*}
		\mathbb{E} (\overline{S}_n) &=\frac 1n \sum_{n^{1/2}\le i \le n}\mathop{\sum\nolimits^*}_{a_1,\dots, a_{u}\in \mA^1_n}\mu_{\mG_1}(\Pi\bm{a})  \mathop{\sum\nolimits^*}_{b_1,\dots, b_{w},\in \mA^2_n}\mu_{\mG_2}(\Pi\bm{b}) \sum_{\substack{l\equiv \bm{h_1} (\bmod \bm{a})\\ l\equiv i-\bm{h_2} (\bmod \bm{b})}} \binom il \alpha^l (1-\alpha)^{i-l}+O(n^{-\frac12}),
\end{align*}
and then the proposition follows immediately by applying Lemma \ref{lem2}.
\end{proof}

\begin{proposition}\label{prolat2}
With the same condition as in Theorem \ref{thm3}, we have, for any $\alpha\in(0,1)$, that
	\begin{align*}
		\mathbb{V} \left(\overline{S}_n(\mV^2)\right)= O\left(n^{-\frac1{2(u+w)}(1-\theta)+\ve}\right).
	\end{align*}
\end{proposition}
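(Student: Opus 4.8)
The plan is to mirror the proof of Proposition \ref{proposition2}, replacing the single sieve by the two coordinate sieves and invoking Lemma \ref{lem2} in place of the one-dimensional averaging. Starting from $\mathbb{V}(\overline{S}_n)=\mathbb{E}(\overline{S}_n^2)-\mathbb{E}(\overline{S}_n)^2$, Proposition \ref{prolat1} already gives $\mathbb{E}(\overline{S}_n)^2=(c_1c_2)^2+O(n^{-\frac1{2(u+w)}(1-\theta)+\ve})$, so it suffices to prove the matching asymptotic $\mathbb{E}(\overline{S}_n^2)=(c_1c_2)^2+O(n^{-\frac1{2(u+w)}(1-\theta)+\ve})$. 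First I would expand $\mathbb{E}(\overline{S}_n^2)=\frac1{n^2}\sum_{i,j}\mathbb{E}(X_iX_j)$, symmetrise to $\frac2{n^2}\sum_{i<j}$, and discard the ranges $i<n^{1/2}$ and $j-i<n^\ve$ at a cost of $O(n^{-1/2})$, exactly as in \eqref{eqES1}.

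The crux is the evaluation of $\mathbb{E}(X_iX_j)$ for the two-dimensional walk \eqref{eq2rw}. Writing $s=x_i$ and $r=x_j-x_i$, the joint law factors as a binomial \emph{pmf} on $i$ trials in $s$ times a binomial \emph{pmf} on $m:=j-i$ trials in $r$, and the event $P_i\in\mV^2,\,P_j\in\mV^2$ becomes the four sieve conditions $s\in\mV_1$, $i-s\in\mV_2$, $s+r\in\mV_1$ and $j-s-r\in\mV_2$. Applying Lemma \ref{lem1V} to each of the four indicators (the finitely many exceptional terms with an argument in $\mH_1$ or $\mH_2$ being negligible by the rapid decay in Lemma \ref{lemBio}), the inner sum over $r$ is precisely the quantity $\Omega(m)$ of Lemma \ref{lem2} with the shifted vectors $\bm{h_1}-s\bm 1$ and $\bm{h_2}-(i-s)\bm 1$, while the outer sum over $s$ is $\Omega(i)$ with the original $\bm{h_1},\bm{h_2}$. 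I would then replace both binomial sums by their main terms $1/(\Pi\bm a\,\Pi\bm b)$ and $1/(\Pi\bm a'\,\Pi\bm b')$; the resulting error is controlled by Theorem \ref{thmaverageB} after extending the $i$-sum and the $m$-sum to the full range $[n^\ve,n]$, so that the global parameter is $x=n$ and the shifted residues $\bm{h_1}-s$, $\bm{h_2}-(i-s)$ lie in the admissible windows $(-An,\alpha'n^\ve/2)$ uniformly in $s$ and $i$. Summed against the $1/n^2$ prefactor this contributes $O(n^{-\frac1{2(u+w)}(1-\theta)+\ve})$. For the surviving main term I would extend the arithmetic sums over $\bm a,\bm b$ and over $\bm a',\bm b'$ to infinity by Lemma \ref{lemsumtau} and Lemma \ref{lemgamma}, each collapsing to $c_1c_2$, and then evaluate $\frac{2(c_1c_2)^2}{n^2}\sum_{n^{1/2}\le i\le n-n^\ve}(n-i)=(c_1c_2)^2(1+o(1))$. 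Equivalently, one may phrase the two reductions as two nested applications of Lemma \ref{lem2}, the inner with weight $\beta_m\equiv1$ and the outer with the linear weight $\beta_i=1-i/n$, which is bounded and satisfies \eqref{eqbetan}.

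The main obstacle is the uniform error control in the step just described: the inner residues depend on the outer variable $s$ through $s+r$ and $j-s-r$, so Theorem \ref{thmaverageB} must be applied to a family of congruences whose shifts drift with $s$ and with $i$. Taking the upper limit of the $m$-sum to be $n-i$ would force the admissibility window to shrink to $(-A(n-i),\cdots)$ and fail for $i$ close to $n$; the remedy is to bound the error only after extending every sum to $[n^\ve,n]$ with global scale $x=n$, keeping the genuine main term on its correct range, exactly as in the proof of Proposition \ref{proposition}. A secondary bookkeeping point, already met in Lemma \ref{lem2}, is to separate the common factors among the four families of moduli $\bm a,\bm b,\bm a',\bm b'$ before invoking the averaging theorem, so that the pairwise coprimality hypotheses of Theorem \ref{thmaverageB} are in force. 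Combining the two asymptotics through $\mathbb{V}(\overline{S}_n)=\mathbb{E}(\overline{S}_n^2)-\mathbb{E}(\overline{S}_n)^2$ then yields the stated bound.
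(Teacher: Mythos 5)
Your proposal is correct and follows essentially the same route as the paper: expand $\mathbb{E}(\overline{S}_n^2)$, apply Lemma \ref{lem1V} four times, and evaluate the resulting inner and outer binomial sums by two nested applications of Lemma \ref{lem2} (the paper takes the outer weight $\beta_i=1-n^{-1+\ve}-i/n$, matching your $1-i/n$ up to a negligible shift), noting as you do that the inner shifts $-s+\bm{h_1}$ and $j-s-\bm{h_2}=(j-i)-(-i+s+\bm{h_2})$ stay in the admissible windows because $s\le i$. The bookkeeping issues you flag (drifting residues, separating common factors among the four families of moduli) are exactly what Lemma \ref{lem2} was designed to absorb, so no further argument is needed.
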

\begin{proof}
The steps of the proof is the same as in the proof of Proposition \ref{proposition2}, except applying \eqref{eq1V} four times. It follows that $\mathbb{E} (\overline{S}_n^2)$ is equal to
\begin{align*}
&\frac 2{n^2} \sum_{n^{1/2}\le i\le n-n^\ve}\mathop{\sum\nolimits^*}_{a_1,\dots, a_u\in \mA^1_n}\mu_{\mG_1}(\Pi\bm{a}) \mathop{\sum\nolimits^*}_{b_1,\dots, b_{w},\in \mA^2_n}\mu_{\mG_2}(\Pi\bm{b})
 \sum_{\substack{s\equiv \bm{h_1} (\bmod \bm{a})\\ s\equiv i-\bm{h_2} (\bmod \bm{b})}}\binom is \alpha^s (1-\alpha)^{i-s} \\
 &\times\sum_{n^\ve\le j-i \le n-i} \mathop{\sum\nolimits^*}_{a_1',\dots, a_u'\in \mA^1_n}\mu_{\mG_1}(\Pi\bm{a'}) \mathop{\sum\nolimits^*}_{b_1',\dots, b_w'\in \mA^2_n}\mu_{\mG_2}(\Pi\bm{b'}) \sum_{\substack{r\equiv -s+\bm{h_1} (\bmod \bm{a'})\\ r\equiv j-s-\bm{h_2} (\bmod \bm{b'})}} \binom {j-i}r \alpha^r (1-\alpha)^{j-i-r}
\end{align*}
adding an error term $O\left(n^{-\frac12}\right)$. As $j-s-\bm{h_2}=j-i-(-i+s+\bm{h_2})$ with $-i+s\le 0$, both the inner sum and the outer sum can be calculated with Lemma \ref{lem2}.
The condition \eqref{eqbetan} is required here, and we should calculate the outer sum by applying Lemma \ref{lem2} with $\beta_i=1-n^{-1+\ve}-\frac in$. Thus, we have
\[
\mathbb{E} (\overline{S}_n^2)=c_1^2c_2^2+O\left(n^{-\frac1{2(u+w)}(1-\theta)+\ve}\right).
\]
This, in combination with Proposition \ref{prolat1}, shows
\[
\mathbb{V} (\overline{S}_n)=\mathbb{E} (\overline{S}_n^2)-\mathbb{E} (\overline{S}_n)^2= O\left(n^{-\frac1{2(u+w)}(1-\theta)+\ve}\right),
\]
finishing the proof.
\end{proof}

\section{Sieved sets among visible lattice points in random walks}
Notations in this section coincide with Section \ref{secvl}. For notational convenience, we also apply here two notations
\[
\theta=\max\{\theta_1,\theta_2\},\ \ \ \ \ \vartheta=\max\left\{x^{\frac{u\theta}{u\theta-\theta+1}},x^{\frac{w\theta}{w\theta-\theta+1}}\right\}.
\]

\subsection{The distribution of sieved sets among visible lattice points}
This section is devoted to the proof of Theorem \ref{thm4}.
\begin{proof}[Proof of Theorem \ref{thm4}]
Throughout the proof, the convention $n\notin\mH_1$ and $m\notin\mH_2$ are always assumed, and the contribution of all terms with $n\in\mH_1$ or $m\in\mH_2$ to $\mV^3(x)$ is $O(x)$.
Due to the definition of $\mV^3$, we have
\begin{align*}
	\left|\mV^3(x)\right|&=\mathop{\sum\sum}_{\substack{m,n\le x\\ (m,n)=1}} 1_{\mV_1}(m)1_{\mV_2}(n)+O(x)=\sum_{r\le x}\mu(r)\mathop{\sum\sum}_{\substack{m,n\le x\\ m\equiv n\equiv 0 (\bmod r)}} 1_{\mV_1}(m)1_{\mV_2}(n)+O(x)
\end{align*}
by Lemma \ref{lemmobius}. Applying Lemma \ref{lem1V} to the indicative function, we have
\begin{align}\label{eqNk}
	\left|\mV^3(x)\right|=\sum_{r\le x}\mu(r)\mathop{\sum\nolimits^*}_{a_1,\dots, a_{u}\in \mA^1}\mathop{\sum\nolimits^*}_{b_1,\dots, b_{w}\in \mA^2}\mu_{\mG_1}(\Pi\bm{a})\mu_{\mG_2}(\Pi\bm{b})\mathop{\sum\sum}_{\substack{ m=\bm{h_1} (\bmod \bm{a})\\ n=\bm{h_2} (\bmod \bm{b})\\ m\equiv n\equiv 0 (\bmod r)}} 1_x(m)1_x(n)+O(x),
\end{align}
where the last several sums, after an arrangement, become
\[
\Bigg(\mathop{\sum\nolimits^*}_{a_1,\dots, a_{u}\in \mA^1}\mu_{\mG}(\Pi\bm{a})\sum_{\substack{ m\equiv 0 (\bmod r)\\ m=\bm{h_1} (\bmod \bm{a})}}1_x(m)\Bigg)\Bigg(\mathop{\sum\nolimits^*}_{b_1,\dots, b_{w}\in \mA^2}\mu_{\mG_2}(\Pi\bm{b})\sum_{\substack{ n\equiv 0 (\bmod r)\\ n=\bm{h_2} (\bmod \bm{b})}} 1_x(n)\Bigg)=\lambda_1\lambda_2
\]
with obvious meanings. We just treat $\lambda_1$ in detail, and $\lambda_2$ is identical.

With $r_1=(r,\Pi\bm{a})$, one notes that
\[
\sum_{\substack{ m\equiv 0 (\bmod r)\\ m=\bm{h_1} (\bmod \bm{a})}}1_x(m)=0
\]
unless $r_1\mid \Pi\bm{h_1}$, a convention always holding in the following.
With this condition, every $g\mid\rho_1(r_1)$ only appears in one of such $a_i$ with $(g,r_1)\mid h_i$, and the number of such $a_i$ is $\kappa_1((g,r_1))$. Thus, for a given $r_1$, the number of ways to assign it to possible coordinates of $\bm{a}$ is $\prod_{g\mid \rho_1(r_1)}\kappa_1((g,r_1))$.
With $a_i'=\frac{a_i}{\rho_1((a_i,r_1))}$, it follows that
\[
\lambda_1=\sum_{r_1\mid (r,\Pi\bm{h_1})}\Bigg(\prod_{g\mid \rho_1(r_1)}\left(-\kappa_1((g,r_1))\right)\Bigg)\mathop{\sum\nolimits^*}_{\substack{a'_1,\dots, a'_{u}\in \mA^1 \\ (r,\Pi\bm{a}')=1}}\mu_{\mG_1}(\Pi\bm{a}')\sum_{\substack{ m\equiv 0 (\bmod r)\\ m=\bm{h_1} (\bmod \bm{a})}}1_x(m),
\]
and an identical discussion as for \eqref{eqVx} shows that the last two sums are equal to
\[
\frac{x r_1}{r \rho_1(r_1)}\mathop{\sum\nolimits^*}_{\substack{a'_1,\dots, a'_{u}\in \mA^1 \\ (r,\Pi\bm{a}')=1}}\frac{\mu_{\mG_1}(\Pi\bm{a}')}{\Pi\bm{a}'} +O\left(\left(\frac{x}{r}\right)^{\vartheta+\ve}\right).
\]
Thus, we have
\[
\lambda_1=\frac{x}{r}\sum_{r_1\mid (r,\Pi\bm{h_1})}r_1\omega_1(r_1)\mathop{\sum\nolimits^*}_{\substack{a'_1,\dots, a'_{u}\in \mA^1 \\ (r,\Pi\bm{a}')=1}}\frac{\mu_{\mG_1}(\Pi\bm{a}')}{\Pi\bm{a}'}+O\left(\left(\frac{x}{r}\right)^{\vartheta+\ve}\right)
\]
with $\omega_i(r_i)$ being defined as in \eqref{eqomega}.
Rewrited as an Euler product, the innermost sum evolves into
\[
\prod_{\substack{g\in\mG_1\\(g,r)=1}}\Bigg(1-\frac{\nu_g(\mH_1)}{g}\Bigg)=c_1\prod_{g\mid g_1(r)}\Bigg(1-\frac{\nu_g(\mH_1)}{g}\Bigg)^{-1}.
\]
Thus, we have
\[
\lambda_1=\frac{c_1x}{r}f_1(r)+O\left(\left(\frac{x}{r}\right)^{\vartheta+\ve}\right)
\]
with $f_1$ as in \eqref{eqfi}.

Applying this as well as a similar identity for $\lambda_2$ into \eqref{eqNk} gives
\begin{align*}
\left|\mV^3(x)\right|=&c_1c_2x^2\sum_{r}\frac{\mu(r)}{r^2}f_1(r)f_2(r)+O\Bigg(\sum_{r> x}\frac{x^{2+\ve}}{r^2}\Bigg)+O\Bigg(x^\ve\sum_{r\le x}\Bigg(\frac{x^{1+\vartheta}}{r^{1+\vartheta}}+\frac{x^{2\vartheta}}{r^{2\vartheta}}\Bigg)\Bigg)\\
=&c_1c_2c_3x^2+O\left(x^{1+\vartheta+\ve}\right),
\end{align*}
establishing the theorem.
\end{proof}

\subsection{Sieved sets among visible lattice points in random walks}
In this section, we consider a sieved set among visible lattice points in $\alpha$-random walks. By the second-moment method, we will prove Theorem \ref{thm5} immediately after calculating the expectation and the variance. The proof depends on the following lemma.
\begin{lemma}\label{lem3}
Suppose by $\beta_n\ll 1$ a sequence satisfying \eqref{eqbetan}. Let $\bm{h_1}=\{h^1_1,h^1_2,\dots,h^1_u\}$ and $\bm{h_2}=\{h^2_1,h^2_2,\dots,h^2_w\}$ be two given vectors with $h^1_i\in(-Ax,\alpha x'/2)$ and $h^2_j\in(-Ax,(1-\alpha) x'/2)$. Denoting by
\begin{align*}
\Xi_3(x)=&\sum_{x'\le n \le x}\beta_n\sum_{r\mid n}\mu(r)\Omega(r,n)
\end{align*}
with
\begin{align*}
\Omega(r,n)=\mathop{\sum\nolimits^*}_{a_1,\dots, a_{u}\in \mA^1_x} \mathop{\sum\nolimits^*}_{b_1,\dots, b_{w},\in \mA^2_x}\mu_{\mG_1}(\Pi\bm{a}) \mu_{\mG_2}(\Pi\bm{b}) \sum_{\substack{l\equiv0 (\bmod r) \\ l\equiv \bm{h_1} (\bmod \bm{a})\\ l\equiv n-\bm{h_2} (\bmod \bm{b})}} \binom nl \alpha^l (1-\alpha)^{n-l},
\end{align*}
we have
\[
\Xi_3(x)=c_1c_2c_3\sum_{x'\le n\le x}\beta_n+O\left(x^{1-\frac1{2(u+w)}(1-\theta)+\ve}\right).
\]
\end{lemma}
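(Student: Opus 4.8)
The plan is to prove Lemma \ref{lem3} by merging the two devices already developed in the excerpt. The quantity $\Xi_3(x)$ is the random-walk analogue of $|\mV^3(x)|$ from Theorem \ref{thm4}: the factor $\sum_{r\mid n}\mu(r)$ together with the condition $l\equiv0\pmod r$ encodes the visibility constraint $(l,n-l)=(l,n)=1$, while the inner starred sums over $\bm a,\bm b$ encode the two sieves $\mV_1,\mV_2$ acting on $l$ and on $n-l$. I therefore expect the main term $c_1c_2c_3\sum_{x'\le n\le x}\beta_n$, obtained by combining the common-factor extraction and $\beta_n$-equidistribution argument of Lemma \ref{lem2} with the visibility computation of Theorem \ref{thm4} that produces $c_3$.

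First I would prepare the inner $l$-sum for Theorem \ref{thmaverageB}, whose hypotheses demand coprime moduli. Three families occur: the squarefree $r$ (with $r\mid n$ and the fixed residue $l\equiv0\pmod r$), the coordinates of $\bm a\in\mA^1_x$, and the coordinates of $\bm b\in\mA^2_x$. Following Lemma \ref{lem2} I would separate the overlap $d=(\Pi\bm a,\Pi\bm b)$, and following Theorem \ref{thm4} I would separate the overlaps $r_1=(r,\Pi\bm a)$ and $r_2=(r,\Pi\bm b)$, removing $\rho_1((a_i,\cdot))$ from each $a_i$ and $\rho_2((b_j,\cdot))$ from each $b_j$. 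This leaves primitive parts $a_i',b_j'$ that are pairwise coprime and coprime to $r$, and it packages all the non-coprime data, together with $r$ itself, into a single auxiliary modulus carrying a residue $\nu$ fixed by the Chinese Remainder Theorem from the $h$'s and from $l\equiv0\pmod r$. Because $r\mid n$ and every overlap is $\tau$-bounded, the set of admissible auxiliary moduli has size $\ll n^\ve$, so Theorem \ref{thmaverageB} applies with the $u+w$ primitive moduli and replaces the inner binomial sum by its density, at the advertised cost $O(x^{1-\frac1{2(u+w)}(1-\theta)+\ve})$.

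Next I would evaluate the main term. The sums over the primitive $a_i',b_j'$ rebuild, via Euler products over $\mG_1,\mG_2$ restricted to $g$ coprime to $r$, the constants $c_1,c_2$; the combinatorics of assigning the overlaps $r_1,r_2$ to coordinates supplies the $\kappa_i$- and $\omega_i$-weights, so that the $r$-dependent factor is exactly $f_1(r)f_2(r)$ as in \eqref{eqfi}, and the progression $l\equiv0\pmod r$ contributes a factor $1/r$ to the density (mirroring $\lambda_1=c_1xf_1(r)/r$ in Theorem \ref{thm4}). I would then reverse the order of the $r$-sum, and of the $\bm a,\bm b$-overlap sum, with the $n$-sum: grouping $n\equiv0\pmod r$ and invoking \eqref{eqbetan} gives $\sum_{n\equiv0\,(\bmod r)}\beta_n=\frac1r\sum_n\beta_n+O(1)$, producing a second factor $1/r$. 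The surviving main term is $c_1c_2\big(\sum_r\frac{\mu(r)}{r^2}f_1(r)f_2(r)\big)\sum_{x'\le n\le x}\beta_n=c_1c_2c_3\sum_n\beta_n$ by \eqref{eqc3}. Removing the truncation $r\le n$ by extending the $r$-sum to infinity costs $O(x^\ve)$, and the cumulative $O(1)$ errors from \eqref{eqbetan} — over the overlap modulus (bounded via Lemma \ref{lemgamma}, as in Lemma \ref{lem2}) and over the divisors $r$ (bounded via the absolute convergence of $c_3$ and Lemma \ref{lemsumtau}) — are likewise $O(x^\ve)$ and absorbed into the stated error term.

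I expect the main obstacle to be the triple-overlap bookkeeping in the extraction step. Since a single prime may simultaneously divide $r$, some $a_i$, and some $b_j$, I must verify that after separation the primitive moduli are genuinely pairwise coprime and coprime to the whole auxiliary modulus, that the residue $\nu$ is well defined and pins $n$ to one class modulo that modulus, and that the assignment multiplicities $\kappa_i,\omega_i$ are counted consistently with both the $\bm a,\bm b$-overlap of Lemma \ref{lem2} and the $r$-overlap of Theorem \ref{thm4}. Once this combinatorial accounting is arranged so that Theorem \ref{thmaverageB} applies with a $\mD_n$ of size $\ll n^\ve$, the remaining estimates run in close parallel to Lemma \ref{lem2} and Theorem \ref{thm4}.
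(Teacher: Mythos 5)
Your proposal matches the paper's proof in all essentials: the same extraction of the overlaps $r_1=(r,\Pi\bm a)$, $r_2=(r,\Pi\bm b)$ and $d$, the same passage to primitive coprime parts $a_i',b_j'$ with the assignment multiplicities $\kappa_i$ and $\nu_g$, the same application of Theorem \ref{thmaverageB} with $rd$ playing the role of the auxiliary modulus of size $\ll n^\ve$, and the same swap of the $r$- and $d$-sums with the $n$-sum via \eqref{eqbetan} and Lemma \ref{lemgamma} to produce $c_1c_2c_3\sum\beta_n$. No substantive differences to report.
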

\begin{proof}
This proof is based on Theorem \ref{thmaverageB}, and we should first separate $\Pi\bm{a}$ and $\Pi\bm{b}$ into several coprime parts to meet the coprime condition in the theorem.
With
\[
r_1=(r,\Pi\bm{a}),\ \ \ \ \ r_2=(r,\Pi\bm{b}),\ \ \ \ \ d=\Bigg(\frac{\Pi\bm{a}}{\rho_1(r_1)},\frac{\Pi\bm{b}}{\rho_2(r_2)}\Bigg),
\]
we separate the variables by writing
\[
a'_{i}=\frac{a_{i}}{(a_{i},\rho_1(dr_1))},\ \ \ \ \  b'_{j}=\frac{b_{j}}{(b_{j},\rho_2(dr_2))}.
\]
Let $\mD_n$ be the set of such $d$. A similar discussion as in the proof of Lemma \ref{lem2} shows that
\[
\left|\mD_n\right|\ll n^\ve.
\]
As discussed before, for a given $r_1$, the number of ways to assign it to possible coordinates of $\bm{a}$ is $\prod_{g\mid \rho_1(r_1)}\kappa_1((g,r_1))$, where $\kappa_i((g,r_i))$ is the number of such $h^i_t\in\mH_i$ that $h^i_t\equiv0 \pmod {(g,r_i)}$.
A similar phenomenon holds for $r_2$. Also, for a given $d\in \mD_n$, the number of ways to assign $d$ to possible coordinates of $\bm{a}$ is $\prod_{g\mid \rho_1(d)}\nu_g(\mH_1)$, and the number of ways to assign $d$ to possible coordinates of $\bm{b}$ is $\prod_{g\mid \rho_2(d)}\nu_g(\mH_2)$.
Thus, we alter the order of the summations to see
\begin{align*}
\Omega(r,n)=&\sum_{r_1\mid r}\Bigg(\prod_{g\mid g_1(r_1)}\left(-\kappa_1((g,r_1))\right)\Bigg)\sum_{r_2\mid r}\Bigg(\prod_{g\mid g_2(r_2)}\left(-\kappa_2((g,r_2))\right)\Bigg)\\
&\times\sum_{d\in\mD_n}\Bigg(\prod_{g\mid \rho_1(d)}\nu_g(\mH_1)\Bigg)\Bigg(\prod_{g\mid \rho_2(d)}\nu_g(\mH_2)\Bigg)\Omega'(r,n),
\end{align*}
where
\begin{align*}
\Omega'(r,n)=\mathop{\mathop{\sum\nolimits^*}_{a'_1,\dots, a'_{u}\in \mA^1_x}\mathop{\sum\nolimits^*}_{b'_1,\dots, b'_{w}\in \mA^2_x}}_{(rd, \Pi\bm{a}'\Pi\bm{b}')=(\Pi\bm{a}', \Pi\bm{b}')=1}\mu_{\mG_1}(\Pi\bm{a}') \mu_{\mG_2}(\Pi\bm{b}') \sum_{\substack{l\equiv0 (\bmod r) \\ l\equiv \bm{h_1} (\bmod \bm{a})\\ l\equiv n-\bm{h_2} (\bmod \bm{b})}} \binom nl \alpha^l (1-\alpha)^{n-l}.
\end{align*}

If regards $rd$ as $d$ in Theorem \ref{thmaverageB}, one notes that the number of possible values of $rd$ is $\ll n^\ve$, and the expression of $\Xi_3(x)$ now is ready to apply the theorem. After applying Theorem \ref{thmaverageB}, we have an analogue of \eqref{eqdAA'} that
\begin{align}\label{eqXi3}
 \Xi_3(x)=&\sum_{x'\le n \le x}\beta_n\sum_{r\mid n}\frac{\mu(r)}{r}\sum_{r_1\mid r}\omega_1(r_1) \sum_{r_2\mid r}\omega_2(r_2)\sum_{d\in\mD_n}d \gammaup_1(d)\gammaup_2(d)\\
&\times\mathop{\mathop{\sum\nolimits^*}_{a'_1,\dots, a'_{u}\in \mA^1}\mathop{\sum\nolimits^*}_{b'_1,\dots, b'_{w}\in \mA^2}}_{(r, \Pi\bm{a}'\Pi\bm{b}')=(\Pi\bm{a}', \Pi\bm{b}')=1}\frac{\mu_{\mG_1}(\Pi\bm{a}')}{\Pi\bm{a}'}\frac{\mu_{\mG_2}(\Pi\bm{b}')}{\Pi\bm{b}'}   +O\left(x^{1-\frac1{2(u+w)}(1-\theta)+\ve}\right),\notag
\end{align}
where $\omega_i(r_i)$ is defined as in \eqref{eqomega}.
A similar discussion as in the proof of Lemma \ref{lem2} shows that we may alter the order of the summations over $d~,r$ and $n$, and then it follows by the Chinese Remainder Theorem that
\begin{align*}
&\sum_{x'\le n \le x}\beta_n\sum_{r\mid n}\frac{\mu(r)}{r}\sum_{r_1\mid r}\omega_1(r_1) \sum_{r_2\mid r}\omega_2(r_2)\sum_{d\in\mD_n}d \gammaup_1(d)\gammaup_2(d)\\
&=\sum_{r\le x}\frac{\mu(r)}{r}\sum_{r_1\mid r}\omega_1(r_1)\sum_{r_2\mid r}\omega_2(r_2)\sum_{d}d \gammaup_1(d)\gammaup_2(d)\Bigg(\frac{1}{dr}\sum_{x'\le n\le x}\beta_n+O(1)\Bigg).
\end{align*}
For $\omega_i(r_i)\le 1$, an direct estimate with Lemma \ref{lemgamma} shows that the contribution of the error term to $\Xi_3(x)$ is $O(1)$.

 With this in \eqref{eqXi3}, we recover $a_1\cdots a_{u}=\rho_1(d) a'_1\cdots a'_{u}$ as well as $b_1\cdots b_{w}=\rho_2(d) b'_1\cdots b'_{w}$ in the sum to remove the coprime condition $(\Pi\bm{a}',\Pi\bm{b}')=1$, and it follows that
\begin{align*}
 \Xi_3(x)=&\sum_{r}\frac{\mu(r)}{r^2}\sum_{r_1\mid r}\omega_1(r_1)\sum_{r_2\mid r}\omega_2(r_2)\\
&\times\mathop{\mathop{\sum\nolimits^*}_{a_1,\dots, a_{u}\in \mA^1}\mathop{\sum\nolimits^*}_{b_1,\dots, b_{w}\in \mA^2}}_{(r, \Pi\bm{a}\Pi\bm{b})=1}\frac{\mu_{\mG_1}(\Pi\bm{a})}{\Pi\bm{a}}\frac{\mu_{\mG_2}(\Pi\bm{b})}{\Pi\bm{b}}\sum_{x'\le n\le x}\beta_n   +O\left(x^{1-\frac1{2(u+w)}(1-\theta)+\ve}\right)\notag\\
=&c_1c_2c_3\sum_{x'\le n\le x}\beta_n +O\left(x^{1-\frac1{2(u+w)}(1-\theta)+\ve}\right),
\end{align*}
establishing the lemma.
\end{proof}

With Lemma \ref{lem3} in place of Lemma \ref{lem2}, identical treatments as in the proof of Proposition \ref{prolat1}, \ref{prolat2} provide the following analogues of the expectation and the variance.
\begin{proposition}
Let $\mV^3$ be a set of visible lattice points given by \eqref{eqv2}, and let $P_i$ be an $\alpha$-random walk as in \eqref{eq2rw}.
We have, for any $\alpha\in(0,1)$, that
\[
\mathbb{E} \left(\overline{S}_n(\mV^3)\right)= c_1c_2c_3+O\left(n^{-\frac1{2(u+w)}(1-\theta)+\ve}\right),
\]
where $c_1$ and $c_2$ are constants given by \eqref{eqc1c2}, and $c_3$ is the constant as in \eqref{eqc3}.
\end{proposition}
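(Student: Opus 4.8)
The plan is to mirror the proof of Proposition~\ref{prolat1}, replacing the single application of Lemma~\ref{lem2} by one of Lemma~\ref{lem3}, the extra modulus in the latter being exactly what encodes the visibility constraint. After $i$ steps the $\alpha$-random walker of \eqref{eq2rw} occupies the point $(l,i-l)$, where $l$ counts the rightward moves, with probability $\binom il \alpha^l (1-\alpha)^{i-l}$; hence
\begin{align*}
\mathbb{E}(X_i)=\sum_{0\le l\le i}\binom il \alpha^l (1-\alpha)^{i-l}\,1_{\mV_1}(l)\,1_{\mV_2}(i-l)\,1_{(l,i-l)=1}.
\end{align*}
As in the proof of Proposition~\ref{proposition1}, the terms with $i<n^{1/2}$ contribute only $O(n^{-1/2})$ to $\mathbb{E}(\overline{S}_n)$, and, by the rapid decay of the binomial \emph{pmf} in Lemma~\ref{lemBio}, the finitely many terms with $l\in\mH_1$ or $i-l\in\mH_2$ may be discarded at negligible cost.

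First I would remove the coprimality condition by M\"obius inversion. Since $(l,i-l)=(l,i)$, Lemma~\ref{lemmobius} gives $1_{(l,i-l)=1}=\sum_{r\mid(l,i)}\mu(r)$, so the constraint splits into the outer divisibility $r\mid i$ and the inner congruence $l\equiv0\pmod r$; these are precisely the outer $r\mid n$ and the inner $l\equiv0\pmod r$ appearing in $\Xi_3$ and $\Omega(r,n)$ of Lemma~\ref{lem3}. Next I would expand $1_{\mV_1}(l)$ and $1_{\mV_2}(i-l)$ via Lemma~\ref{lem1V}, rewriting $i-l\equiv\bm{h_2}\pmod{\bm{b}}$ as $l\equiv i-\bm{h_2}\pmod{\bm{b}}$. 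Collecting all of this, $\mathbb{E}(\overline{S}_n)$ becomes $n^{-1}$ times exactly the quantity $\Xi_3(n)$ of Lemma~\ref{lem3}, taken with $x=n$, $x'=n^{1/2}$, and the constant sequence $\beta_i\equiv1$ (which trivially obeys \eqref{eqbetan}), the summation variable there playing the role of the step count $i$. Note that, as $\mH_1$ and $\mH_2$ are fixed, the hypotheses $h^1_i\in(-An,\alpha n^{1/2}/2)$ and $h^2_j\in(-An,(1-\alpha)n^{1/2}/2)$ of Lemma~\ref{lem3} hold for all large $n$.

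It then remains to invoke Lemma~\ref{lem3}, which yields
\begin{align*}
\Xi_3(n)&=c_1c_2c_3\sum_{n^{1/2}\le i\le n}1+O\left(n^{1-\frac1{2(u+w)}(1-\theta)+\ve}\right)\\
&=c_1c_2c_3\,n+O\left(n^{1-\frac1{2(u+w)}(1-\theta)+\ve}\right).
\end{align*}
Dividing by $n$ and absorbing the $O(n^{-1/2})$ loss from the truncation produces the asserted expectation. Everything here is routine given Lemma~\ref{lem3}; the one point requiring care is the bookkeeping of the previous paragraph, namely recognizing that after M\"obius inversion the visibility condition $(m,n)=1$ dovetails with the built-in modulus $r$ of $\Omega(r,n)$ so that the whole expectation collapses onto a single $\Xi_3$. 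I expect no genuine obstacle beyond this identification, since the analytic work has already been carried out in Lemma~\ref{lem3}.
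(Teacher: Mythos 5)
Your proposal is correct and follows exactly the route the paper intends: the paper's own ``proof'' is the one-line remark that the argument of Proposition~\ref{prolat1} goes through with Lemma~\ref{lem3} in place of Lemma~\ref{lem2}, and your write-up supplies precisely the missing bookkeeping (the M\"obius expansion of $1_{(l,i-l)=1}$ via $(l,i-l)=(l,i)$, matching the resulting $r\mid i$, $l\equiv 0\ (\bmod\ r)$ structure to $\Xi_3$, and taking $\beta_i\equiv 1$). No gaps.
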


\begin{proposition}
Let $\mV^3$ be a set of visible lattice points given by \eqref{eqv2}, and let $P_i$ be an $\alpha$-random walk as in \eqref{eq2rw}.
We have, for any $\alpha\in(0,1)$, that
\[
\mathbb{V} \left(\overline{S}_n(\mV^3)\right)= O\left(n^{-\frac1{2(u+w)}(1-\theta)+\ve}\right).
\]
\end{proposition}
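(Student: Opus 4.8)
The plan is to run the second-moment argument of Proposition \ref{prolat2} essentially verbatim, with every appeal to Lemma \ref{lem2} replaced by the corresponding appeal to Lemma \ref{lem3}, which now transports the visibility condition through its extra Möbius factor. Writing $\mathbb{V}(\ol{S}_n)=\mathbb{E}(\ol{S}_n^2)-\mathbb{E}(\ol{S}_n)^2$, the preceding proposition gives $\mathbb{E}(\ol{S}_n)^2=(c_1c_2c_3)^2+O(n^{-\frac1{2(u+w)}(1-\theta)+\ve})$, so it suffices to prove the matching asymptotic $\mathbb{E}(\ol{S}_n^2)=(c_1c_2c_3)^2+O(n^{-\frac1{2(u+w)}(1-\theta)+\ve})$.

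First I would expand $\mathbb{E}(\ol{S}_n^2)$ as in Proposition \ref{prolat2}, keeping only the main range $n^{1/2}\le i\le n-n^\ve$, $n^\ve\le j-i\le n-i$ at a cost of $O(n^{-1/2})$. With $P_i=(s,i-s)$ and $P_j=(s+r,j-s-r)$, the joint event $P_i,P_j\in\mV^3$ reads $s\in\mV_1,\ i-s\in\mV_2,\ (s,i)=1$ together with $s+r\in\mV_1,\ j-s-r\in\mV_2,\ (s+r,j)=1$. I would expand the four sieved indicators by Lemma \ref{lem1V} and encode the two coprimality conditions by the Möbius identity of Lemma \ref{lemmobius}, so that the inner sum over $r$ and the outer sum over $s$ each acquire the shape of the quantity $\Omega(r,n)$ of Lemma \ref{lem3}.

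The delicate point is the inner sum over the increment $r$, whose binomial length is $n'=j-i$. Its visibility is $(s+r,j)=1$, so after the Möbius expansion the modulus $\rho$ runs over divisors of $j=i+n'$ with $r\equiv-s\pmod\rho$, rather than over divisors of $n'$ with residue $0$ as in $\Omega(r,n)$. For fixed $s,i$ the residue $-s$ is fixed and the number of such $\rho$ is $\ll n^\ve$, so these data still meet the hypotheses of Theorem \ref{thmaverageB} with $n'$ as its binomial length, and the argument of Lemma \ref{lem3} goes through with the shifted sieve vectors $\bm{h_1}-s$ and $(j-s)-\bm{h_2}$. One must also check that the emerging constant is again $c_1c_2c_3$ and not an $s$-dependent translate: in the absolute coordinates $x_j=s+r$ and $y_j=j-s-r$ the visibility residue is $0$ while the $\mV_1$- and $\mV_2$-residues are $\bm{h_1}$ and $\bm{h_2}$, exactly as in $\Omega$, so the counts $\kappa_1((g,\rho)),\kappa_2((g,\rho))$ entering $f_1,f_2$ in \eqref{eqfi} are unaffected and the $c_3$ of \eqref{eqc3} reappears intact. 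Summing over $j$ with weight $\beta\equiv1$, Lemma \ref{lem3} evaluates the inner sum as $c_1c_2c_3\,(n-i)+O\bigl((n-i)^{1-\frac1{2(u+w)}(1-\theta)+\ve}\bigr)$, uniformly in $i$ and $s$.

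Because this main term is independent of $s$, feeding it back gives $\mathbb{E}(\ol{S}_n^2)=\frac{2c_1c_2c_3}{n^2}\sum_i(n-i)\,\Sigma(i)+O(\cdots)$, where $\Sigma(i)$ denotes the remaining outer sieve-and-visibility sum at time $i$. This $\Sigma(i)$ is already in the standard form of Lemma \ref{lem3} (binomial length $i$, visibility $(s,i)=1$, shifts $\bm{h_1}$ and $i-\bm{h_2}$); as in Proposition \ref{prolat2} the weight must obey \eqref{eqbetan}, so I would take $\beta_i=1-n^{-1+\ve}-\tfrac{i}{n}$ to absorb the factor $2(n-i)/n^2$ and the lower truncation of the inner range. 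A second application of Lemma \ref{lem3} then yields $\frac{2c_1c_2c_3}{n}\,c_1c_2c_3\sum_i\beta_i+O(\cdots)=(c_1c_2c_3)^2+O(n^{-\frac1{2(u+w)}(1-\theta)+\ve})$ since $\sum_i\beta_i\sim n/2$. Subtracting $\mathbb{E}(\ol{S}_n)^2$ cancels the main terms and leaves the stated bound. The main obstacle is precisely the reindexing of the inner visibility just described: confirming that the off-centre condition $(s+r,j)=1$ remains within the reach of Lemma \ref{lem3} and still produces the constant $c_3$ rather than an $s$-shifted variant.
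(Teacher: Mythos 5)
Your proposal is correct and follows essentially the same route as the paper, which proves this proposition by repeating the argument of Proposition \ref{prolat2} verbatim with Lemma \ref{lem3} in place of Lemma \ref{lem2}, including the same choice of weight $\beta_i=1-n^{-1+\ve}-\tfrac{i}{n}$ for the outer application. Your extra care about the inner visibility condition $(s+r,j)=1$ --- checking that the moduli dividing $j$ with residue $-s$ still fit the $\mD_n$, $\nu$ framework of Theorem \ref{thmaverageB} and still produce the constant $c_3$ --- is a detail the paper glosses over but is handled correctly here.
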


\begin{proof}[Proof of Theorem \ref{thm5}]
With the expectation and the variance above, Theorem \ref{thm5} follows immediately by Lemma \ref{lemsmm}, the second-moment method.
\end{proof}


\begin{thebibliography}{99}
\bibitem{AG09} S. D. Adhikari and A. Granville,
Visibility in the plane,
J. Number Theory, 129 (2009), 2335--2345.

\bibitem{Ap76} Tom M. Apostol,
Introduction to analytic number theory,
Springer-Verlag, New York, 1976.

\bibitem{BH15} M. Baake and C. Huck,
Ergodic properties of visible lattice points,
Proc. Steklov Inst. Math., 288 (2015), 165--188.

\bibitem{BMP00} M. Baake, R. V. Moody, and P. Pleasants,
Diffraction from visible lattice points and kth power free integers,
Discrete Math., 221 (2000), 3--42.

%

\bibitem{BCZ00} F. P. Boca, C. Cobeli, and A. Zaharescu,
Distribution of lattice points visible from the origin,
Commun. Math. Phys., 213 (2000), 433--470.

\bibitem{Car32} L. Carlitz,
On a problem in additive arithmetic,
Quart. J. Math. Oxford Ser., 3 (1932), 273--290.




\bibitem{CC03} Y.-G. Chen and L.-F. Cheng,
Visibility of lattice points,
Acta Arith., 107 (2003), 203--207.

\bibitem{CFF19} J. Cilleruelo, J. L. Fern\'andez, and P. Fern\'andez,
Visible lattice points in random walks,
European J. Combin., 75 (2019), 92--112.


\bibitem{DR10} R. Durrett,
Probability. Theory and Examples, fourth ed.,
in: Cambridge Series in Statistical and Probabilistic Mathematics, Cambridge University Press, 2010.





\bibitem{HB84} D. R. Heath-Brown,
The square sieve and consecutive square-free numbers,
Math. Ann., 266 (1984), 251--259.






\bibitem{Jou10} F. Jouve,
The large sieve and random walks on left cosets of arithmetic groups,
Comment. Math. Helv. 85 (2010), no. 3, 647--704.



\bibitem{LW09} M. Lifshits and M. Weber,
Sampling the Lindel\"of hypothesis with the Cauchy random walk,
Proc. Lond. Math. Soc., 98 (2009), 241--270.

\bibitem{LM20} K. Liu and X. Meng,
Visible lattice points along curves,
Ramanujan J., 56 (2020), 1073--1086.

\bibitem{Mcn17} N. McNew,
Random multiplicative walks on the residues modulo $n$,
Mathematika 63 (2017), no. 2, 602--621.




\bibitem{Sri15}T. Srichan,
Sampling the Lindel\"of hypothesis for Dirichlet L-functions by the Cauchy random walk,
Eur. J. Math., 1 (2015) 351--366.



%

\end{thebibliography}
\end{document}